\providecommand{\U}[1]{\protect\rule{.1in}{.1in}}
\newtheorem{theorem}{Theorem}
\newtheorem{corollary}[theorem]{Corollary}
\newtheorem{example}[theorem]{Example}
\newtheorem{lemma}[theorem]{Lemma}
\newtheorem{notation}[theorem]{Notation}
\newtheorem{proposition}[theorem]{Proposition}
\newtheorem{remark}[theorem]{Remark}
\newenvironment{proof}[1][Proof]{\noindent\textit{#1.} }{}
\begin{document}

\title{Spin actions and Polygon spaces}
\author{Eunjeong Lee and Jae-Hyouk Lee}
\maketitle

\begin{abstract}
In this article, we construct correspondences between polygon spaces in
Euclidean spaces of dimension $2,3,5,9\ $and the quotient spaces of
$2$-Steifel manifolds along the normed division algebra$\ \mathbb{F}$ real
$\mathbb{R}$, complex $\mathbb{C}$, quaternions $\mathbb{H}$, octonions
$\mathbb{O}$. For the purpose, we introduce Hopf map on $\mathbb{F}^{2}\ $and
consider the spin action of $SU\left(  2,\mathbb{F}\right)  $ to spinor
$\mathbb{F}^{2}\ $and the induced $SO\ $action to the Euclidean space
$\mathbb{R\oplus F}$. The correspondences are extension of the work of
Hausmann and Knutson for polygon spaces of dimension $2,3\ $and $2$%
-Grassmannians over real and complex.

\end{abstract}

\section{Introduction}

Polygon spaces are moduli spaces of closed linkages formed by $k$ vectors in
$n$-dimensional Euclidean space, gathered up to similarity transformations and
rotation. There is a discussion on linkage by Thurston and Weeks \cite{TW} and
there has been a growing interest in polygon spaces. This interest intersects
with various fields such as Hamiltonian geometry \cite{HK}, \cite{KM}
mathematical robotics \cite{Fa}, \cite{MT}, and statistical shape theory
\cite{HR}. Also, there have been its applications to physics and chemistry as
the polygon space provides a model for the moduli spaces of constraint
Brownian motions and polymers.

In particular, Hausmann and Knutson \cite{HK} established the foundation for
comprehending the topology and geometry of polygon spaces. A noteworthy aspect
is that fascinating results emerges in the three-dimensional case. They found
out that polygon space over three dimensions closely related to complex
Grassmannians. The complex Grassmannian, renowned for its rich geometric
structures, has become a subject of widespread investigation and interest. It
helps them to figure out the K\"{a}hler structures on polygon space with fixed
side-length using the moment map for the torus action on the Grassmannian.
Additionally, they establishes a link between the bending flows and the
Gel'fand-Cetlin system on the Grassmannian. This connection is then leveraged
to compute the structures of quadrilateral, pentagon, and hexagon spaces. From
those glorious results, extensive research has been conducted on
three-dimensional polygon spaces. Subsequently, there has been further
exploration into higher-dimensional polygon spaces. For instance, Foth and
Lozano \cite{FL} studied five-dimensional polygon spaces, establishing a
connection with the GIT quotient of quaternionic projective lines through the
diagonal action of $SL(2,\mathbb{H})$.

In this paper, we extend the construction of polygon spaces for $2$- and
$3$-dimensional euclidean spaces as real$\mathbb{\ }$and complex Grassmannians
in \cite{HK} to the polygon spaces in $5$- and $9$-dimensional Euclidean
spaces along the quaternions $\mathbb{H\ }$and octonions $\mathbb{O}$.

In \cite{HK}, Hausmann and Knutson consider $2$- and $3$-dimensional euclidean
spaces as $\mathbb{C}$ and $\operatorname{Im}\mathbb{H}$. In particular,
$\operatorname{Im}\mathbb{H\ }$appears along the Hopf map $\mathbb{H}%
\rightarrow\operatorname{Im}\mathbb{H}$ so that the $U\left(  2\right)
\ $action to $\mathbb{H}$ (as $\mathbb{C}^{2}$) induces $SO\left(  3\right)
\ $action to $\operatorname{Im}\mathbb{H~}$(as $\mathbb{R}^{3}$). Therefrom,
it is obtained the correspondence of the complex $2$-grassmannian and the
polygon space over $\mathbb{R}^{3}$. Moreover, the octonionic version of this
is proposed in \cite{HK}. Here, we observe that $U\left(  2\right)  \ $action
to $\mathbb{C}^{2}\ $is rather $Spin\left(  3\right)  \simeq SU\left(
2\right)  \ $action to spinor $\mathbb{C}^{2}$. Moreover this special
isomorphism of spin group is a case of the followings%

\[%
\begin{tabular}
[c]{|c||c|c|c|c|}\hline
$\mathbb{F}$ & $\mathbb{R}$ & $\mathbb{C}$ & $\mathbb{H}$ & $\mathbb{O}%
$\\\hline
$SU(2,\mathbb{F})$ & $SO(2)\simeq Spin\left(  2\right)  $ & $SU(2)\simeq
Spin\left(  3\right)  $ & $Sp(2)\simeq Spin\left(  5\right)  $ & $Spin\left(
9\right)  $\\\hline
Spinors & $\mathbb{R}^{2}$ & $\mathbb{C}^{2}$ & $\mathbb{H}^{2}$ &
$\mathbb{O}^{2}$\\\hline
\end{tabular}
\]
where $Spin\left(  9\right)  \ $is denoted by $SU(2,\mathbb{O})\mathbb{\ }%
$along this table. From our approach, the results in \cite{HK} are better
understood as $\mathbb{R}$ and $\mathbb{C\ }$rather than $\mathbb{C}$ and
$\mathbb{H}$. Thus it is natural that the extensions are considered for
$\mathbb{H}$ and $\mathbb{O}$ in the sense of normed division algebra.

First of all, we introduce new Hopf maps $\Phi\ $and identification $\pi\ $for
$\mathbb{F}=\mathbb{R},\mathbb{C},\mathbb{H}$
\[%
\begin{array}
[c]{cccccc}%
\pi\circ\Phi: & \mathbb{F}^{2} & \rightarrow & \mathcal{H}_{2}^{0}%
(\mathbb{F}) & \rightarrow & \mathbb{R}\oplus\mathbb{F}\\
&
\begin{pmatrix}
x\\
y
\end{pmatrix}
& \mapsto &
\begin{pmatrix}
\frac{|x|^{2}-|y|^{2}}{2} & x\overline{y}\\
y\overline{x} & \frac{|y|^{2}-|x|^{2}}{2}%
\end{pmatrix}
& \mapsto & \left(  \frac{|x|^{2}-|y|^{2}}{2},x\overline{y}\right)
\end{array}
\]
where $\mathcal{H}_{2}^{0}(\mathbb{F})$ is the space of $2\times2$ Hermitian
traceless matrices over $\mathbb{F}$. By Lemma \ref{surjection}, the Hopf maps
$\Phi\ $are surjective, and the generic fibers of $\Phi$ are
\[
\mathbb{F}(1):=\{c\in\mathbb{F}:|c|=1\}
\]
which are compatible to the (right)$\ \mathbb{F}$-action to $\mathbb{F}^{2}$.
We also introduce Hopf map $\Phi_{\mathbb{O}}$ and obtain the similar results
(Lemma\ref{OctO1SuProp}, Lemma \ref{OctonionFiberLemma}). Remark that since
$\mathbb{O}$ is not associative, $\mathbb{O}(1)\ $is not a group but a Moufang
loop so that we need to modify the right $\mathbb{O}$-multiplication to
$\mathbb{O}^{2}$ to our purpose.

For $\mathbb{F}=\mathbb{R},\mathbb{C},\mathbb{H}$, the spin action of
$SU(2,\mathbb{F})\ $to $\mathbb{F}^{2}$ induces $SO\left(  1+\dim
\mathbb{F}\right)  $ action on $\mathbb{R}\oplus\mathbb{F\ }$via $\pi\circ
\Phi\ $by Proposition \ref{SORCH}. Similarly, we obtain the spin action of
$SU(2,\mathbb{O})\simeq Spin\left(  9\right)  \ $inducing $SO\left(
1+\dim\mathbb{O}\right)  $ action on $\mathbb{R}\oplus\mathbb{O}$. Here again
since $\mathbb{O}$ is not associative, we need another argument applying the
generators $\mathbb{R}\oplus\mathbb{O}\left(  1\right)  \ $of $Spin\left(
9\right)  \ $(subsection \ref{subsectionOandGroupaction}).

Finally, we apply these for $2$-Stiefel manifold $V_{\mathbb{F}}(2,k)$ as the
set of $2\times k$ matrices $X$ over $\mathbb{F}$ such that $XX^{\ast}=I_{2}$.
This leads to the commutative diagram for $2$-Stiefel manifold $V_{\mathbb{F}%
}(2,k)$ and polygon spaces $\widetilde{\mathcal{P}}_{k}(\mathbb{R}%
^{1+dim\mathbb{F}})$
\[%
\begin{matrix}
V_{\mathbb{F}}(2,k)/\mathbb{F}(1)^{k} & \xrightarrow{ \widetilde{\Phi}^k} &
\mathcal{M}_{k}(\mathbb{R}^{1+dim\mathbb{F}})\\
&  & \\
SU(2,\mathbb{F})\ \ \big\downarrow &  & \big\downarrow\ \ SO(1+dim\mathbb{F}%
)\\
&  & \\
SU(2,\mathbb{F})\backslash V_{\mathbb{F}}(2,k)/\mathbb{F}(1)^{k} &
\xrightarrow{\simeq} & \widetilde{\mathcal{P}}_{k}(\mathbb{R}^{1+dim\mathbb{F}%
})
\end{matrix}
.
\]
Therefrom, we obtain the following identifications%
\begin{align*}
\widetilde{\mathcal{P}}_{k}(\mathbb{R}^{2})  &  \simeq\widetilde
{Gr}_{\mathbb{R}}(2,k)/O(1)^{k},\\
\widetilde{\mathcal{P}}_{k}(\mathbb{R}^{3})  &  \simeq SU(2)\backslash
V_{\mathbb{C}}(2,k)/U(1)^{k},\\
\widetilde{\mathcal{P}}_{k}(\mathbb{R}^{5})  &  \simeq Gr_{\mathbb{H}%
}(2,k)/\mathbb{H}(1)^{k},\\
\widetilde{\mathcal{P}}_{k}(\mathbb{R}^{9})  &  \simeq SU(2,\mathbb{O}%
)\backslash V_{\mathbb{O}}(2,k)/\mathbb{O}(1)^{k}.
\end{align*}
Remark that the fundamental properties of $\mathbb{O\ }$are also presented in
detail(subsection \ref{SubsectionOct}) as this article involves various
calculations of $\mathbb{O\ }$relied on them.

\bigskip

\section{Preliminaries}

\subsection{Polygon spaces}

Polygons in this article are closed figures in a finite-dimensional Euclidean
space $(V,\langle,\rangle)$, formed by connecting more than equal to three
vertices with directed straight line segments which are edges of the polygon.
In this article we consider polygons in a finite-dimensional Euclidean space
$(V,\langle,\rangle)\ $as a map whose values are vectors in $V\ $presenting
directed line segments of the polygons. A $k$-gon $p$ is a nonzero map%
\[%
\begin{array}
[c]{cccc}%
p: & \{1,\cdots,k\} & \rightarrow & V\\
& i & \mapsto & p(i)
\end{array}
\]
satisfying closed condition
\[
\sum_{i=1}^{k}p(i)=0.
\]
The definition allows polygons with self-intersection, knotted polygons. The
value of the map $p\ $can be the zero vector in $V$ but this may not occur
simultaneously, and the polygons are possibly degenerate as in \cite{KM2}.

On the set $M_{k}\left(  V\right)  \ $of all $k$-gons in $V$, there are
natural actions of the orthogonal group $O(V)\ $(also $SO(V)$) and the
multiplicative group $(\mathbb{R}^{+},\times)\ $defined respectively by%
\begin{align*}
\left(  g\cdot p\right)  (i)  &  :=g\left(  p(i)\right)  \text{,\ for }g\in
O(V),i\in\{1,\cdots,k\}\\
\left(  c\cdot p\right)  (i)  &  :=cp(i)\text{,\ for }\mathbb{R}^{+}%
,i\in\{1,\cdots,k\}.
\end{align*}

With a choose of an ordered basis of\ a $n$-dimensional vector space $V\ $over
$\mathbb{R}$, polygons $p\ $can be presented as $n\times k\ $matrices\
\[
\left[  p\right]  :=%
\begin{pmatrix}
| &  & |\\
p(1) & \cdots & p\left(  k\right) \\
| &  & |
\end{pmatrix}
\]
where $p\left(  i\right)  $'s are considered as column vectors presented by
the basis of $V$. Moreover the set of $k$-gons $M_{k}\left(  V\right)  \ $is
regarded as the subset of $n\times k\ $matrices $\operatorname{Mat}_{n\times
k}(\mathbb{R})\ $over real numbers.

Let $\mathcal{M}_{k}(V)$ is the quotient space of the set of all $k$-gons in
$M_{k}\left(  V\right)  $ by the $(\mathbb{R}^{+},\times)\ $action, and we
identify $\mathcal{M}_{k}(V)$ as the space of closed polygons in $V$ with
fixed perimeter%
\[
\sum_{i=1}^{k}\left\vert p(i)\right\vert =1
\]
where $\left\vert \ \right\vert \ $is the Euclidean norm of $V$.

The \textbf{polygon spaces} of $k$-gons\ in the Euclidean space $(V,\langle
,\rangle)\ $are defined as the quotient space of $\mathcal{M}_{k}(V)$ by
$SO(V),O(V)$:
\[
\mathcal{P}_{k}(V):=O(V)\backslash\mathcal{M}_{k}(V),\;\mathcal{\tilde{P}}%
_{k}(V):=SO(V)\backslash\mathcal{M}_{k}(V)
\]

\bigskip

\subsection{Stiefel manifold}

For $\mathbb{F}=\mathbb{R},\mathbb{C},\mathbb{H}$, the set
\[
V_{\mathbb{F}}(k,n):=\{X\in\operatorname{Mat}_{k\times n}(\mathbb{F})\mid
XX^{\ast}=I_{k}\}
\]
of ordered orthonormal $k$-frames in $\mathbb{F}^{n}$\ is called the
\textbf{Stiefel manifold}. Here, $X^{\ast}$ denotes the conjugate transpose of
$X$\ .We define conjugation on $\mathbb{F}$ by reversing the sign of the
imaginary part while preserving the real part. For example, for
$q=a+bi+cj+dk\in\mathbb{H}$, $a,b,c,d\in\mathbb{R}$, its conjugation
$\overline{q}$ is%
\[
\overline{q}:=a-bi-cj-dk.
\]
Moreover, we use the standard inner product on $\mathbb{F}^{n}$given by%
\[
\langle x,y\rangle=\sum_{i=1}^{n}x_{i}\overline{y}_{i}%
\]
$\ $for $x=\left(  x_{1},x_{2},...,x_{n}\right)  ,y=\left(  y_{1}%
,y_{2},...,y_{n}\right)  \ $in $\mathbb{F}^{n}$. This inner product is
positive definite with signature $(n,0)\ $and the condition $XX^{\ast}=I_{k}$
ensures that the rows of matrices $X$ in $V_{\mathbb{F}}(k,n)\ $form an
orthonormal set of $k\ $vectors of $\mathbb{F}^{n}$.

\begin{remark}
Since $\mathbb{F}=\mathbb{H\ }$is noncommutative, it is important to fix
$\mathbb{H}^{n}\ $as left or right $\mathbb{H}$-module. From above definition
of norm and $V_{\mathbb{H}}(k,n)$, the natural choice is right $\mathbb{H}%
$-module. It is also matched with matrix multiplication from the left to
column vectors of $\mathbb{H}^{n}$.
\end{remark}

The Stiefel manifold $V_{\mathbb{F}}(k,n)$ is also a homogenous space. Since
$V_{\mathbb{F}}(k,n)$ is the set of all $k$-frames of $\mathbb{F}^{n}$, the
Lie groups of orthogonal matrices for each $\mathbb{F}$
\[
O(n),U(n),Sp(n)
\]
act transitively on $V_{\mathbb{F}}(k,n)$ for $\mathbb{F}=\mathbb{R}%
,\mathbb{C},\mathbb{H}$, respectively. Then the isotropy subgroup of a given
$k$-frame is
\[
O(n-k),U(n-k),Sp(n-k).
\]
Thus, the Stiefel manifolds can be expressed as the homogeneous spaces%
\begin{align*}
V_{\mathbb{R}}(k,n)  &  \simeq O(n)/O(n-k)\\
V_{\mathbb{C}}(k,n)  &  \simeq U(n)/U(n-k)\\
V_{\mathbb{H}}(k,n)  &  \simeq Sp(n)/Sp(n-k).
\end{align*}

Now, the Stiefel manifolds $V_{\mathbb{F}}(k,n)\ $are naturally corresponded
to the \textbf{Grassmannian manifold} $Gr_{\mathbb{F}}(k,n)$, the set of all
$k$-dimensional subspaces of $\mathbb{F}^{n}$. This space also admits a
transitive action of
\[
O(n),U(n),Sp(n),
\]
but its isotropy subgroup is
\[
O(n-k)\times O(k),\quad U(n-k)\times U(k),\quad Sp(n-k)\times Sp(k).
\]
Hence, the Grassmannian can be described as the homogeneous space and can be
obtained as a quotient of the Stiefel manifold.%
\begin{align*}
Gr_{\mathbb{R}}(k,n)  &  \simeq O(n)/\left(  O(n-k)\times O(k)\right)  \simeq
V_{\mathbb{R}}(k,n)/O(k)\\
Gr_{\mathbb{C}}(k,n)  &  \simeq U(n)/\left(  U(n-k)\times U(k)\right)  \simeq
V_{\mathbb{C}}(k,n)/U(k)\\
Gr_{\mathbb{H}}(k,n)  &  \simeq Sp(n)/\left(  Sp(n-k)\times Sp(k)\right)
\simeq V_{\mathbb{H}}(k,n)/Sp(k).
\end{align*}

In addition, for $\mathbb{F}=\mathbb{R}$, a point in the $k$-Stiefel manifold
$V_{\mathbb{R}}(k,n)\ $represents an ordered orthonormal $k$-frame. By fixing
an orientation of $\mathbb{R}^{n}$, $V_{\mathbb{R}}(k,n)\ (k<n)\ $can also be
described as
\[
V_{\mathbb{R}}(k,n)\simeq SO(n)/SO(n-k)
\]
which is the $SO(k)\ $principal bundle over the \textit{oriented Grassmannian}
$\widetilde{Gr}_{\mathbb{R}}(k,n)$
\[
\widetilde{Gr}_{\mathbb{R}}(k,n)\simeq SO(n)/\left(  SO(n-k)\times
SO(k)\right)  \simeq V_{\mathbb{R}}(k,n)/SO(k)
\]

\subsection{\label{SubsectionOct}Octonions}

In this article octonions $\mathbb{O}$ is a real algebra consisting of unit
$1$ and seven independent vectors $e_{i}\ \left(  i=1,2,...,7\right)  $
generating the algebra with relations
\begin{align*}
e_{i}^{2}  &  =-1,e_{i}e_{i+1}=e_{i+3},e_{i+1}e_{i}=-e_{i+3}\\
e_{i+1}e_{i+3}  &  =e_{i},\ e_{i+3}e_{i+1}=-e_{i},\ e_{i+3}e_{i}%
=e_{i+1},\ e_{i}e_{i+3}=-e_{i+1}%
\end{align*}
for $i=1,2,...,7$.

Therefrom it is easy to see that the algebra of $\mathbb{O\ }$is neither
commutative nor associative. This description of $\mathbb{O\ }$is intuitive
and useful to catch the symmetry of $\mathbb{O}$, but we use another
description of $\mathbb{O\ }$as a composition algebra on behalf of the base
free argument.

\bigskip

\begin{center}
\textbf{Normed algebras and Octonions}
\end{center}

Here we consider octonions $\mathbb{O\ }$as a normed algebra which is a case
of composition algebras. We have a brief introduction to normed algebras and
basic properties. We consider finite dimensional Euclidean vector space
$V\ $with positive definite inner product $\left\langle ,\right\rangle $. The
norm $\left\Vert x\right\Vert \ $of vector $x\ $in $V\ $is defined
\[
\left\Vert x\right\Vert :=\sqrt{\left\langle x,x\right\rangle },
\]
and the square norm $\left\Vert x\right\Vert ^{2}\ $is a \textit{quadratic
form\ }of $V$ which is a map satisfying $\left\Vert \lambda x\right\Vert
^{2}=\lambda^{2}\left\Vert x\right\Vert ^{2}\ $for $\lambda\in F$,\ $x\in
V$\ and $\left\Vert x+y\right\Vert ^{2}-\left\Vert x\right\Vert ^{2}%
-\left\Vert y\right\Vert ^{2}$ $\left(  x,y\in V\right)  \ $is bilinear. In
fact, by polarization
\[
\left\langle x,y\right\rangle =\frac{1}{2}\left\{  \left\Vert x+y\right\Vert
^{2}-\left\Vert x\right\Vert ^{2}-\left\Vert y\right\Vert ^{2}\right\}  .
\]

We consider algebras$\ $defined over a finite dimensional vector space $V$
over $\mathbb{R}$ with a multiplication $\cdot$ and its unit element $e$. The
algebra $A$ is called a \textit{normed algebra} if the multiplication\ is
compatible with the norm $\left\Vert {}\right\Vert \ $as
\[
\left\Vert a\cdot b\right\Vert =\left\Vert a\right\Vert \left\Vert
b\right\Vert \;,\;a,b\in A.
\]
This implies $\left\Vert e\right\Vert =1$ and $A$ is a division algebra,
namely for $a$, $b$ $\in$ $A$ with $a\cdot b=$ $0$ , then either $a$ $=0$ or
$b=0$. By this condition, we can perform the left or right cancellation.

\begin{notation}
The unit element $e\ $is also denoted by $1\ $unless there is confusion. It is
important to observe in Euclidean inner product $\left\langle ,\right\rangle
\ $is positive definite. One may begin with a quadratic form and induced
bilinear form. Here the bilinear form is supposed to be nondegenerated. We
also denote $a\cdot b\ $by $ab\ $unless there is confusion.
\end{notation}

\begin{example}
For the octonions $\mathbb{O\ }$defined above, if we consider a norm
$\left\Vert \ \ \right\Vert \ $induced from the inner product$\ $on
$\mathbb{O}$ such that $\ \left\{  e,e_{i}\ \left(  i=1,2,...,7\right)
\right\}  \ $forms orthonormal basis, namely,\
\[
\left\Vert a_{0}e+\sum_{i=1}^{7}a_{i}e_{i}\right\Vert =\sqrt{a_{0}^{2}%
+\sum_{i=1}^{7}a_{i}^{2}},
\]
we can see $\mathbb{O}$ is a normed algebra by the direct computation.
Similarly we conclude that $\mathbb{R},\mathbb{C},\mathbb{H\ }$are normed
algebra for the standard norm.
\end{example}

In below, we list basic properties of normed algebras which are especially
useful to handle octonions. The proofs of properties will be presented roughly
for leader to grab the general idea of octonions which appear ubiquitous in
this article. Reader can find the further detail in \cite{Spr} and \cite{Har}

\begin{lemma}
\label{OCTBasicLemma}Let $A\ $be a normed algebra with the unit $e$. Then for
$a,b,$ $x,y\in A$

(1) $\left\langle ax,bx\right\rangle =\left\langle a,b\right\rangle \left\Vert
x\right\Vert ^{2},\;\left\langle ya,yb\right\rangle =\left\Vert y\right\Vert
^{2}\left\langle a,b\right\rangle $

(2) $2\left\langle a,b\right\rangle \left\langle x,y\right\rangle
=\left\langle ax,by\right\rangle +\left\langle ay,bx\right\rangle $

(3) $xy+yx=2\left\langle y,e\right\rangle x+2\left\langle x,e\right\rangle
y-2\left\langle x,y\right\rangle e$

(4) (\textit{rank equation}) $x^{2}=2\left\langle x,e\right\rangle
x-\left\Vert x\right\Vert ^{2}e$ $\;$
\end{lemma}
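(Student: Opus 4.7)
The whole lemma unfolds from the normed-algebra identity $\|ab\|^2 = \|a\|^2\|b\|^2$ via the polarization formula
\[
\langle u,v\rangle = \tfrac{1}{2}\bigl(\|u+v\|^2 - \|u\|^2 - \|v\|^2\bigr)
\]
recalled just above. My plan is to derive (1)--(4) in order, each by linearizing the previous in one additional variable.

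For (1), I would expand $\|(a+b)x\|^2 = \|a+b\|^2\|x\|^2$ on both sides and cancel the pure quadratic pieces $\|ax\|^2 = \|a\|^2\|x\|^2$ and $\|bx\|^2 = \|b\|^2\|x\|^2$; what remains is $\langle ax, bx\rangle = \langle a,b\rangle\|x\|^2$. The right-multiplication version is obtained from $\|y(a+b)\|^2 = \|y\|^2\|a+b\|^2$ by the identical expansion. For (2), I would further linearize (1) in its last variable: replace $x$ by $x+y$ in $\langle ax,bx\rangle = \langle a,b\rangle\|x\|^2$, expand, and cancel the pure-$x$ and pure-$y$ contributions by two more applications of (1), leaving exactly $\langle ax, by\rangle + \langle ay, bx\rangle = 2\langle a,b\rangle\langle x,y\rangle$.

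The substantive step is (3). I would prove it weakly, by showing that $\langle xy+yx,\,c\rangle = \langle 2\langle y,e\rangle x + 2\langle x,e\rangle y - 2\langle x,y\rangle e,\,c\rangle$ for every $c \in A$; nondegeneracy of $\langle,\rangle$ then upgrades this to an equality in $A$. The plan is to apply (2) twice with targeted choices. First, specializing the four slots of (2) as $(x,c,y,e)$ and as $(y,c,x,e)$, together with the unit law $xe=x$, $ye=y$, $ce=c$, rewrites $\langle xy,c\rangle$ and $\langle yx,c\rangle$ as the desired combinations plus two auxiliary terms $\langle x,cy\rangle$ and $\langle y,cx\rangle$. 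Second, a third specialization of (2) with slots $(e,c,x,y)$ yields the symmetric identity $\langle x,cy\rangle + \langle y,cx\rangle = 2\langle e,c\rangle\langle x,y\rangle$, which exactly cancels those auxiliary terms. Finally, (4) is immediate from (3) by setting $y=x$ and dividing by two. I expect the main obstacle to be spotting the two substitutions in (2) that align the mixed terms appearing in the expansion of $\langle xy+yx, c\rangle$; once those are in place, the rest is clean bilinear bookkeeping.
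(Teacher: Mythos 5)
Your proposal is correct and follows essentially the same route as the paper: (1) by linearizing $\|(a+b)x\|^2=\|a+b\|^2\|x\|^2$, (2) by replacing $x$ with $x+y$ in (1), (3) by pairing against an arbitrary $c$ and using exactly the three specializations of (2) that the paper uses (with its $a$ playing the role of your $c$), and (4) by setting $y=x$. No gaps.
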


\begin{proof}
(1) By the condition of normed algebras and its linearization,
\begin{align*}
\left\Vert \left(  a+b\right)  x\right\Vert ^{2}  &  =\left\Vert a\right\Vert
^{2}\left\Vert x\right\Vert ^{2}+2\left\langle ax,bx\right\rangle +\left\Vert
b\right\Vert ^{2}\left\Vert x\right\Vert ^{2},\\
\left\Vert \left(  a+b\right)  x\right\Vert ^{2}  &  =\left\Vert
a+b\right\Vert ^{2}\left\Vert x\right\Vert ^{2}=\left(  \left\Vert
a\right\Vert ^{2}+2\left\langle a,b\right\rangle +\left\Vert b\right\Vert
^{2}\right)  \left\Vert x\right\Vert ^{2},
\end{align*}
hence
\[
\left\langle ax,bx\right\rangle =\left\langle a,b\right\rangle \left\Vert
x\right\Vert ^{2}.
\]
In a similar way
\[
\left\langle ya,yb\right\rangle =\left\Vert y\right\Vert ^{2}\left\langle
a,b\right\rangle .
\]

(2) Replace $x$ by $x+y$ in (1).

(3) By applying (2)%
\begin{align*}
\left\langle xy,ae\right\rangle +\left\langle yx,ae\right\rangle  &
=2\left\langle x,a\right\rangle \left\langle y,e\right\rangle +2\left\langle
y,a\right\rangle \left\langle x,e\right\rangle -\left\langle x,ay\right\rangle
-\left\langle y,ax\right\rangle \\
&  =\left\langle 2\left\langle y,e\right\rangle x+2\left\langle
x,e\right\rangle y,a\right\rangle -\left\{  \left\langle ex,ay\right\rangle
+\left\langle ey,ax\right\rangle \right\} \\
&  =\left\langle 2\left\langle y,e\right\rangle x+2\left\langle
x,e\right\rangle y,a\right\rangle -2\left\langle x,y\right\rangle \left\langle
e,a\right\rangle \\
&  =\left\langle 2\left\langle y,e\right\rangle x+2\left\langle
x,e\right\rangle y-2\left\langle x,y\right\rangle ,a\right\rangle .
\end{align*}
and since above is true for any $a\in A,$ we get (3).

(4) Set $y=x$. \ \ \ \ \ \ \ \ \ \ \ \ \ \ \ \ \ \ \ \ \ \ \ \ $\blacksquare$
\end{proof}

Note above Lemma \ref{OCTBasicLemma} implies that $xy=-yx$ if $x\perp y$ and
normed algebras are \textit{power associative} which means by any subalgebra
generated by any one elements is associative.

In a normed algebra $A$, we can define \textbf{conjugation} for each $a$ $\in$
$A$ as follow,
\[
\bar{a}:=2\left\langle a,e\right\rangle e-a.
\]
and we call $\left\langle a,e\right\rangle e$ as the real part of $a$ (denote
$\operatorname{Re}a$ )and $a-\left\langle a,e\right\rangle e$ as imaginary
part of $a$ (denote $\operatorname{Im}a$). Note conjugation preserves any subalgebra.

\begin{lemma}
\label{conjugationLemma}For $x$, $y,z$ of normed algebra $A,$
\begin{align*}
x\bar{x}  &  =\bar{x}x=\left\Vert x\right\Vert ^{2}e=\left\Vert \bar
{x}\right\Vert ^{2}e,\ \overline{xy}=\bar{y}\bar{x}\;,\;\overline{\bar{x}}=x\\
\overline{x+y}  &  =\bar{x}+\bar{y},\;\left\langle \bar{x},\bar{y}%
\right\rangle =\left\langle x,y\right\rangle ,\ \left\langle x,y\right\rangle
e=\operatorname{Re}\left(  x\bar{y}\right)  e
\end{align*}
and
\[
\left\langle xy,z\right\rangle =\left\langle x,z\bar{y}\right\rangle
=\left\langle y,\bar{x}z\right\rangle
\]

\end{lemma}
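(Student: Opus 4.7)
The plan is to work directly from the definition $\bar{a}:=2\langle a,e\rangle e-a$ and derive every identity by successive application of the four properties in Lemma \ref{OCTBasicLemma}, extracting along the way one master formula for $\langle xy,e\rangle$ from which everything else cascades.

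First I would dispose of the purely linear/definitional statements. Linearity $\overline{x+y}=\bar x+\bar y$ and the involution property $\overline{\bar x}=x$ (using $\|e\|=1$) follow by plugging straight into the definition, and $\langle \bar x,\bar y\rangle=\langle x,y\rangle$ is an immediate bilinear expansion. For $x\bar x=\bar x x=\|x\|^2 e$, I would substitute $\bar x=2\langle x,e\rangle e-x$ and apply the rank equation (Lemma \ref{OCTBasicLemma}(4)) to collapse $x^2=2\langle x,e\rangle x-\|x\|^2 e$, so the cross terms cancel; the equality $\|\bar x\|^2=\|x\|^2$ then comes for free from the previous step (or by taking norms on both sides).

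The heart of the proof is the identity $\langle xy,e\rangle=2\langle x,e\rangle\langle y,e\rangle-\langle x,y\rangle$, which I would obtain from Lemma \ref{OCTBasicLemma}(2) by the specialization $(a,b,x,y)\mapsto(x,e,y,e)$, giving $2\langle x,e\rangle\langle y,e\rangle=\langle xy,e\rangle+\langle x,y\rangle$. With this master formula in hand, the conjugation of a product follows: expanding $\bar y\bar x=(2\langle y,e\rangle e-y)(2\langle x,e\rangle e-x)$ into four terms and replacing $yx$ via Lemma \ref{OCTBasicLemma}(3) yields $\bar y\bar x=4\langle x,e\rangle\langle y,e\rangle e-2\langle x,y\rangle e-xy$, which agrees with $\overline{xy}=2\langle xy,e\rangle e-xy$ by the master formula. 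For $\langle x,y\rangle e=\operatorname{Re}(x\bar y)e$, the task is $\langle x\bar y,e\rangle=\langle x,y\rangle$; applying the master formula to $x\bar y$ together with the identities $\langle\bar y,e\rangle=\langle y,e\rangle$ and $\langle x,\bar y\rangle=2\langle y,e\rangle\langle x,e\rangle-\langle x,y\rangle$ collapses the right-hand side to $\langle x,y\rangle$.

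For the final adjoint identities $\langle xy,z\rangle=\langle x,z\bar y\rangle=\langle y,\bar xz\rangle$, I would expand $z\bar y=2\langle y,e\rangle z-zy$, invoke Lemma \ref{OCTBasicLemma}(2) with $(a,b,x,y)\mapsto(x,z,y,e)$ to get $2\langle x,z\rangle\langle y,e\rangle=\langle xy,z\rangle+\langle x,zy\rangle$, and substitute; the symmetric identity uses the mirror specialization $(x,e,y,z)$. The main obstacle is nothing deep but purely combinatorial bookkeeping: Lemma \ref{OCTBasicLemma}(2) has enough free variables that choosing the right four-tuple for each identity is the only non-mechanical step, and the genuine subtlety is the derivation of $\overline{xy}=\bar y\bar x$, which requires combining (2), (3), and the master formula rather than any one of them alone.
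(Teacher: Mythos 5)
Your proposal is correct and follows essentially the same route as the paper: the paper dismisses all but the last identity as ``straightforward calculation with the definition of conjugation'' and proves $\left\langle y,\bar{x}z\right\rangle =\left\langle xy,z\right\rangle$ by exactly your specialization of Lemma \ref{OCTBasicLemma}(2) to the tuple $(x,e,y,z)$. Your extra detail --- the master formula $\left\langle xy,e\right\rangle =2\left\langle x,e\right\rangle \left\langle y,e\right\rangle -\left\langle x,y\right\rangle$ and its use with Lemma \ref{OCTBasicLemma}(3) to get $\overline{xy}=\bar{y}\bar{x}$ --- is a sound and complete filling-in of the computations the paper omits.
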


\begin{proof}
\ One can get the above by straight forward calculation with the definition of
conjugation. We show the last one.
\begin{align*}
\left\langle y,\bar{x}z\right\rangle  &  =\left\langle y,\left(  2\left\langle
x,e\right\rangle e-x\right)  z\right\rangle =2\left\langle x,e\right\rangle
\left\langle y,z\right\rangle -\left\langle y,xz\right\rangle \\
&  =\left\langle xy,ez\right\rangle +\left\langle xz,ey\right\rangle
-\left\langle y,xz\right\rangle =\left\langle xy,z\right\rangle .
\end{align*}

\ $\blacksquare$
\end{proof}

From above lemma we derive the following Lemma,

\begin{lemma}
\label{ConjugationLemma2}\ For all $x,$ $y,$ $z$ of a normed algebra $A,$%
\begin{align*}
x\left(  \bar{x}y\right)   &  =\left\Vert x\right\Vert ^{2}y\;,\;\left(
x\bar{y}\right)  y=x\left\Vert y\right\Vert ^{2}\\
x\left(  \bar{y}z\right)  +y\left(  \bar{x}z\right)   &  =2\left\langle
x,y\right\rangle z,\;\left(  x\bar{y}\right)  z+\left(  x\bar{z}\right)
y=2\left\langle y,z\right\rangle x
\end{align*}

\end{lemma}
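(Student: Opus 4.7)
The plan is to derive the first two identities by combining the ``adjoint'' identity $\langle ab, c\rangle = \langle b, \bar{a}c\rangle = \langle a, c\bar{b}\rangle$ from Lemma \ref{conjugationLemma} with the left/right compatibility $\langle ax,bx\rangle=\langle a,b\rangle\|x\|^{2}$ and $\langle ya,yb\rangle=\|y\|^{2}\langle a,b\rangle$ from Lemma \ref{OCTBasicLemma}(1). Concretely, for any test vector $w\in A$, I would compute
\[
\langle x(\bar{x}y), w\rangle = \langle \bar{x}y, \bar{x}w\rangle = \|\bar{x}\|^{2}\langle y,w\rangle = \|x\|^{2}\langle y,w\rangle,
\]
where the first equality uses $\langle xa,w\rangle=\langle a,\bar{x}w\rangle$ with $a=\bar{x}y$, and the second uses the left-compatibility of the inner product. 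Since the bilinear form is nondegenerate, this forces $x(\bar{x}y)=\|x\|^{2}y$. The companion identity $(x\bar{y})y = x\|y\|^{2}$ follows by the mirror-symmetric computation $\langle (x\bar{y})y, w\rangle = \langle x\bar{y}, w\bar{y}\rangle = \|y\|^{2}\langle x,w\rangle$, again appealing to nondegeneracy.

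For the remaining two identities, I would polarize the ones just established. Replacing $x$ by $x+y$ in $x(\bar{x}z)=\|x\|^{2}z$ gives
\[
x(\bar{x}z)+x(\bar{y}z)+y(\bar{x}z)+y(\bar{y}z)=\bigl(\|x\|^{2}+2\langle x,y\rangle+\|y\|^{2}\bigr)z,
\]
and subtracting the two ``pure'' identities $x(\bar{x}z)=\|x\|^{2}z$ and $y(\bar{y}z)=\|y\|^{2}z$ leaves exactly $x(\bar{y}z)+y(\bar{x}z)=2\langle x,y\rangle z$. The last identity $(x\bar{y})z+(x\bar{z})y=2\langle y,z\rangle x$ is obtained in the same way by replacing $y$ with $y+z$ in $(x\bar{y})y=\|y\|^{2}x$.

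There is no real obstacle here; the content of the lemma is simply that the Moufang-type identities for a normed algebra are encoded in the adjoint relation of Lemma \ref{conjugationLemma} plus nondegeneracy of $\langle\,,\,\rangle$. The only care needed is to use the correct side of the adjoint identity (left vs.\ right) when deriving each of $x(\bar{x}y)=\|x\|^{2}y$ and $(x\bar{y})y=x\|y\|^{2}$, so that the compatibility from Lemma \ref{OCTBasicLemma}(1) applies on the matching side; after that, polarization is mechanical.
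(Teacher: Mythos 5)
Your proposal is correct and follows essentially the same route as the paper: the paper also proves $x(\bar{x}y)=\|x\|^{2}y$ by pairing against an arbitrary test vector, applying the adjoint relation $\langle xa,z\rangle=\langle a,\bar{x}z\rangle$ and the compatibility $\langle \bar{x}y,\bar{x}z\rangle=\|\bar{x}\|^{2}\langle y,z\rangle$, and then obtains the remaining identities by conjugation and linearization (your polarization). The only cosmetic difference is that you verify the mirror identity $(x\bar{y})y=x\|y\|^{2}$ directly rather than by conjugating the first, which is equally valid.
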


\begin{proof}
The first identity holds since for any $z$ $\in$ $A$ ,
\[
\left\langle x\left(  \bar{x}y\right)  ,z\right\rangle =\left\langle \bar
{x}y,\bar{x}z\right\rangle =\left\Vert \bar{x}\right\Vert ^{2}\left\langle
y,z\right\rangle =\left\langle \left\Vert x\right\Vert ^{2}y,z\right\rangle .
\]
And the remains are conjugation and linearization of the first
two.\ \ \ $\blacksquare$
\end{proof}

From this Lemma \ref{ConjugationLemma2}, we can easily obtain the following.

\begin{corollary}
\label{AlternativeCoro}(Alternative laws) For all $x,$ $y$ of a normed algebra
$A,$%
\[
\left(  xy\right)  x=x\left(  yx\right)  ,\ x\left(  xy\right)  =x^{2}%
y,\left(  xy\right)  y=xy^{2}%
\]

\end{corollary}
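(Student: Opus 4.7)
The plan is to derive all three alternative laws directly from Lemma \ref{ConjugationLemma2} using the conjugation identity $\bar{x} = 2\langle x,e\rangle e - x$ and the rank equation $x^{2} = 2\langle x,e\rangle x - \|x\|^{2}e$ from Lemma \ref{OCTBasicLemma}(4). The key observation is that the first two identities in Lemma \ref{ConjugationLemma2} already look like disguised versions of the left and right alternative laws — they differ only by a conjugation.

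First, for the left alternative law $x(xy) = x^{2}y$, I would substitute $\bar{x} = 2\langle x,e\rangle e - x$ into the identity $x(\bar{x}y) = \|x\|^{2}y$, obtaining
\[
2\langle x,e\rangle(xy) - x(xy) = \|x\|^{2}y,
\]
which rearranges to $x(xy) = \bigl(2\langle x,e\rangle x - \|x\|^{2}e\bigr)y = x^{2}y$ by the rank equation. Second, for the right alternative law $(xy)y = xy^{2}$, I would apply the dual trick to $(x\bar{y})y = x\|y\|^{2}$: substituting $\bar{y} = 2\langle y,e\rangle e - y$ yields
\[
2\langle y,e\rangle(xy) - (xy)y = x\|y\|^{2},
\]
and rearranging gives $(xy)y = x\bigl(2\langle y,e\rangle y - \|y\|^{2}e\bigr) = xy^{2}$.

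Third, for the flexible law $(xy)x = x(yx)$, I would linearize the right alternative law by replacing $y$ with $y+z$. Expanding $(x(y+z))(y+z) = x(y+z)^{2}$ and cancelling the diagonal terms $(xy)y = xy^{2}$ and $(xz)z = xz^{2}$ leaves the bilinear identity
\[
(xy)z + (xz)y = x(yz) + x(zy).
\]
Setting $z = x$ and using the left alternative law $x(xy) = x^{2}y$ on the right-hand side cancels the term $x^{2}y$ that appears on both sides, leaving exactly $(xy)x = x(yx)$.

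There is no real obstacle here; the content is purely formal manipulation once the two identities of Lemma \ref{ConjugationLemma2} and the rank equation are in hand. The only place where one must be careful is in the linearization step for the flexible law, since we cannot freely reassociate products, so the cancellation must use the already-established alternative laws in exactly the form stated, rather than any ambient associativity.
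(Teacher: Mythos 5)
Your proof is correct and follows the route the paper itself indicates: the paper gives no explicit argument, stating only that the corollary follows easily from Lemma \ref{ConjugationLemma2}, and your derivation (substituting $\bar{x}=2\langle x,e\rangle e-x$ into $x(\bar{x}y)=\|x\|^{2}y$ and $(x\bar{y})y=x\|y\|^{2}$, invoking the rank equation, then linearizing to get the flexible law) is exactly the standard way to fill in that claim. No gaps; the care you take to avoid reassociating in the linearization step is exactly right.
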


Based on above lemmas, we can get produce another proof of Artin's theorem.

\begin{proposition}
\label{Artin}(E. Artin) The subalgebra generated by any two elements of a
normed algebra is associative, namely it is \textit{alternative}.
\end{proposition}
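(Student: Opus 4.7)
The plan is to show that the associator $[a,b,c] := (ab)c - a(bc)$ is an alternating trilinear map on $A$, from which associativity of any two-generator subalgebra follows by a standard degree-induction argument using Moufang-type identities. The alternating property is the structural core of the statement; once it is in hand, the rest is bookkeeping.

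Step 1 (alternating associator). Rewrite Corollary \ref{AlternativeCoro} in associator form: the left alternative law $x(xy) = x^2 y$ reads $[x,x,y] = 0$ and the right alternative law $(xy)y = xy^2$ reads $[x,y,y] = 0$. Since $[\cdot,\cdot,\cdot]$ is $\mathbb{R}$-trilinear, polarizing the first identity by substituting $x\mapsto x+z$ and extracting cross terms yields $[x,z,y] + [z,x,y] = 0$; polarizing the second yields $[x,y,z] + [x,z,y] = 0$. Thus the associator is antisymmetric under swapping the first two or the last two entries, and hence alternating in all three arguments. In particular $[x,y,x] = 0$, which recovers the flexible law $(xy)x = x(yx)$ already noted in Corollary \ref{AlternativeCoro}.

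Step 2 (passing to the subalgebra). Let $B(x,y)$ denote the subalgebra generated by $\{x,y\}$; as a real vector space it is spanned by monomials in the noncommutative symbols $x,y$ together with $e$. Because the associator is trilinear and vanishes whenever any entry equals $e$ or whenever two of its entries coincide, it suffices to prove $[u,v,w]=0$ for all monomials $u,v,w$ in $x,y$. Combining the alternating associator with the alternative laws, one derives the three Moufang identities (left, right, and middle), which dispose of the cases where $u,v,w$ have small total degree. An induction on the total degree, with the rank equation $x^{2} = 2\langle x,e\rangle x - \|x\|^{2}e$ (and likewise for $y$) from Lemma \ref{OCTBasicLemma}(4) reducing higher powers of either variable to lower-degree expressions, then propagates associativity to all monomials and thus to all of $B(x,y)$.

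The main obstacle will be Step 2: the degree-induction requires careful organization, since one must split on which of $u,v,w$ carries the highest degree and then apply the appropriate Moufang identity or linearization of the alternative laws to factor off a piece that can be moved across the associator. Step 1 is essentially just polarization and is the content of the theorem, whereas Step 2 is routine in spirit but lengthy in practice, which is the reason Artin's theorem is traditionally cited rather than reproved from scratch.
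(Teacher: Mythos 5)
Your Step 1 is correct: polarizing the left and right alternative laws of Corollary \ref{AlternativeCoro} does show that the associator $[u,v,w]=(uv)w-u(vw)$ is alternating, and this is the standard opening move in the general proof of Artin's theorem for alternative algebras. The problem is Step 2, which you yourself flag as the main obstacle and then do not carry out. The degree induction over monomials in $x,y$ is not bookkeeping; it is the entire content of the theorem in the generality you have set up. One must actually specify how a monomial of higher degree is split as a product of lower-degree ones, verify that the split is compatible with moving factors across the associator, and supply the precise linearized Moufang identities needed at each stage (the middle Moufang identity alone does not dispose of all configurations). As written, the proposal asserts that "an induction \ldots propagates associativity to all monomials" without exhibiting the induction, so the proof is incomplete at exactly the point where all the work lies.

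The paper avoids this induction entirely by exploiting the normed structure, and this is the idea you are missing. After normalizing so that $a\perp b$, $a\perp e$, $b\perp e$, the identities of Lemma \ref{ConjugationLemma2} (e.g.\ $x(\bar y z)+y(\bar x z)=2\langle x,y\rangle z$) together with the rank equation force every product of $e,a,b,ab$ back into the linear span of $e,a,b,ab$; hence the two-generated subalgebra $B$ is at most four-dimensional with an explicit basis. Associativity then reduces to a finite check on these basis elements, and the alternative laws plus $ab=(-b)a$ leave only the single computation $a(b(ab))=((ab)(ab))$, which the paper does in four lines. If you want to keep your general-alternative-algebra route, you must either write out the monomial induction in full or cite it; otherwise, replace Step 2 with the dimension count available in the normed setting, which is what makes a short self-contained proof possible here.
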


\begin{proof}
Since normed algebras are power associative, we may assume the subalgebra $B$
generated by $a$, $b$ such that $a\perp b,$ $a\perp e$ and $b\perp e$. Then
$B$ contains another vector $ab$ which is orthogonal to the vector space span
by $e$, $a\;$and $b$ because
\[
\left\langle ab,e\right\rangle =\left\langle a,\bar{b}\right\rangle
=-\left\langle a,b\right\rangle =0,\left\langle ab,a\right\rangle =\left\Vert
a\right\Vert ^{2}\left\langle b,e\right\rangle =0.
\]
Note that for any $x$ such that $x\perp e$,
\[
x^{2}=-x\left(  -x\right)  =\left(  -x\right)  \bar{x}=-\left\Vert
x\right\Vert ^{2}e.
\]
From this and lemma \ref{ConjugationLemma2}, we see $B$ doesn't have more
independent vectors. Now for the associativity, we only need to see the
associativity among $a$, $b$ and $ab$. By the alternative laws in Corollary
\ref{AlternativeCoro}%
\[
a\left(  ab\right)  =a^{2}b,a\left(  ba\right)  =\left(  ab\right)  a,
\]
and $ab=\left(  -b\right)  a\ $since $a\perp b$. Therefore, we only need to
check a case $a\cdot\left(  b\cdot\left(  a\cdot b\right)  \right)  $.
\begin{align*}
a\left(  b\left(  ab\right)  \right)   &  =a\left(  \left(  -b\right)  \left(
ba\right)  \right)  =a\left(  b\left(  \bar{b}a\right)  \right)  =\left\Vert
b\right\Vert ^{2}a^{2}\\
&  =-\left\Vert a\right\Vert ^{2}\left\Vert b\right\Vert ^{2}=-\left\Vert
ab\right\Vert ^{2}=-\left(  ab\right)  \overline{\left(  ab\right)  }=\left(
\left(  ab\right)  \left(  ab\right)  \right)  .
\end{align*}

\ $\blacksquare$
\end{proof}

The following identities play important roles for studying normed algebra.

\begin{proposition}
\label{MoufangPro}(Moufang identities) For any $a,x,y$ of a normed algebra
$A,$ the following identities hold
\[
\left(  ax\right)  \left(  ya\right)  =a\left(  xy\right)  a,\ a\left(
x\left(  ay\right)  \right)  =\left(  axa\right)  y,\ x\left(  aya\right)
=\left(  \left(  xa\right)  y\right)  a
\]

\end{proposition}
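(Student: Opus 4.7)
The plan is to prove the middle identity $a(x(ay)) = (axa)y$ first, using the two polarization identities of Lemma \ref{ConjugationLemma2}, and then to deduce the other two by an adjoint/conjugation duality. The expression $axa$ is unambiguous because the flexible law $(ax)a = a(xa)$ is a case of Corollary \ref{AlternativeCoro}, and the explicit formula for $(ax)a$ derived below shows $axa$ lies in $\mathrm{span}\{a,\bar x,e\}$, so that $(axa)y$ is unambiguous as well.

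For the middle identity, I would compute both sides as explicit linear combinations of $ay$ and $\bar x\, y$ via a single substitution each into Lemma \ref{ConjugationLemma2}. For the left side, substituting $(x,y,z)\mapsto(a,\bar x,a)$ into $(x\bar y)z + (x\bar z)y = 2\langle y,z\rangle x$ and using $a\bar a = \|a\|^2 e$ expresses $(ax)a$ in the form $\alpha a + \beta\bar x$ with scalar coefficients $\alpha,\beta$; right-multiplication by $y$ then yields $(axa)y$. For the right side, substituting $(x,y,z)\mapsto(a,\bar x,ay)$ into $x(\bar y z) + y(\bar x z) = 2\langle x,y\rangle z$, combined with $\bar a(ay) = \|a\|^2 y$ (again Lemma \ref{ConjugationLemma2}, applied with $x\mapsto\bar a$), expresses $a(x(ay))$ as the same linear combination, up to the symmetry $\langle a,\bar x\rangle = \langle\bar x,a\rangle$ of Lemma \ref{conjugationLemma}.

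Once the middle identity is in hand, I would derive $(ax)(ya) = a(xy)a$ by pairing both sides with an arbitrary $z$ and applying the adjoint relations $\langle uv,z\rangle = \langle u,z\bar v\rangle = \langle v,\bar u z\rangle$ from Lemma \ref{conjugationLemma} three times to each side. Both inner products reduce to expressions of the form $\langle x,\cdot\rangle$, and their equality becomes the assertion $\bar a(z(\bar a\bar y)) = (\bar a(z\bar a))\bar y$, which is precisely the middle identity with $(a,x,y)\mapsto(\bar a,z,\bar y)$ and hence already known. Nondegeneracy of $\langle\cdot,\cdot\rangle$ then yields the first identity. The third identity follows from the middle one by taking conjugates and using $\overline{uv} = \bar v\bar u$ together with $\overline{axa} = \bar a\bar x\bar a$.

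The main obstacle is choosing the substitutions into Lemma \ref{ConjugationLemma2} so that the two halves of the middle identity collapse to the same linear combination; the key insight is to engineer the occurrences of $a\bar a$ and $\bar a a$ so that $\|a\|^2 e$ absorbs the "extra" term on each side. Once these substitutions are set up, everything else is formal, and no further non-associative subtleties arise beyond the flexible law already recorded in Corollary \ref{AlternativeCoro}.
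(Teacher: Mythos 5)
Your proposal is correct, but it takes a genuinely different route from the paper's. The paper proves the first identity $\left(ax\right)\left(ya\right)=a\left(xy\right)a$ from scratch by pairing both sides with an arbitrary $z$ and expanding via the linearized composition law of Lemma \ref{OCTBasicLemma}(2), and only then obtains the middle identity by a dual-pairing argument that invokes the first; the third is left to the reader. You invert this order: you first establish the middle identity directly by deriving the explicit linear form $axa=2\left\langle \bar{x},a\right\rangle a-\left\Vert a\right\Vert ^{2}\bar{x}$ from the two polarized identities of Lemma \ref{ConjugationLemma2} (substituting $(a,\bar{x},a)$ into $\left(x\bar{y}\right)z+\left(x\bar{z}\right)y=2\left\langle y,z\right\rangle x$ and $(a,\bar{x},ay)$ into $x\left(\bar{y}z\right)+y\left(\bar{x}z\right)=2\left\langle x,y\right\rangle z$, with $a\bar{a}=\bar{a}\left(ay\right)/y$-type absorption doing the work), so that both $\left(axa\right)y$ and $a\left(x\left(ay\right)\right)$ collapse to $2\left\langle \bar{x},a\right\rangle \left(ay\right)-\left\Vert a\right\Vert ^{2}\left(\bar{x}y\right)$; you then deduce the first identity from the middle one by the adjoint relations (your reduction to $\bar{a}\left(z\left(\bar{a}\bar{y}\right)\right)=\left(\bar{a}z\bar{a}\right)\bar{y}$ checks out), and the third by conjugation. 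Your route buys a reusable closed formula showing $x\mapsto axa$ is a linear map into $\operatorname{span}\left\{a,\bar{x}\right\}$ and proves the middle identity without any nondegeneracy argument, at the cost of a slightly less uniform presentation; it also makes the third identity explicit rather than implicit. The two dual-pairing steps (yours for first-from-middle, the paper's for middle-from-first) are essentially adjoint to one another, so the logical content is the same even though the dependency order is reversed.
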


\begin{proof}
Without long generality, it is enough to consider
\[
\left(  ax\right)  \left(  ya\right)  =a\left(  \left(  xy\right)  a\right)
,\ a\left(  x\left(  ay\right)  \right)  =\left(  a\left(  xa\right)  \right)
y
\]
For arbitrary $z\in A$,%
\begin{align*}
\left\langle \left(  ax\right)  \left(  ya\right)  ,z\right\rangle  &
=\left\langle ya,\overline{\left(  ax\right)  }z\right\rangle =2\left\langle
y,\overline{ax}\right\rangle \left\langle a,z\right\rangle -\left\langle
yz,\overline{\left(  ax\right)  }a\right\rangle \\
&  =2\left\langle xy,\bar{a}\right\rangle \left\langle a,z\right\rangle
-\left\Vert a\right\Vert ^{2}\left\langle yz,\bar{x}\right\rangle
\end{align*}
and
\begin{align*}
\left\langle a\left(  \left(  xy\right)  a\right)  ,z\right\rangle  &
=\left\langle \left(  xy\right)  a,\bar{a}z\right\rangle =2\left\langle
xy,\bar{a}\right\rangle \left\langle a,z\right\rangle -\left\langle \left(
xy\right)  z,\bar{a}a\right\rangle \\
&  =2\left\langle xy,\bar{a}\right\rangle \left\langle a,z\right\rangle
-\left\Vert a\right\Vert ^{2}\left\langle xy,\bar{z}\right\rangle .
\end{align*}
Since $\left\langle yz,\bar{x}\right\rangle =\left\langle xy,\bar
{z}\right\rangle $, we have $\left(  ax\right)  \left(  ya\right)  =a\left(
xy\right)  a$.

Again for arbitrary $z\in A$,
\begin{align*}
\left\langle \left(  a\left(  xa\right)  \right)  y,z\right\rangle  &
=\left\langle xa,\bar{a}\left(  z\bar{y}\right)  \right\rangle =\left\langle
x,\left(  \bar{a}\left(  z\bar{y}\right)  \right)  \bar{a}\right\rangle
=\left\langle x,\left(  \bar{a}z\right)  \left(  \bar{y}\bar{a}\right)
\right\rangle \\
&  =\left\langle x\left(  ay\right)  ,\left(  \bar{a}z\right)  \right\rangle
=\left\langle a\left(  x\left(  ay\right)  \right)  ,z\right\rangle .
\end{align*}
Thus we have $a\left(  x\left(  ay\right)  \right)  =\left(  axa\right)
y$.\ \ \ $\blacksquare$
\end{proof}

\bigskip

\begin{remark}
\label{MoufangRmk}In general $\left(  xa\right)  \left(  ay\right)  =x\left(
a^{2}y\right)  ,$ $\left(  xa\right)  \left(  ay\right)  =\left(
xa^{2}\right)  y$ $\ $may not be true. For octonions,%
\begin{align*}
\left(  e_{1}e_{2}\right)  \left(  e_{2}e_{3}\right)   &  =e_{4}e_{5}=e_{7}\\
e_{1}\left(  e_{2}^{2}e_{3}\right)   &  =-e_{1}e_{3}=-e_{7}=\left(  e_{1}%
e_{2}^{2}\right)  e_{3}%
\end{align*}

\end{remark}

We also state a theorem of classification of normed algebras.

\begin{theorem}
(Hurwitz) The only normed algebras over $\mathbb{R}$ with its norm inducing
positive definite inner product are real, complex, quaternions and octonions.
\end{theorem}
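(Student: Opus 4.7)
The plan is to use the classical Cayley--Dickson doubling procedure, inducting on a chain of subalgebras inside $A$ and exploiting the identities already proved to control multiplication, conjugation, and norm at each stage.

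First observe that $\mathbb{R}\cdot e$ is a subalgebra, and if $A=\mathbb{R}\cdot e$ we are done. Otherwise we iteratively construct inside $A$ a chain $\mathbb{R}\subset\mathbb{C}\subset\mathbb{H}\subset\mathbb{O}$. The inductive step is a \emph{doubling lemma}: given a proper subalgebra $B\subset A$ which is closed under conjugation and contains $e$, choose a unit vector $f\in B^{\perp}$. The rank equation (Lemma \ref{OCTBasicLemma}(4)) together with $\langle f,e\rangle=0$ gives $f^{2}=-e$, and Lemma \ref{conjugationLemma} yields $\langle bf,c\rangle=\langle b,c\bar f\rangle=-\langle b,cf\rangle$, so $B$ and $Bf$ are orthogonal. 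I then claim $B':=B\oplus Bf$ is a subalgebra and derive the Cayley--Dickson product
\[
(a+bf)(c+df)=(ac-\bar{d}b)+(da+b\bar{c})f,\qquad \overline{a+bf}=\bar a-bf,
\]
using the alternative laws (Corollary \ref{AlternativeCoro}) and the Moufang identities (Proposition \ref{MoufangPro}) to rewrite products such as $(bf)(df)$ and $(bf)c$. Orthogonality of $B$ and $Bf$ together with the norm-preserving identities in Lemma \ref{OCTBasicLemma}(1) makes $\|a+bf\|^{2}=\|a\|^{2}+\|b\|^{2}$, from which $\|xy\|=\|x\|\|y\|$ on $B'$ follows, \emph{provided} $B$ itself is associative.

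The crucial structural point is that the doubling formula loses one property at each stage: $\mathbb{R}$ is commutative and associative; $\mathbb{C}=\mathbb{R}\oplus\mathbb{R}f_{1}$ remains so; $\mathbb{H}=\mathbb{C}\oplus\mathbb{C}f_{2}$ is still associative but no longer commutative (the defect is the term $b\bar c - \bar c b$); and $\mathbb{O}=\mathbb{H}\oplus\mathbb{H}f_{3}$ is only alternative, as guaranteed by Artin's theorem (Proposition \ref{Artin}), with associativity defects precisely those recorded in Remark \ref{MoufangRmk}. Hence after at most three doublings we reach a subalgebra isomorphic to $\mathbb{R},\mathbb{C},\mathbb{H}$, or $\mathbb{O}$; as $A$ is finite-dimensional, the chain must terminate.

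The main obstacle is the termination at $\mathbb{O}$: we must rule out a \emph{further} doubling. Suppose $B\cong\mathbb{O}$ is a proper subalgebra of $A$ and $f\in B^{\perp}$ is a unit vector. Then the same formal Cayley--Dickson product defines $B'=B\oplus Bf$, and a direct calculation of $\|(b f)(df)\|^{2}$ and comparison with $\|bf\|^{2}\|df\|^{2}=\|b\|^{2}\|d\|^{2}$ shows that any nonzero associator $[b,c,d]:=(bc)d-b(cd)$ in $B$ contributes a term
\[
2\,\langle\,[b,c,d],\,\cdot\,\rangle
\]
to the discrepancy that cannot be cancelled under a positive-definite inner product. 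Since $\mathbb{O}$ has nonzero associators (Remark \ref{MoufangRmk}), this forces $\|xy\|\neq\|x\|\|y\|$ for some $x,y\in B'$, contradicting that $A$ is normed. Therefore the chain cannot be extended past $\mathbb{O}$, and $A$ must coincide with one of $\mathbb{R},\mathbb{C},\mathbb{H},\mathbb{O}$.
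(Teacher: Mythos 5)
The paper does not actually prove this theorem: it is stated as a classical result, with the reader referred to \cite{Spr} and \cite{Har}. Your proposal supplies the standard Cayley--Dickson doubling proof, which is the argument given in those references, and its architecture is sound: build a chain $\mathbb{R}\subset\mathbb{C}\subset\mathbb{H}\subset\mathbb{O}$ of subalgebras by repeatedly adjoining a unit vector $f$ orthogonal to the current subalgebra $B$ (so $f^{2}=-e$ by the rank equation), show the induced product on $B\oplus Bf$ is forced to be the Cayley--Dickson product, and terminate the chain by showing that the double of a nonassociative algebra cannot satisfy $\|xy\|=\|x\|\,\|y\|$.

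Two steps need repair. First, your derivation of $B\perp Bf$ does not prove what you claim: the chain $\langle bf,c\rangle=\langle b,c\bar f\rangle=-\langle b,cf\rangle$ shows only a sign change under $f\mapsto\bar f$, not vanishing. The correct one-liner uses the other adjunction identity of Lemma \ref{conjugationLemma}: $\langle bf,c\rangle=\langle f,\bar b c\rangle=0$ since $\bar b c\in B$ and $f\perp B$. Second, the termination step is the entire content of the theorem and is the vaguest part of your write-up. The clean version: from the doubling formula and the orthogonality of $B$ and $Bf$,
\[
\left\Vert (a+bf)(c+df)\right\Vert ^{2}=\left(  \left\Vert a\right\Vert
^{2}+\left\Vert b\right\Vert ^{2}\right)  \left(  \left\Vert c\right\Vert
^{2}+\left\Vert d\right\Vert ^{2}\right)  -2\left\langle ac,\bar
{d}b\right\rangle +2\left\langle da,b\bar{c}\right\rangle ,
\]
so the composition law on $B\oplus Bf$ is equivalent to $\left\langle ac,\bar{d}b\right\rangle =\left\langle da,b\bar{c}\right\rangle $ for all $a,b,c,d\in B$; by Lemma \ref{conjugationLemma} both sides rewrite against $b$, giving $\left\langle b,d(ac)-(da)c\right\rangle =0$ for all $b$, i.e.\ associativity of $B$ under the nondegenerate form. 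This is the precise statement behind your phrase about an uncancellable associator term, and it is worth carrying out explicitly, since it is the only place where octonions are distinguished from a hypothetical sixteen-dimensional normed algebra. With these two repairs the proof is complete and agrees with the classical argument the paper cites.
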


Even though octonions $\mathbb{O\ }$is member of the classification, the
natural extensions of Grassmannians and Steifel manifolds from quaternions are
not available since $\mathbb{O\ }$is nonassociative. In spite of this barrier,
one can still consider projective lines and planes corresponding to
$\mathbb{O}^{2}\ $and $\mathbb{O}^{3}\ $along Hopf fibrations. In this
article, we apply these to study certain subsets of matrices defined over
$\mathbb{O\ }$to construct polygon spaces.
\[
V_{\mathbb{O}}(k,n):=\{X\in\operatorname{Mat}_{k\times n}(\mathbb{O})\mid
XX^{\ast}=I_{k}\}.
\]
Here $V_{\mathbb{O}}(k,n)\ $is considered as a manifold induced from the
imbedding of $\operatorname{Mat}_{k\times n}(\mathbb{O})\ $to real vector space.

\begin{remark}
Since $\mathbb{F}=\mathbb{O\ }$is noncommutative and nonassociative,
$\mathbb{O}^{n}\ $is not a module in general. Even though, we consider
$\mathbb{O}$-multiplication to $\mathbb{O}^{n}$ and the choice is fixed to the
right as $\mathbb{H}^{n}\ $as $\mathbb{H}$-module.
\end{remark}

\section{Hopf Maps}

For $\mathbb{F}=\mathbb{R},\mathbb{C},\mathbb{H}$ and positive integer $n$,
and for $\mathbb{F}=\mathbb{O}$ only when $n=1,2,3$, we consider a map
\begin{align*}
\phi:\mathbb{F}^{n}  &  \rightarrow\mathcal{H}_{n}(\mathbb{F})\\
v  &  \mapsto vv^{\ast}%
\end{align*}
where $\mathcal{H}_{n}(\mathbb{F})$ is space of hermitian $n\times n$ matrices
over $\mathbb{F}$ and $v$ is written as a column. Observe that $\phi
(v)=vv^{\ast}$ is a rank $1$ projection for $v\in S^{dn-1}\subset
\mathbb{F}^{n}$ where $d=\dim_{\mathbb{R}}\mathbb{F}$. This gives the Hopf
fibration:%
\[%
\begin{array}
[c]{cccc}%
\{c\in\mathbb{F}:|c|=1\} & \rightarrow & \{v\in\mathbb{F}^{n}:|v|=1\} &
\subset\mathbb{F}^{n}\\
&  &  & \\
&  & \downarrow\phi & \\
&  &  & \\
&  & \mathbb{F}P^{n-1} &
\end{array}
\]
However, for $\mathbb{F}=\mathbb{O}$, its non-associativity prevents it from
defining an equivalence relation in the same way as $\mathbb{R},\mathbb{C}$
and $\mathbb{H}$. It is well-known that the octonionic projective space
$\mathbb{O}P^{n-1}$ does not exist for $n>3$.

For $\mathbb{F}=\mathbb{R},\mathbb{C},\mathbb{H}$, $\mathbb{O}$, the fibers
are unit spheres and denoted as
\[
\mathbb{F}(1):=\{c\in\mathbb{F}:|c|=1\}=S^{d-1}.
\]
Here the fibers $\mathbb{F}(1)$ have a group structure for $\mathbb{F}%
=\mathbb{R},\mathbb{C},\mathbb{H\ }$but $\mathbb{O}(1)\ $is not a group but a
\textit{Moufang loop} even though $\mathbb{O}(1)\ $is closed under the
multiplication since $\mathbb{O\ }$is not associative.
\[%
\begin{tabular}
[c]{|c||c|c|c|c|}\hline
$\mathbb{F}$ & $\mathbb{R}$ & $\mathbb{C}$ & $\mathbb{H}$ & $\mathbb{O}%
$\\\hline
$\mathbb{F}(1)$ & $%
\begin{array}
[c]{c}%
S^{0}\\
O(1)
\end{array}
$ & $%
\begin{array}
[c]{c}%
S^{1}\\
U(1)\simeq SO\left(  2\right)
\end{array}
$ & $%
\begin{array}
[c]{c}%
S^{3}\\
Sp(1)\simeq SU\left(  2\right)
\end{array}
$ & $S^{7}$\\\hline
\end{tabular}
\]

\subsection{Hopf maps and $\mathbb{R},\mathbb{C},\mathbb{H}$}

The case $n=2$ corresponds to the classical Hopf fibration, which is special
in many ways and has been extensively studied. While we do not discuss it in
detail here, we slightly modify the projection map $\phi$ to fit our purposes.
In \cite{HK}, the Hopf map on quaternion $\mathbb{H}\rightarrow
\operatorname{Im}(\mathbb{H})$ was introduced as part of their study on
polygon spaces in three dimensions. Now, we adjust $\phi$ for the case $n=2$
and $\mathbb{F}=\mathbb{R},\mathbb{C},\mathbb{H}$, and define our version of
Hopf map $\Phi\ $for $\mathbb{F}=\mathbb{R},\mathbb{C},\mathbb{H}$:%
\[%
\begin{array}
[c]{cccc}%
\Phi: & \mathbb{F}^{2} & \rightarrow & \mathcal{H}_{2}^{0}(\mathbb{F})\\
&  &  & \\
& v & \mapsto & vv^{\ast}-\frac{tr(vv^{\ast})}{2}I_{2}%
\end{array}
\]
where $\mathcal{H}_{2}^{0}(\mathbb{F})$ denotes the space of $2\times2$
hermitian traceless matrices over $\mathbb{F\ }$and $v\ $is considered as a
column vector. Explicitly, the image is given by:
\[
\Phi\left(
\begin{pmatrix}
x\\
y
\end{pmatrix}
\right)  =%
\begin{pmatrix}
\frac{|x|^{2}-|y|^{2}}{2} & x\overline{y}\\
y\overline{x} & \frac{|y|^{2}-|x|^{2}}{2}%
\end{pmatrix}
\]
We define an isomorphism $\pi$ to composite with the \textbf{Hopf map} $\Phi$:%
\[%
\begin{array}
[c]{cccc}%
\pi: & \mathcal{H}_{2}^{0}(\mathbb{F}) & \rightarrow & \mathbb{R}%
\oplus\mathbb{F}\\
&
\begin{pmatrix}
\lambda & \alpha\\
\overline{\alpha} & -\lambda
\end{pmatrix}
& \mapsto & (\lambda,\alpha)
\end{array}
.
\]

Since the norms will be used in later, here, we briefly recall the norm
structures on the relevant spaces. The vector spaces $\mathbb{F}%
^{2},\mathcal{H}_{2}^{0}(\mathbb{F})$ and $\mathbb{R}\oplus\mathbb{F}$, which
appear in the definition of $\Phi$ and $\pi$, are equipped with the following
natural norms:
\[
|v|=\sqrt{v^{\ast}v}=\sqrt{tr(vv^{\ast})},\ |A|=\sqrt{\frac{1}{2}tr(A^{\ast
}A)}=\sqrt{\frac{1}{2}tr(A^{2})},\ |(\lambda,\alpha)|=\sqrt{|\lambda
|^{2}+|\alpha|^{2}}%
\]
for $v\in\mathbb{F}^{2},A\in\mathcal{H}_{2}^{0}(\mathbb{F})$ and
$(\lambda,\alpha)\in\mathbb{R}\oplus\mathbb{F}$. These norms are well-behaved
under the maps $\Phi$ and $\pi$, as present in the below Lemma.

\begin{lemma}
\label{norm} For $\mathbb{F}=\mathbb{R},\mathbb{C},\mathbb{H\ }$and the Hopf
map $\Phi$, the norm satisfies the identity $|\Phi(v)|=\frac{1}{2}|v|^{2}$.
Moreover, the isomorphism $\pi:\mathcal{H}_{n}^{0}(\mathbb{F})\rightarrow
\mathbb{R}\oplus\mathbb{F}$ is norm-preserving. i.e., $|A|=|\pi(A)|$ for all
$A\in\mathcal{H}_{n}^{0}(\mathbb{F})$.
\end{lemma}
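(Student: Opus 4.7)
The plan is to reduce both claims to direct matrix computations in coordinates, using only multiplicativity of the norm on $\mathbb{F}$ and the fact that the diagonal entries of matrices in $\mathcal{H}_{2}^{0}(\mathbb{F})$ are real and hence central in $\mathbb{F}$. Since $A \in \mathcal{H}_{2}^{0}(\mathbb{F})$ satisfies $A^{\ast}=A$, the definition $|A|^{2}=\tfrac{1}{2}\mathrm{tr}(A^{\ast}A)$ simplifies to $|A|^{2}=\tfrac{1}{2}\mathrm{tr}(A^{2})$, so both norms I need to control arise from squaring a $2\times 2$ matrix and taking the trace.

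First I would write $v=\begin{pmatrix} x \\ y \end{pmatrix}$, set $\lambda=\tfrac{|x|^{2}-|y|^{2}}{2}$ and $\alpha=x\bar{y}$, so that $\Phi(v)=\begin{pmatrix} \lambda & \alpha \\ \bar{\alpha} & -\lambda \end{pmatrix}$, and compute $\Phi(v)^{2}$. The off-diagonal entries are $\lambda\alpha-\alpha\lambda$ and $\bar{\alpha}\lambda-\lambda\bar{\alpha}$, which vanish because $\lambda\in\mathbb{R}$; the diagonal entries are both $\lambda^{2}+|\alpha|^{2}$, using $\alpha\bar{\alpha}=\bar{\alpha}\alpha=|\alpha|^{2}$ from Lemma \ref{conjugationLemma}. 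Hence $\Phi(v)^{2}=(\lambda^{2}+|\alpha|^{2})I_{2}$ and $|\Phi(v)|^{2}=\lambda^{2}+|\alpha|^{2}$.

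Next I would evaluate this last expression. Multiplicativity of the norm on $\mathbb{F}$ gives $|\alpha|^{2}=|x\bar{y}|^{2}=|x|^{2}|y|^{2}$, while $\lambda^{2}=\tfrac{1}{4}(|x|^{2}-|y|^{2})^{2}$, so the elementary identity $(|x|^{2}-|y|^{2})^{2}+4|x|^{2}|y|^{2}=(|x|^{2}+|y|^{2})^{2}$ yields $\lambda^{2}+|\alpha|^{2}=\tfrac{1}{4}(|x|^{2}+|y|^{2})^{2}=\tfrac{1}{4}|v|^{4}$. Taking the square root produces $|\Phi(v)|=\tfrac{1}{2}|v|^{2}$, which is the first identity.

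For the norm-preservation of $\pi$, the same squaring computation applied to an arbitrary $A=\begin{pmatrix} \lambda & \alpha \\ \bar{\alpha} & -\lambda \end{pmatrix}\in\mathcal{H}_{2}^{0}(\mathbb{F})$ immediately gives $A^{2}=(\lambda^{2}+|\alpha|^{2})I_{2}$, hence $|A|^{2}=\lambda^{2}+|\alpha|^{2}=|(\lambda,\alpha)|^{2}=|\pi(A)|^{2}$. I do not anticipate any real obstacle: the only mildly delicate point is that over $\mathbb{H}$ one must check that non-commutativity does not spoil the square, but this is exactly neutralized by the reality (hence centrality) of $\lambda$ and by $\alpha\bar{\alpha}=\bar{\alpha}\alpha$.
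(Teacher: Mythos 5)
Your proposal is correct and follows essentially the same route as the paper: compute $\tfrac{1}{2}\mathrm{tr}(\Phi(v)^{2})=\lambda^{2}+|x\bar{y}|^{2}$, use multiplicativity of the norm and the identity $(|x|^{2}-|y|^{2})^{2}+4|x|^{2}|y|^{2}=(|x|^{2}+|y|^{2})^{2}$, and observe that $|A|=|\pi(A)|$ by the same trace computation. You merely spell out the details (vanishing of the off-diagonal entries of $A^{2}$ via centrality of $\lambda$, and the quaternionic non-commutativity caveat) that the paper leaves implicit.
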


\begin{proof}
For\ $v=\left(  x\ y\right)  ^{t}$, $|v|^{2}=\left\vert x\right\vert
^{2}+\left\vert y\right\vert ^{2}\ $and
\[
|\Phi(v)|=\sqrt{\frac{1}{2}tr(\Phi(v)^{2})}=\sqrt{\left(  \frac{|x|^{2}%
-|y|^{2}}{2}\right)  ^{2}+\left\vert x\overline{y}\right\vert ^{2}}%
=\frac{|x|^{2}+|y|^{2}}{2},
\]
thus $|\Phi(v)|=\frac{1}{2}|v|^{2}$. For $A\in\mathcal{H}_{n}^{0}(\mathbb{F}%
)$,$\ |A|=|\pi(A)|\ $is clear.\ \ \ $\blacksquare$
\end{proof}

\begin{lemma}
\label{surjection} For $\mathbb{F}=\mathbb{R},\mathbb{C},\mathbb{H}$,

(1) The Hopf map $\Phi$ is surjective.

(2) For each $(\lambda,\alpha)\in\mathbb{R}\oplus\mathbb{F}$,
\[%
\begin{cases}
\Phi^{-1}\left(
\begin{pmatrix}
\lambda & \alpha\\
\overline{\alpha} & -\lambda
\end{pmatrix}
\right)  \simeq\mathbb{F}(1),\;\text{for}\;(\lambda,\alpha)\neq(0,0)\\
\Phi^{-1}\left(
\begin{pmatrix}
0 & 0\\
0 & 0
\end{pmatrix}
\right)  =\left\{
\begin{pmatrix}
0\\
0
\end{pmatrix}
\right\}  ,\;\text{for}\;(\lambda,\alpha)=(0,0)
\end{cases}
\]

\end{lemma}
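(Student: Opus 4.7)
The plan is to reduce the defining equation $\Phi(x,y) = \begin{pmatrix} \lambda & \alpha \\ \overline{\alpha} & -\lambda \end{pmatrix}$ to the two scalar equations $|x|^2 - |y|^2 = 2\lambda$ and $x\overline{y} = \alpha$, and to exploit that $\mathbb{F}$ is a normed division algebra in order to solve them explicitly. Taking norms in the second equation and using Lemma~\ref{OCTBasicLemma}(1) gives $|x|^2 |y|^2 = |\alpha|^2$, so writing $a := |x|^2$, $b := |y|^2$ one has the system $a - b = 2\lambda$, $ab = |\alpha|^2$ with $a, b \ge 0$. This is a scalar quadratic with the unique nonnegative solution $a = \lambda + \sqrt{\lambda^2 + |\alpha|^2}$, $b = -\lambda + \sqrt{\lambda^2 + |\alpha|^2}$.

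For surjectivity, I would construct a preimage by cases. If $\alpha \ne 0$, both $a, b > 0$; pick any $y_0 \in \mathbb{F}$ with $|y_0|^2 = b$ and set $x_0 := \alpha y_0 / b$. Associativity of $\mathbb{F}$ gives $x_0 \overline{y_0} = \alpha (y_0 \overline{y_0}) / b = \alpha$ and $|x_0|^2 = |\alpha|^2 b / b^2 = a$, so $(x_0, y_0)$ is a preimage. If $\alpha = 0$ and $\lambda \ne 0$, the relation $ab = 0$ forces one coordinate to vanish and the other to lie on the nonempty sphere of squared norm $2|\lambda|$. Finally if $(\lambda, \alpha) = (0, 0)$ then $a = b = 0$, forcing $(x, y) = (0, 0)$; this already settles the second clause of (2).

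For the fiber description, I would realize each nonzero fiber as a single orbit of the right $\mathbb{F}(1)$-action $(x, y) \cdot c = (xc, yc)$ on $\mathbb{F}^2$. This action preserves $\Phi$, since $|xc| = |x|$, $|yc| = |y|$, and $(xc) \overline{(yc)} = x (c \overline{c}) \overline{y} = x \overline{y}$ by associativity, and it is free on nonzero vectors by cancellation in the division algebra. Given two preimages $(x_0, y_0)$ and $(x_1, y_1)$ of the same nonzero $(\lambda, \alpha)$ with $b > 0$, set $c := y_0^{-1} y_1$; then $|c| = 1$ since $|y_0| = |y_1|$, and multiplying $x_1 \overline{y_1} = \alpha = x_0 \overline{y_0}$ on the right by $y_1 / b$ and using associativity yields $x_1 = x_0 c$. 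The case $b = 0$ (then $\alpha = 0$ and $\lambda > 0$) is symmetric with the roles of $x$ and $y$ swapped. Together with freeness, this identifies each nonzero fiber with $\mathbb{F}(1)$.

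The main point requiring care is the noncommutativity of $\mathbb{H}$: one must consistently use the right $\mathbb{F}$-module structure on $\mathbb{F}^2$ fixed in the preliminaries and be vigilant about the order of factors in expressions such as $x = \alpha y / |y|^2$, so that $\Phi(xc, yc) = \Phi(x, y)$ follows cleanly from $c \overline{c} = 1$ and associativity.
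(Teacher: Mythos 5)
Your proof is correct and follows essentially the same route as the paper: reduce to the scalar system $|x|^{2}-|y|^{2}=2\lambda$, $x\overline{y}=\alpha$, solve for $|x|^{2}=\lambda+\sqrt{\lambda^{2}+|\alpha|^{2}}$ and $|y|^{2}=-\lambda+\sqrt{\lambda^{2}+|\alpha|^{2}}$, and then use the division-algebra structure to produce and parametrize the preimages. The only difference is presentational: the paper writes the whole fiber at once as the family $\bigl(|x|\,\alpha|\alpha|^{-1}\theta,\,|y|\theta\bigr)$, $\theta\in\mathbb{F}(1)$, whereas you exhibit one preimage and then identify the fiber as a single free right $\mathbb{F}(1)$-orbit, which makes the bijection with $\mathbb{F}(1)$ slightly more explicit.
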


\begin{proof}
To prove the statement $\Phi\left(  \mathbb{F}^{2}\right)  =\mathcal{H}%
_{2}^{0}(\mathbb{F})$, it suffices to find, for any given $(\lambda,\alpha
)\in\mathbb{R}\oplus\mathbb{F}$, a pair $\left(  x,y\right)  \in\mathbb{F}%
^{2}\ $satisfying
\[%
\begin{cases}
x\overline{y}=\alpha\\
\frac{1}{2}(\left\vert x\right\vert ^{2}-\left\vert y\right\vert ^{2})=\lambda
\end{cases}
\]
This system is solvable, and in the process of solving it, we also obtain the
second statement. From these two equations, we derive the following identity:
\[
\left\vert x\right\vert ^{2}+\left\vert y\right\vert ^{2}=2\sqrt{\lambda
^{2}+\left\vert \alpha\right\vert ^{2}}.
\]
Substituting this back into the original equations yields
\[%
\begin{cases}
\left\vert x\right\vert ^{2}=\lambda+\sqrt{\lambda^{2}+\left\vert
\alpha\right\vert ^{2}}\\
\left\vert y\right\vert ^{2}=-\lambda+\sqrt{\lambda^{2}+\left\vert
\alpha\right\vert ^{2}}%
\end{cases}
\]

Now we consider two cases depending on whether $\alpha= 0$ or not.

Case 1: $\alpha\neq0$ ( i.e., $y\neq0$)

We set $y=\left\vert y\right\vert \theta=\sqrt{-\lambda+\sqrt{\lambda
^{2}+\left\vert \alpha\right\vert ^{2}}}\theta$ where $\theta\in\mathbb{F}%
(1)$. Then we have
\[
x=\left\vert x\right\vert \frac{\alpha}{\left\vert \alpha\right\vert }%
\theta=\sqrt{\lambda+\sqrt{\lambda^{2}+\left\vert \alpha\right\vert ^{2}}%
}\frac{\alpha}{\left\vert \alpha\right\vert }\theta
\]
Therefore, when $\alpha\neq0$, a solution is given by
\[
(x,y)=\left(  \sqrt{\lambda+\sqrt{\lambda^{2}+\left\vert \alpha\right\vert
^{2}}}\frac{\alpha}{\left\vert \alpha\right\vert }\theta,\sqrt{-\lambda
+\sqrt{\lambda^{2}+\left\vert \alpha\right\vert ^{2}}}\theta\right)
,\theta\in\mathbb{F}(1)
\]

Case 2: $\alpha=0$

In this case, the system is reduced to
\[%
\begin{cases}
x\overline{y}=0\\
\frac{1}{2}(\left\vert x\right\vert ^{2}-\left\vert y\right\vert ^{2})=\lambda
\end{cases}
\]
Solving this together with the identity
\[
\left\vert x\right\vert ^{2}+\left\vert y\right\vert ^{2}=2\sqrt{\lambda^{2}%
}=2|\lambda|,
\]
we obtain
\[%
\begin{cases}
\left\vert x\right\vert =\lambda+|\lambda|\\
\left\vert y\right\vert =-\lambda+|\lambda|.
\end{cases}
\]
The solutions are given as\ for$\ \theta\in\mathbb{F}(1)$
\[
(x,y)=\left\{
\begin{array}
[c]{c}%
\left(  2\lambda\theta,0\right)  \ \text{if }\lambda>0\text{ }\\
\left(  0,0\right)  \ \ \text{if }\lambda=0\\
\left(  0,-2\lambda\theta\right)  \ \text{if }\lambda<0\text{ }%
\end{array}
\right.  \text{for}\ \theta\in\mathbb{F}(1).
\]

This proves the lemma.\ \ \ \ $\blacksquare$
\end{proof}

The Lemma \ref{surjection}\ implies that for each pair of $\left(  x,y\right)
,\left(  a,b\right)  \in\mathbb{F}^{2}$, if such that $\Phi\left(  \left(
x,y\right)  \right)  =\Phi\left(  \left(  a,b\right)  \right)  \ $then
\[
\left\vert x\right\vert =\left\vert a\right\vert \text{, }\left\vert
y\right\vert =\left\vert b\right\vert \text{,}%
\]
and there is $c\in\mathbb{F}(1)$ such that $\left(  x,y\right)  =\left(
ac,bc\right)  $. In fact, by considering a right group action of
$\mathbb{F}(1)$ on $\mathbb{F}^{2}$, defined by%
\[%
\begin{array}
[c]{cccc}%
\cdot: & \mathbb{F}^{2}\times\mathbb{F}(1) & \rightarrow & \mathbb{F}^{2}\\
& \left(
\begin{pmatrix}
x\\
y
\end{pmatrix}
,c\right)  & \mapsto &
\begin{pmatrix}
x\\
y
\end{pmatrix}
\cdot c:=%
\begin{pmatrix}
xc\\
yc
\end{pmatrix}
,
\end{array}
\]
the group action preserves the fibers of the Hopf map\ $\Phi$:
\[
\Phi\left(
\begin{pmatrix}
x\\
y
\end{pmatrix}
\cdot c\right)  =\Phi\left(
\begin{pmatrix}
x\\
y
\end{pmatrix}
\right)
\]
since $\left\vert xc\right\vert =\left\vert x\right\vert \,$, $\left\vert
yc\right\vert =\left\vert y\right\vert \ $and $\left(  xc\right)  \left(
\overline{yc}\right)  =x\left(  c\bar{c}\right)  y=xy$. Note we need the
associative condition in the last one.

From the $\mathbb{F}(1)$-action on $\mathbb{F}^{2}$, we consider the orbits
$\mathbb{F}^{2}/\mathbb{F}(1)\ $and related map $p:\mathbb{F}^{2}%
\rightarrow\mathbb{F}^{2}/\mathbb{F}(1)$ and the following commutative diagram
to define a map $\widetilde{\Phi}$ satisfying $\Phi=\widetilde{\Phi}\circ p$.%

\[%
\begin{matrix}
\Phi: & \mathbb{F}^{2}\smallsetminus\left\{  0\right\}  & \longrightarrow &
\mathcal{H}_{2}^{0}(\mathbb{F})\smallsetminus\left\{  0\right\} \\
&  &  & \\
& p\big\downarrow & \nearrow & \widetilde{\Phi}\\
&  &  & \\
& \mathbb{F}^{2}\smallsetminus\left\{  0\right\}  /\mathbb{F}(1) &  &
\end{matrix}
\]
Here, $\widetilde{\Phi}$ is well defined and a bijection between
$\mathbb{F}^{2}\smallsetminus\left\{  0\right\}  /\mathbb{F}(1)\simeq$
$\mathbb{FP}^{1}$and $\mathcal{H}_{2}^{0}(\mathbb{F})\smallsetminus\left\{
0\right\}  $.

\subsection{Hopf maps and $\mathbb{O}$}

As $\mathbb{F}=\mathbb{R},\mathbb{C},\mathbb{H}$, we also define Hopf map
$\Phi_{\mathbb{O}}\ $for octonions $\mathbb{O}$:%
\[%
\begin{array}
[c]{cccc}%
\Phi: & \mathbb{O}^{2} & \rightarrow & \mathcal{H}_{2}^{0}(\mathbb{O})\\
& v & \mapsto & vv^{\ast}-\frac{tr(vv^{\ast})}{2}I_{2}%
\end{array}
\]
where $\mathcal{H}_{2}^{0}(\mathbb{O})$ denotes the space of $2\times2$
hermitian traceless matrices over $\mathbb{O\ }$and $v\ $is considered as a
column vector. Explicitly, the image is given by:
\[
\Phi_{\mathbb{O}}\left(
\begin{pmatrix}
x\\
y
\end{pmatrix}
\right)  =%
\begin{pmatrix}
\frac{|x|^{2}-|y|^{2}}{2} & x\bar{y}\\
y\bar{x} & \frac{|y|^{2}-|x|^{2}}{2}%
\end{pmatrix}
.
\]

The isomorphism $\pi\ $is also defined to composite with the \textbf{Hopf map
}$\Phi:$
\[%
\begin{array}
[c]{cccc}%
\pi: & \mathcal{H}_{2}^{0}(\mathbb{O}) & \rightarrow & \mathbb{R}%
\oplus\mathbb{O}\\
&
\begin{pmatrix}
\lambda & \alpha\\
\overline{\alpha} & -\lambda
\end{pmatrix}
& \mapsto & (\lambda,\alpha)
\end{array}
\]
And we consider
\[
\mathbb{O}(1)=\{c\in\mathbb{O}:|c|=1\}=S^{7}%
\]
which is not a group but a Moufang loop as it satisfies axioms of the group
except the associativity.

By performing similar argument to Lemma \ref{surjection}, we have the
following Lemma for $\mathbb{O}$.

\begin{lemma}
\label{SurjectivOctonions}For octonions $\mathbb{O}$,

(1) The Hopf map $\Phi_{\mathbb{O}}$ is surjective.

(2) For each $(\lambda,\alpha)\in\mathbb{R}\oplus\mathbb{O}$,
\[%
\begin{cases}
\Phi_{\mathbb{O}}^{-1}\left(
\begin{pmatrix}
\lambda & \alpha\\
\overline{\alpha} & -\lambda
\end{pmatrix}
\right)  \simeq\mathbb{O}(1),\;\text{for}\;(\lambda,\alpha)\neq(0,0)\\
\Phi_{\mathbb{O}}^{-1}\left(
\begin{pmatrix}
0 & 0\\
0 & 0
\end{pmatrix}
\right)  =\left\{
\begin{pmatrix}
0\\
0
\end{pmatrix}
\right\}  ,\;\text{for}\;(\lambda,\alpha)=(0,0)
\end{cases}
\]

\end{lemma}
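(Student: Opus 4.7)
The plan is to parallel the proof of Lemma \ref{surjection} almost verbatim, replacing each appeal to associativity by a targeted application of the alternative identities of Lemma \ref{ConjugationLemma2}. Writing $v=(x,y)^{t}\in\mathbb{O}^{2}$, the equation $\Phi_{\mathbb{O}}(v) = (\lambda,\alpha)$ decomposes, as before, into the two scalar conditions $x\bar{y}=\alpha$ and $\tfrac{1}{2}(|x|^{2}-|y|^{2})=\lambda$. Taking norms in the first and combining with the second yields $|x|^{2}+|y|^{2}=2\sqrt{\lambda^{2}+|\alpha|^{2}}$, so $|x|^{2}$ and $|y|^{2}$ are individually pinned down by $(\lambda,\alpha)$.

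For surjectivity, when $\alpha\neq 0$ I would propose the candidate $(x,y)=\bigl(r\tfrac{\alpha}{|\alpha|}\theta,\ s\theta\bigr)$ with $r=\sqrt{\lambda+\sqrt{\lambda^{2}+|\alpha|^{2}}}$, $s=\sqrt{-\lambda+\sqrt{\lambda^{2}+|\alpha|^{2}}}$ and arbitrary $\theta\in\mathbb{O}(1)$; note $rs=|\alpha|$. The norm conditions are immediate, so the only thing to check is $x\bar{y}=\alpha$, and after pulling out the real scalars this reduces to $\bigl(\tfrac{\alpha}{|\alpha|}\theta\bigr)\bar{\theta}=\tfrac{\alpha}{|\alpha|}$. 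This is exactly the identity $(u\bar{w})w=u\|w\|^{2}$ of Lemma \ref{ConjugationLemma2}, applied with $w=\bar{\theta}$, so associativity is never invoked. The degenerate cases $\alpha=0$ with $\lambda>0$, $\lambda<0$, or $\lambda=0$ force $y=0$, $x=0$, or $x=y=0$ respectively, and are trivial.

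For the fiber description I would show that the map $\theta\mapsto(x,y)$ above is a bijection from $\mathbb{O}(1)$ onto $\Phi_{\mathbb{O}}^{-1}((\lambda,\alpha))$ whenever $(\lambda,\alpha)\neq 0$. Given any $(x,y)$ in the fiber with $\alpha\neq 0$, we must have $y\neq 0$, so set $\theta:=y/|y|\in\mathbb{O}(1)$. To recover $x$ from $\theta$, I would multiply the constraint $x\bar{y}=\alpha$ on the right by $y$ and again invoke $(x\bar{y})y=x|y|^{2}$ from Lemma \ref{ConjugationLemma2}, obtaining $x|y|^{2}=\alpha y$, hence $x=\alpha y/|y|^{2}$, which coincides with $r\tfrac{\alpha}{|\alpha|}\theta$ since $rs=|\alpha|$ and the positive real scalar $|\alpha|$ passes freely through. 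The parametrization is therefore bijective. In the degenerate cases $\alpha=0$, $\lambda\neq 0$, only one of $x,y$ is nonzero of a fixed norm, giving a $7$-sphere worth of parameters, i.e.\ $\mathbb{O}(1)$, and $(\lambda,\alpha)=(0,0)$ forces $x=y=0$.

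The main obstacle compared with the $\mathbb{F}=\mathbb{R},\mathbb{C},\mathbb{H}$ case is precisely that the naive right $\mathbb{O}(1)$-multiplication on $\mathbb{O}^{2}$ no longer preserves fibers of $\Phi_{\mathbb{O}}$ (the computation $(xc)(\overline{yc})=x(c\bar{c})\bar{y}$ used in Lemma \ref{surjection} requires associativity, which is exactly what fails in $\mathbb{O}$; the author already flags this in the introduction). Consequently, the bijection between the fiber and $\mathbb{O}(1)$ cannot be produced as a group orbit; instead, $x$ must be reconstructed from the single parameter $\theta=y/|y|$ via the alternative identity $(x\bar{y})y=x|y|^{2}$. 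The verification that this explicit reconstruction is well defined and matches the candidate $r\tfrac{\alpha}{|\alpha|}\theta$ is the only place where the nonassociative nature of $\mathbb{O}$ genuinely complicates the argument.
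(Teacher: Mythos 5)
Your proposal is correct and follows essentially the same route as the paper, which simply repeats the argument of Lemma \ref{surjection} and records (in the remark following the lemma) exactly your parametrization $(x,y)=\bigl(|x|\,|\alpha|^{-1}\alpha\theta,\ |y|\theta\bigr)$, $\theta\in\mathbb{O}(1)$, of the generic fiber. The only difference is that you make explicit the two places where associativity must be replaced by the alternative identities $(u\bar{w})w=u\|w\|^{2}$ and $(x\bar{y})y=x\|y\|^{2}$ of Lemma \ref{ConjugationLemma2}, a verification the paper leaves implicit; your observation that the naive right $\mathbb{O}(1)$-action fails to preserve fibers is likewise consistent with the paper's Remark \ref{MoufangRmk} and its subsequent introduction of the modified multiplication.
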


\begin{remark}
Here we solve%
\[%
\begin{cases}
x\bar{y}=\alpha\\
\frac{1}{2}(\left\vert x\right\vert ^{2}-\left\vert y\right\vert ^{2})=\lambda
\end{cases}
\]
for octonions $\mathbb{O}$. When $\alpha\neq0\,$, the solutions of the system
can be written
\[
(x,y)=(\left\vert x\right\vert \left\vert \alpha\right\vert ^{-1}\alpha
\theta,\left\vert y\right\vert \theta)\text{,\ }\theta\in\mathbb{O}(1)
\]
and one may consider typical $\mathbb{O}(1)$-multiplication to $\Phi
_{\mathbb{O}}^{-1}\left(
\begin{pmatrix}
\lambda & \alpha\\
\overline{\alpha} & -\lambda
\end{pmatrix}
\right)  \ $to the right as
\[
(xc,yc)=(\left\vert x\right\vert \left\vert \alpha\right\vert ^{-1}\left(
\alpha\theta\right)  c,\left\vert y\right\vert \theta c)
\]
for $c\in\mathbb{O}(1)$. But there is an issue of associativity to get
$\left\vert x\right\vert \left\vert \alpha\right\vert ^{-1}\alpha\left(
c\theta\right)  \ $with respect to $\theta\in\mathbb{O}(1)\ $related to
$\Phi_{\mathbb{O}}^{-1}\left(
\begin{pmatrix}
\lambda & \alpha\\
\overline{\alpha} & -\lambda
\end{pmatrix}
\right)  $. For this reason, we introduce an alternative $\mathbb{O}%
(1)$-multiplication to $\mathbb{O}^{2}$.
\end{remark}

\bigskip

\begin{center}
\textbf{ Multiplication of }$\mathbb{O}(1)\ $\textbf{to }$\mathbb{O}^{2}$
\end{center}

Even though $\mathbb{O}(1)\ $is not a group but a Moufang loop, we define
$\mathbb{O}(1)$-multiplication to $\mathbb{O}^{2}\ $%

\[%
\begin{array}
[c]{cccc}%
\cdot: & \mathbb{O}^{2}\times\mathbb{O}(1) & \rightarrow & \mathbb{O}^{2}\\
& \left(
\begin{pmatrix}
x\\
y
\end{pmatrix}
,c\right)  & \mapsto &
\begin{pmatrix}
x\\
y
\end{pmatrix}
\cdot c:=\left\{
\begin{array}
[c]{c}%
\left(
\begin{array}
[c]{c}%
\left(  xy^{-1}\right)  \left(  yc\right) \\
yc
\end{array}
\right)  \ \text{if }y\neq0\\
\left(
\begin{array}
[c]{c}%
xc\\
0
\end{array}
\right)  \ \text{if }y=0
\end{array}
\right.  .
\end{array}
\]
Here $\left(  xy^{-1}\right)  \left(  yc\right)  =xc\ $under the associative
cases. Note that the $\mathbb{O}(1)$-multiplication to $\mathbb{O}^{2}\ $is to
the left unlike $\mathbb{F}=\mathbb{R},\mathbb{C},\mathbb{H\ }$and different
definition with \cite{Or} to study the Hopf fibration for $\mathbb{O}$. Now
the $\mathbb{O}(1)$-multiplication to $\mathbb{O}^{2}\ $is compatible to
$\mathbb{O}(1)$-family $\Phi_{\mathbb{O}}^{-1}\left(
\begin{pmatrix}
\lambda & \alpha\\
\overline{\alpha} & -\lambda
\end{pmatrix}
\right)  \ $in the Lemma \ref{SurjectivOctonions}\ in the following Lemma.

\begin{lemma}
\label{OctonionFiberLemma} For $%
\begin{pmatrix}
x\\
y
\end{pmatrix}
,%
\begin{pmatrix}
a\\
b
\end{pmatrix}
$ $\in\mathbb{O}^{2}\smallsetminus\left\{
\begin{pmatrix}
0\\
0
\end{pmatrix}
\right\}  $,
\[
\Phi_{\mathbb{O}}\left(
\begin{pmatrix}
x\\
y
\end{pmatrix}
\right)  =\Phi_{\mathbb{O}}\left(
\begin{pmatrix}
a\\
b
\end{pmatrix}
\right)  \ \text{if and only if there is\ }c\in\mathbb{O}(1)\mathbb{\ }%
\text{such that }%
\begin{pmatrix}
a\\
b
\end{pmatrix}
=%
\begin{pmatrix}
x\\
y
\end{pmatrix}
\cdot c\text{.}%
\]

\end{lemma}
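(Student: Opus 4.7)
My strategy is to handle both implications case-by-case on whether $y=0$, using the alternative identities of Lemmas~\ref{OCTBasicLemma} and~\ref{ConjugationLemma2} as the main tool, since naive cancellation in $\mathbb{O}$ is not available. A first step I would record is a vanishing parity: if $\Phi_{\mathbb{O}}\begin{pmatrix}x\\y\end{pmatrix}=\Phi_{\mathbb{O}}\begin{pmatrix}a\\b\end{pmatrix}$, then $y=0$ iff $b=0$. Indeed, matching the off-diagonal entry gives $x\bar{y}=a\bar{b}$, and if $y\neq 0$ while $b=0$, the division property forces $x=0$, whence the diagonal entry would give $|y|=0$, contradicting $(x,y)\neq(0,0)$. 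Hence the two branches of the definition of $\begin{pmatrix}x\\y\end{pmatrix}\cdot c$ are matched on the two sides, and the cases can be treated separately.

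For the ``if'' direction with $y\neq 0$, I would set $a:=(xy^{-1})(yc)$ and $b:=yc$, and verify the three entries of $\Phi_{\mathbb{O}}$ one at a time. Multiplicativity of the norm gives $|b|^{2}=|y|^{2}$ and $|a|^{2}=|xy^{-1}|^{2}|yc|^{2}=|x|^{2}$, so the diagonal entry is preserved. For the off-diagonal, the identity $(u\bar{v})v=u|v|^{2}$ from Lemma~\ref{ConjugationLemma2}, specialised to $v=\overline{yc}$, gives $(uz)\bar{z}=u|z|^{2}$; applying this with $u=xy^{-1}$ and $z=yc$ yields
\[
a\bar{b}=\bigl((xy^{-1})(yc)\bigr)\overline{yc}=(xy^{-1})|yc|^{2}=(xy^{-1})|y|^{2}=x\bar{y},
\]
where I factor the real scalar $|y|^{2}$ through the product and use $y^{-1}|y|^{2}=\bar{y}$. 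The case $y=0$ is immediate since then $b=0$ and $|xc|=|x|$.

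For the converse with $y\neq 0$ (hence $b\neq 0$), I would define the candidate
\[
c:=\bar{y}b/|y|^{2},
\]
and check $|c|^{2}=|\bar{y}b|^{2}/|y|^{4}=|b|^{2}/|y|^{2}=1$, using $|b|=|y|$ from the preimage analysis in Lemma~\ref{SurjectivOctonions}. The equality $yc=b$ is exactly the identity $y(\bar{y}b)=|y|^{2}b$ from Lemma~\ref{ConjugationLemma2}, after pulling the real scalar $1/|y|^{2}$ through. To match $(xy^{-1})(yc)$ with $a$, I would first multiply the relation $a\bar{b}=x\bar{y}$ on the right by $b$ and invoke $(a\bar{b})b=a|b|^{2}$ to deduce $a=(x\bar{y})b/|y|^{2}$; substituting $yc=b$ then gives $(xy^{-1})(yc)=(x\bar{y}/|y|^{2})b=a$. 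The residual case $y=b=0$ is analogous, with $c:=\bar{x}a/|x|^{2}$ and the same identity applied to $x$.

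The main obstacle is the bookkeeping of associativity: every ``cancellation'' of $y$ with $y^{-1}$ or $\bar{y}$ must be recognised as a specific alternative-law instance rather than an appeal to associativity. The definition of the left $\mathbb{O}(1)$-multiplication on $\mathbb{O}^{2}$ is tailored precisely so that all such cancellations reduce to the two identities $y(\bar{y}b)=|y|^{2}b$ and $(u\bar{v})v=u|v|^{2}$; once these are isolated, the remaining computation is short.
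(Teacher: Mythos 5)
Your proposal is correct and follows essentially the same route as the paper: both directions are handled by the same case split on $y=0$, the forward direction reduces to norm multiplicativity plus the identity $(uz)\bar z=u|z|^2$ from Lemma~\ref{ConjugationLemma2}, and the converse recovers $c$ from $b=yc$ and $a=(a\bar b)b|b|^{-2}$ exactly as in the paper. Your explicit parity check ($y=0$ iff $b=0$) and explicit formula $c=\bar y b/|y|^2$ merely make precise steps the paper leaves as ``clear.''
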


\begin{proof}
Assume there is $c\in\mathbb{O}(1)\mathbb{\ }$such that $%
\begin{pmatrix}
a\\
b
\end{pmatrix}
=%
\begin{pmatrix}
x\\
y
\end{pmatrix}
\cdot c$, we check%
\[
\left\vert a\right\vert ^{2}-\left\vert b\right\vert ^{2}=\left\vert
x\right\vert ^{2}-\left\vert y\right\vert ^{2},a\bar{b}=x\bar{y}\text{.}%
\]
If $\left\vert y\right\vert \neq0$,
\[
\text{ }%
\begin{pmatrix}
a\\
b
\end{pmatrix}
=%
\begin{pmatrix}
\left(  xy^{-1}\right)  \left(  yc\right) \\
yc
\end{pmatrix}
\]
and
\begin{align*}
\left\vert a\right\vert ^{2}-\left\vert b\right\vert ^{2}  &  =\left\vert
\left(  xy^{-1}\right)  \left(  yc\right)  \right\vert ^{2}-\left\vert
yc\right\vert ^{2}\\
&  =\left(  \left\vert x\right\vert \left\vert y\right\vert ^{-1}\left\vert
y\right\vert \left\vert c\right\vert \right)  ^{2}-\left(  \left\vert
y\right\vert \left\vert c\right\vert \right)  ^{2}=\left\vert x\right\vert
^{2}-\left\vert y\right\vert ^{2}\\
a\bar{b}  &  =\left(  \left(  xy^{-1}\right)  \left(  yc\right)  \right)
\overline{\left(  yc\right)  }=\left(  xy^{-1}\right)  \left\vert
yc\right\vert ^{2}\ \text{by Lemma \ref{ConjugationLemma2}}\\
&  =x\bar{y}^{-1}\left\vert y\right\vert ^{2}\left\vert c\right\vert
^{2}=x\bar{y}.
\end{align*}
Similarly, case $\left\vert y\right\vert =0\ $is clear.

Assume $\Phi_{\mathbb{O}}\left(
\begin{pmatrix}
x\\
y
\end{pmatrix}
\right)  =\Phi_{\mathbb{O}}\left(
\begin{pmatrix}
a\\
b
\end{pmatrix}
\right)  $ then as the argument of Lemma \ref{SurjectivOctonions}, we have
\[
\left\vert a\right\vert =\left\vert x\right\vert ,\left\vert b\right\vert
=\left\vert y\right\vert .
\]
If $\left\vert y\right\vert \neq0$, there is $c\in\mathbb{O}(1)~$such that
$b=yc$. Moreover,
\[
a=\alpha b\left\vert b\right\vert ^{-2}=\left(  x\bar{y}\right)  \left(
yc\right)  \left\vert yc\right\vert ^{-2}=\left(  \left\vert y\right\vert
^{-2}x\bar{y}\right)  \left(  yc\right)  =\left(  xy^{-1}\right)  \left(
yc\right)  \text{.}%
\]
Similarly, case $\left\vert y\right\vert =0\ $is clear.\ \ \ $\blacksquare$
\end{proof}

The Lemma \ref{OctonionFiberLemma} implies that $\Phi_{\mathbb{O}}^{-1}\left(
\begin{pmatrix}
\lambda & \alpha\\
\overline{\alpha} & -\lambda
\end{pmatrix}
\right)  \ $can be considered as an orbit of $\mathbb{O}(1)$-multiplication to
$\mathbb{O}^{2}$. Thus we also consider the orbits $\mathbb{O}^{2}%
/\mathbb{O}(1)\ $and related map $p:\mathbb{O}^{2}\rightarrow\mathbb{O}%
^{2}/\mathbb{O}(1)$ and the following commutative diagram to define a map
$\widetilde{\Phi}_{\mathbb{O}}$ satisfying $\Phi_{\mathbb{O}}=\widetilde{\Phi
}_{\mathbb{O}}\circ p$.%

\[%
\begin{matrix}
\Phi_{\mathbb{O}}: & \mathbb{O}^{2}\smallsetminus\left\{  0\right\}  &
\longrightarrow & \mathcal{H}_{2}^{0}(\mathbb{O})\smallsetminus\left\{
0\right\} \\
&  &  & \\
& p\ \big\downarrow & \nearrow & \widetilde{\Phi}_{\mathbb{O}}\\
&  &  & \\
& \mathbb{O}^{2}\smallsetminus\left\{  0\right\}  /\mathbb{O}(1) &  &
\end{matrix}
\]
Here, $\widetilde{\Phi}$ is well defined and a bijection between
$\mathbb{O}^{2}\smallsetminus\left\{  0\right\}  /\mathbb{O}(1)\simeq$
$\mathbb{OP}^{1}$and $\mathcal{H}_{2}^{0}(\mathbb{O})\smallsetminus\left\{
0\right\}  $.

\bigskip

\section{Spin representation for Hopf maps}

\subsection{Special Unitary group $SU(2,\mathbb{F})\ $for $\mathbb{F}%
=\mathbb{R},\mathbb{C},\mathbb{H}$}

In this subsection, we consider $\mathbb{F}=\mathbb{R},\mathbb{C},\mathbb{H}$.
We extend the standard definition of the special unitary group $SU(2)$ to the
cases $\mathbb{F}=\mathbb{R},\mathbb{C},\mathbb{H}$ by defining
\[
SU(2,\mathbb{F})=\{A\in Mat_{2}(\mathbb{F}):AA^{\ast}=I=A^{\ast}A,\det A=1\}.
\]
In the case of quaternions $\mathbb{F}=\mathbb{H}$, the unitary condition
implies that $\det A=1$. Therefore, the corresponding groups are:
\[%
\begin{tabular}
[c]{|c||c|c|c|}\hline
$\mathbb{F}$ & $\mathbb{R}$ & $\mathbb{C}$ & $\mathbb{H}$\\\hline
$SU(2,\mathbb{F})$ & $SO(2)$ & $SU(2)$ & $Sp(2)$\\\hline
\end{tabular}
\]

It is well known that these Lie groups are isomorphic to the corresponding
real spin groups. Let us recall the real spin groups $Spin(n)$ in low
dimensions $n=2,3,5$. Then, the groups $Spin(n)$ are isomorphic to
$SU(2,\mathbb{F})$ for $\mathbb{F}=\mathbb{R},\mathbb{C},\mathbb{H}$,
respectively. Moreover, their irreducible real spin representations are given
by $\mathbb{F}^{2}$, known as the spinor spaces. In these cases, the spin
action can be explicitly realized via the standard representation of
$SU(2,\mathbb{F})$. Such a direct realization of the spin representation is
usually not available in general dimensions. These are corresponded to another
group action of $SU(2,\mathbb{F})$, which is the adjoint action on
$\mathcal{H}_{2}^{0}(\mathbb{F})$, defined by
\[
Ad_{A}(X)=AXA^{\ast}%
\]
where $A\in SU(2,\mathbb{F})$ and $X\in\mathcal{H}_{2}^{0}(\mathbb{F})$. This
action is obviously well-defined for $\mathbb{R}$ and $\mathbb{C}$, but one
must be cautious in the quaternionic case. Since conjugate transpose property
still holds for $\mathbb{H}$, $Ad_{A}(X)$ is hermitian.
\[
(Ad_{A}(X))^{\ast}=(AXA^{\ast})^{\ast}=AX^{\ast}A^{\ast}=AXA^{\ast}=Ad_{A}(X)
\]

\begin{lemma}
\label{HtoC} There is a group homomorphism $h:M_{n}(\mathbb{H})\rightarrow
M_{2n}(\mathbb{C})$ satisfying
\[
h(A^{\ast})=h(A)^{\ast},\quad\text{and}\quad tr\left(  h(A)\right)
=2\operatorname{Re}tr(A).
\]

\end{lemma}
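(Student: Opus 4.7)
The plan is to construct $h$ explicitly using the $\mathbb{C}$-linear decomposition $\mathbb{H}=\mathbb{C}\oplus\mathbb{C}j$, so that every quaternion is uniquely $a+bj$ with $a,b\in\mathbb{C}$. Extending this entrywise, any $A\in M_{n}(\mathbb{H})$ is uniquely written as $A=A_{1}+A_{2}j$ with $A_{1},A_{2}\in M_{n}(\mathbb{C})$. I would then define
\[
h(A):=\begin{pmatrix} A_{1} & A_{2}\\ -\overline{A_{2}} & \overline{A_{1}} \end{pmatrix}\in M_{2n}(\mathbb{C}),
\]
where the overline denotes entrywise complex conjugation. Equivalently, this is the $\mathbb{C}$-linear map on $\mathbb{H}^{n}\simeq\mathbb{C}^{2n}$ (as a right $\mathbb{H}$-module, hence a right $\mathbb{C}$-module) obtained by viewing left multiplication by $A$ as a $\mathbb{C}$-linear transformation, which makes it transparent that $h$ is at least $\mathbb{R}$-linear and unital.

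To verify multiplicativity, I would use the commutation rule $jc=\bar{c}j$ for $c\in\mathbb{C}$, which extends entrywise to $jB=\overline{B}\,j$ for any $B\in M_{n}(\mathbb{C})$. Expanding
\[
(A_{1}+A_{2}j)(B_{1}+B_{2}j)=A_{1}B_{1}+A_{1}B_{2}j+A_{2}(jB_{1})+A_{2}(jB_{2})j=(A_{1}B_{1}-A_{2}\overline{B_{2}})+(A_{1}B_{2}+A_{2}\overline{B_{1}})j
\]
and comparing with the block product $h(A)h(B)$ shows the two coincide. Thus $h$ is a ring (hence group) homomorphism.

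For the adjoint identity, conjugation on $\mathbb{H}$ acts as $\overline{a+bj}=\bar{a}-bj$ on scalars (using $\overline{xy}=\bar{y}\bar{x}$ and $\bar{j}=-j$), which combined with the transpose gives $A^{\ast}=\overline{A_{1}}^{T}+(-A_{2})^{T}j=\overline{A_{1}}^{T}-A_{2}^{T}j$. Substituting into the formula yields
\[
h(A^{\ast})=\begin{pmatrix} \overline{A_{1}}^{T} & -A_{2}^{T}\\ \overline{A_{2}}^{T} & A_{1}^{T} \end{pmatrix},
\]
which is exactly $h(A)^{\ast}$ computed by block-transposing and entrywise conjugating. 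The trace identity is immediate once the block shape is in hand: $\operatorname{tr} h(A)=\operatorname{tr} A_{1}+\operatorname{tr}\overline{A_{1}}=2\operatorname{Re}\operatorname{tr} A_{1}$, and since the $j$-component of any diagonal entry of $A$ contributes nothing to $\operatorname{Re}\operatorname{tr} A$, we have $\operatorname{Re}\operatorname{tr} A=\operatorname{Re}\operatorname{tr} A_{1}$. There is no deep obstacle here; the only care required is the sign bookkeeping when moving between $j$-conjugation, complex conjugation, and transpose, all of which is governed by the single relation $jc=\bar{c}j$.
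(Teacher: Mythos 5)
Your construction is exactly the one in the paper: decompose $A=A_{1}+A_{2}j$ and map it to the block matrix $\left(\begin{smallmatrix} A_{1} & A_{2}\\ -\overline{A_{2}} & \overline{A_{1}} \end{smallmatrix}\right)$, then check the adjoint and trace identities by direct computation. Your verification is correct and in fact slightly more complete than the paper's, since you also check multiplicativity via $jc=\bar{c}j$, which the paper asserts without proof.
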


\begin{proof}
We consider quaternionic matrix $A\in M_{n}(\mathbb{H})\ $as
\[
A=A_{1}+A_{2}j
\]
for $A_{1}$,$\ A_{2}\in M_{n}(\mathbb{C})$ with respect to the identification
$q=z_{1}+z_{2}j\in\mathbb{H\ }$for $z_{1},z_{2}\in\mathbb{C}$. And we define
the group homomorphism
\[%
\begin{array}
[c]{cccc}%
h: & M_{n}(\mathbb{H}) & \rightarrow & M_{2n}(\mathbb{C})\\
& A & \longmapsto & \left(
\begin{array}
[c]{cc}%
A_{1} & A_{2}\\
-\bar{A}_{2} & \bar{A}_{1}%
\end{array}
\right)  .
\end{array}
\]
Then
\[
h(A^{\ast})=h(A_{1}^{\ast}-A_{2}^{t}j)=\left(
\begin{array}
[c]{cc}%
A_{1}^{\ast} & -A_{2}^{t}\\
\bar{A}_{2}^{t} & \overline{\left(  A_{1}^{\ast}\right)  }%
\end{array}
\right)  =\left(
\begin{array}
[c]{cc}%
A_{1} & A_{2}\\
-\bar{A}_{2} & \overline{A_{1}}%
\end{array}
\right)  ^{\ast}=h(A)^{\ast}\text{,}%
\]
and
\[
tr\left(  h(A)\right)  =tr\left(
\begin{array}
[c]{cc}%
A_{1} & A_{2}\\
-\bar{A}_{2} & \bar{A}_{1}%
\end{array}
\right)  =\left(  trA_{1}+tr\bar{A}_{1}\right)  =2\operatorname{Re}tr(A).
\]

\ $\blacksquare$
\end{proof}

\bigskip

Although the cyclic property of the trace does not hold in general for
quaternionic matrices due to non-commutativity, we can verify the invariance
of the trace under the adjoint action via the homomorphism $h$, using the
cyclicity of the trace in the complex setting.

\begin{lemma}
The trace on $\mathcal{H}_{2}(\mathbb{H})$ is invariant under the adjoint
action of $SU(2,\mathbb{H})$, i.e.,
\[
tr(AXA^{\ast})=tr(X),\quad\text{for all }A\in SU(2,\mathbb{H}),X\in
\mathcal{H}_{2}^{0}(\mathbb{H}).
\]

\end{lemma}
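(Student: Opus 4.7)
The plan is to transport the problem from quaternionic matrices, where the trace lacks a cyclic property, into complex matrices via the homomorphism $h$ of Lemma \ref{HtoC}, and then exploit the usual cyclicity of the complex trace.

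First I would observe that since $X \in \mathcal{H}_2^0(\mathbb{H})$ is Hermitian, its diagonal entries satisfy $\overline{x}_{ii} = x_{ii}$ and hence lie in $\mathbb{R}$; the same is true of $AXA^*$, which is Hermitian by the computation already displayed. Therefore both $\operatorname{tr}(X)$ and $\operatorname{tr}(AXA^*)$ are real, and it suffices to prove the equality of their real parts.

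Next I would apply $h$ to both sides. Since $h$ is multiplicative and satisfies $h(A^*) = h(A)^*$, we have
\[
h(AXA^*) = h(A)\,h(X)\,h(A)^*.
\]
The unitarity $AA^* = I$ in $M_2(\mathbb{H})$ transports to $h(A)h(A)^* = h(I) = I_4$ in $M_4(\mathbb{C})$, so $h(A) \in U(4)$. Then the standard cyclicity of the complex trace gives
\[
\operatorname{tr}\bigl(h(A)\,h(X)\,h(A)^*\bigr) = \operatorname{tr}\bigl(h(A)^*\,h(A)\,h(X)\bigr) = \operatorname{tr}(h(X)).
\]
Combining the two displays with the identity $\operatorname{tr}(h(B)) = 2\operatorname{Re}\operatorname{tr}(B)$ from Lemma \ref{HtoC} yields $2\operatorname{Re}\operatorname{tr}(AXA^*) = 2\operatorname{Re}\operatorname{tr}(X)$, and by the reality observation above this upgrades to $\operatorname{tr}(AXA^*) = \operatorname{tr}(X)$.

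The only subtlety I expect is the legitimacy of the manipulations with $h$: we really do need $h$ to be multiplicative (not just additive and $*$-preserving). This is implicit in Lemma \ref{HtoC} calling it a ``group homomorphism'', but to be safe I would briefly verify $h(AB) = h(A)h(B)$ by a block calculation using $(A_1 + A_2 j)(B_1 + B_2 j) = (A_1 B_1 - A_2 \overline{B_2}) + (A_1 B_2 + A_2 \overline{B_1})j$, which is the standard identification $M_n(\mathbb{H}) \hookrightarrow M_{2n}(\mathbb{C})$ as a real algebra. Once that is in hand, everything else is routine.
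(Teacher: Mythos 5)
Your proposal is correct and follows essentially the same route as the paper: both pass through the homomorphism $h$ of Lemma \ref{HtoC}, use the Hermiticity of $X$ and $AXA^{\ast}$ to reduce to real parts, and then invoke cyclicity of the complex trace together with $\operatorname{tr}(h(B))=2\operatorname{Re}\operatorname{tr}(B)$. Your extra remark that the multiplicativity $h(AB)=h(A)h(B)$ deserves explicit verification is a reasonable point of care, but it does not constitute a different argument.
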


\begin{proof}
Using Lemma \ref{HtoC} and the fact that $AXA^{\ast}$is\ Hermitian$~$for
$X\in\mathcal{H}_{2}(\mathbb{H})\ $and $A\in M_{n}(\mathbb{H})$, we compute:
\begin{align*}
tr(AXA^{\ast})  &  =\operatorname{Re}(tr(AXA^{\ast}))=\tfrac{1}{2}%
tr(h(AXA^{\ast}))=\tfrac{1}{2}tr(h(A)h(X)h(A^{\ast}))\\
&  =\tfrac{1}{2}tr(h(X))=\operatorname{Re}(tr(X))=tr(X)
\end{align*}
as required.\ \ \ \ \ $\blacksquare$
\end{proof}

Therefore, we have verified that the adjoint action of $SU(2,\mathbb{F})$ on
$\mathcal{H}_{2}^{0}(\mathbb{F})$ is well-defined for all $\mathbb{F}%
=\mathbb{R},\mathbb{C},\mathbb{H}$. In summary, we have the followings. Later
of this section, we also extend the table for$\ \mathbb{F}=\mathbb{R}%
,\mathbb{C},\mathbb{H\ }$to the octonions $\mathbb{O}$ by considering
$SU(2,\mathbb{O})\ $as $Spin\left(  9\right)  \ $and related spinors. In fact
the extension to $\mathbb{O\ }$can be considered for $Spin\left(  8\right)  $
to $\mathbb{O}^{2}$ along the triality. We discuss this in another article.%

\[
\underset{\text{\textbf{Table1: Special\ Unitary\ Groups\ and Spin Groups}}}{%
\begin{tabular}
[c]{|c|c|c|c|}\hline
$\mathbb{F}$ & $SU(2,\mathbb{F})\simeq Spin\left(  n\right)  $ & Spinors &
$\mathcal{H}_{2}^{0}(\mathbb{F})\simeq\mathbb{R}\oplus\mathbb{F}$, $\left(
\dim\right)  $\\\hline\hline
$\mathbb{R}$ & $SO(2)\simeq Spin\left(  2\right)  $ & $\mathbb{R}^{2}$ &
$\mathbb{R}\oplus\mathbb{R}$,$\mathbb{\ (}2\mathbb{)}$\\\hline
$\mathbb{C}$ & $SU(2)\simeq Spin\left(  3\right)  $ & $\mathbb{C}^{2}$ &
$\mathbb{R}\oplus\mathbb{C}$, $\mathbb{(}3\mathbb{)}$\\\hline
$\mathbb{H}$ & $Sp(2)\simeq Spin\left(  5\right)  $ & $\mathbb{H}^{2}$ &
$\mathbb{R}\oplus\mathbb{H}$,$\ \mathbb{(}5\mathbb{)}$\\\hline
$\mathbb{O}$ & $Spin\left(  9\right)  $ & $\mathbb{O}^{2}$ & $\mathbb{R}%
\oplus\mathbb{O}$,$\ \mathbb{(}9\mathbb{)}$\\\hline
\end{tabular}
\ }%
\]

\bigskip

\subsection{Group action of $SU(2,\mathbb{F}),\mathbb{F}=\mathbb{R}%
,\mathbb{C},\mathbb{H}$ and Hopf maps}

\begin{proposition}
\label{SORCH} For $\mathbb{F}=\mathbb{R},\mathbb{C},\mathbb{H}$, the following hold:

(1) The Hopf map $\Phi:\mathbb{F}^{2}\rightarrow\mathcal{H}_{2}^{0}%
(\mathbb{F})$ is equivalent with respect to the $SU(2,\mathbb{F})$ actions.

(2) For the composition map $\pi\circ\Phi:\mathbb{F}^{2}\rightarrow
\mathbb{R}\oplus\mathbb{F}$, the standard action of $SU(2,\mathbb{F})$ induces
$SO(1+dim_{\mathbb{R}}\mathbb{F})$ action on $\mathbb{R}\oplus\mathbb{F}$.
\end{proposition}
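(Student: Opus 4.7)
The plan is to verify both parts by direct computation, leveraging the trace invariance already established in the preceding lemmas.

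For part (1), equivariance means $\Phi(Av) = A \Phi(v) A^{\ast} = \mathrm{Ad}_A(\Phi(v))$ for every $A \in SU(2,\mathbb{F})$ and $v \in \mathbb{F}^2$. I would compute directly:
\[
\Phi(Av) = (Av)(Av)^{\ast} - \tfrac{tr((Av)(Av)^{\ast})}{2} I_2 = A v v^{\ast} A^{\ast} - \tfrac{tr(Avv^{\ast}A^{\ast})}{2} I_2.
\]
The only subtlety is recognizing that in the quaternionic case, $tr(Avv^{\ast}A^{\ast}) = tr(vv^{\ast})$ holds by the trace invariance lemma proved just above (using the homomorphism $h : M_n(\mathbb{H}) \to M_{2n}(\mathbb{C})$ and the cyclicity of the complex trace), while in the $\mathbb{R}$ and $\mathbb{C}$ cases it is standard cyclicity. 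Once this identity is in hand, factoring $A$ and $A^{\ast}$ outside yields $A(vv^{\ast} - \tfrac{tr(vv^{\ast})}{2}I_2)A^{\ast} = \mathrm{Ad}_A(\Phi(v))$, and part (1) is done.

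For part (2), the composition $\pi\circ\Phi$ transports the action $v \mapsto Av$ on $\mathbb{F}^2$ to the adjoint action $X \mapsto A X A^{\ast}$ on $\mathcal{H}_2^0(\mathbb{F}) \simeq \mathbb{R}\oplus\mathbb{F}$ by part (1). I would then verify that $\mathrm{Ad}_A$ preserves the norm $|X| = \sqrt{\tfrac{1}{2}tr(X^2)}$ on $\mathcal{H}_2^0(\mathbb{F})$. Since $A^{\ast}A = I$, we have $(AXA^{\ast})^2 = AX^2A^{\ast}$, and $X^2$ is Hermitian whenever $X$ is; applying the trace invariance lemma to $X^2$ gives
\[
|AXA^{\ast}|^2 = \tfrac{1}{2} tr(AX^2A^{\ast}) = \tfrac{1}{2} tr(X^2) = |X|^2.
\]
Combined with Lemma \ref{norm} ($\pi$ is an isometry), this shows that the induced action of $SU(2,\mathbb{F})$ on $\mathbb{R}\oplus\mathbb{F}$ is orthogonal, producing a continuous homomorphism $\rho : SU(2,\mathbb{F}) \to O(1+\dim\mathbb{F})$.

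It remains to refine the target from $O$ to $SO$. Here I would invoke connectedness: $SO(2)$, $SU(2) \simeq S^3$, and $Sp(2)$ are all path-connected, so the image of $\rho$ lies in the identity component $SO(1+\dim\mathbb{F})$, and since $\rho(I) = \mathrm{id}$ certainly has determinant $+1$, this fixes the component. I do not anticipate a serious obstacle; the main thing requiring care is the quaternionic trace step, which the previous lemma resolves, so the argument is essentially a packaging of established facts.
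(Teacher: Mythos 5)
Your proposal is correct and follows essentially the same route as the paper: part (1) is the same direct computation, factoring $A$ and $A^{\ast}$ out of $(Av)(Av)^{\ast}$ using associativity and handling the quaternionic trace via the invariance lemma, and part (2) is the same norm-preservation-plus-path-connectedness argument to land in $SO(1+\dim_{\mathbb{R}}\mathbb{F})$. The only immaterial difference is that for norm preservation the paper uses $|\Phi(Av)|=\tfrac{1}{2}|Av|^{2}=\tfrac{1}{2}|v|^{2}$ from Lemma \ref{norm}, whereas you verify directly that $\mathrm{Ad}_{A}$ is an isometry of $\mathcal{H}_{2}^{0}(\mathbb{F})$ by applying trace invariance to $X^{2}$ (which is Hermitian though not traceless, so you are using the proof of that lemma rather than its literal statement); both work.
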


\begin{proof}
Let $A\in SU(2,\mathbb{F})$ and $v=%
\begin{pmatrix}
x\\
y
\end{pmatrix}
\in\mathbb{F}^{2}$.

(1) Then
\begin{align*}
\Phi\left(  Av\right)   &  =\left(  Av\right)  \left(  Av\right)  ^{\ast
}-\frac{1}{2}tr\left(  \left(  Av\right)  \left(  Av\right)  ^{\ast}\right)
I_{2}=\left(  Av\right)  \left(  v^{\ast}A^{\ast}\right)  -\frac{1}%
{2}\left\vert Av\right\vert ^{2}I_{2}\\
&  =A\left(  vv^{\ast}\right)  A^{\ast}-\frac{1}{2}\left\vert v\right\vert
^{2}I_{2}=A\left(  vv^{\ast}-\frac{1}{2}\left\vert v\right\vert ^{2}%
I_{2}\right)  A^{\ast}\\
&  =A\Phi\left(  v\right)  A^{\ast}=Ad_{A}\left(  \Phi\left(  v\right)
\right)
\end{align*}
The second equality follows from the compatibility of the norm. The third
equality is because of $A\in SU(2,\mathbb{F})$ and that the three normed
division algebras $\mathbb{F}$ are associative. Therefore, $\Phi$ is
equivalent under the $SU(2,\mathbb{F})$ action.

(2) The standard action of $SU(2,\mathbb{F})$ on $\mathbb{F}^{2}$, followed by
$\pi\circ\Phi$, preserves the norm on $\mathbb{R}\oplus\mathbb{F}$. That is,
\[
\left\vert \pi\circ\Phi\left(  Av\right)  \right\vert =\left\vert \pi\circ
\Phi\left(  v\right)  \right\vert
\]
This implies that the induced action on $\mathbb{R}\oplus\mathbb{F}$ lies in
the orthogonal group $O(1+dim_{\mathbb{R}}\mathbb{F})$. Since the isomorphism
$\pi$ is norm-preserving, as shown in Lemma \ref{norm}, it suffices to verify
that
\[
\left\vert \Phi\left(  Av\right)  \right\vert =\left\vert \Phi\left(
v\right)  \right\vert .
\]
By Lemma \ref{norm}, we have
\[
\left\vert \Phi\left(  Av\right)  \right\vert =\frac{1}{2}\left\vert
Av\right\vert ^{2}%
\]
and since the standard action of $SU(2,\mathbb{F})$ preserve the norm on
$\mathbb{F}^{2}$, we conclude that
\[
\frac{1}{2}\left\vert Av\right\vert ^{2}=\frac{1}{2}\left\vert v\right\vert
^{2}\Rightarrow\left\vert \Phi\left(  Av\right)  \right\vert =\left\vert
\Phi\left(  v\right)  \right\vert .
\]

Furthermore, since $SU(2,\mathbb{F})$ is path-connected for all $\mathbb{F}%
=\mathbb{R},\mathbb{C},\mathbb{H}$, the image of the induced action lies not
only in $O(1+dim_{\mathbb{R}}\mathbb{F})$, but actually in the special
orthogonal group $SO(1+dim_{\mathbb{R}}\mathbb{F})$%
.\ $\ \ \ \ \ \ \blacksquare$
\end{proof}

\bigskip

Now for $\mathbb{F}=\mathbb{R},\mathbb{C},\mathbb{H}$, we consider two group
actions on $\mathbb{F}^{2}$, the standard action of $SU(2,\mathbb{F})$ and
right $\mathbb{F}(1)$ action. We examine the elements of these groups that act
compatibly under both. This is equivalent to finding the elements $c$ in the
center $Z(\mathbb{F})$ of $\mathbb{F}$ satisfying (i) $|c|=1$, so that
\[
c\in Z(\mathbb{F})\cap\mathbb{F}(1),
\]
and (ii) $cI_{2}\in SU(2,\mathbb{F})$.

In the cases of $\mathbb{F}=\mathbb{R},\mathbb{C}$, since $\mathbb{R}%
,\mathbb{C}$ are commutative, we have $Z(\mathbb{F})\cap\mathbb{F}%
(1)=\mathbb{F}(1)$. From the matrix group perspective, when $\mathbb{F}%
=\mathbb{R}$, the matrix \ $cI_{2}\in SU(2,\mathbb{R})=SO(2)$ if and only if
$c\in\{\pm1\}$. A similar argument holds for $\mathbb{F}=\mathbb{C}$, leading
again to $c\in\{\pm1\}$. On the other hand, when $\mathbb{F}=\mathbb{H}$,
commutativity fails, and the center is $Z(\mathbb{H})=\mathbb{R}$. Hence,
$Z(\mathbb{H})\cap\mathbb{H}(1)=\{\pm1\}$. For the matrix $cI_{2}$ to belong
to $SU(2,\mathbb{H})$, it must satisfy $|c|^{2}=1$. Therefore, in this case as
well, we conclude that $c\in\{\pm1\}$. Therefore, the standard action of
$SU(2,\mathbb{F})$ and the right action of $\mathbb{F}(1)$ act on
$\mathbb{F}^{2}$, the only elements that act compatibly on both sides are $\pm
I_{2}$.

As a result, we obtain the following diagram illustrating the group actions
and the Hopf map in this section for $\mathbb{F}=\mathbb{R},\mathbb{C}%
,\mathbb{H}$:%
\[
\underset{\text{\textbf{Diagram\ A}}}{%
\begin{array}
[c]{ccccc}%
SU(2,\mathbb{F}) & \curvearrowright & \mathbb{F}^{2} & \curvearrowleft &
\mathbb{F}(1)\\
\tilde{\pi}\downarrow &  & \Phi\downarrow &  & \downarrow\\
SU(2,\mathbb{F})/\left\{  \pm I_{2}\right\}  & \curvearrowright &
\mathcal{H}_{2}^{0}(\mathbb{F}) & \curvearrowleft & \left\{  1\right\} \\
\tilde{\rho}\downarrow &  & \pi\downarrow &  & \\
SO(1+dim_{\mathbb{R}}\mathbb{F}) & \curvearrowright & \mathbb{R}%
\oplus\mathbb{F} &  &
\end{array}
}%
\]
Here $SU(2,\mathbb{F})$-action to $\mathbb{F}^{2}$ is the spin action to
spinor $\mathbb{F}^{2}$ and $SU(2,\mathbb{F})/\left\{  \pm I_{2}\right\}
\ $action to $\mathcal{H}_{2}^{0}(\mathbb{F})\ $is the corresponding
$SO(1+dim_{\mathbb{R}}\mathbb{F})$ action on $\pi\left(  \mathcal{H}_{2}%
^{0}(\mathbb{F})\right)  =\mathbb{R}\oplus\mathbb{F\ }$so that $\tilde{\rho
}\circ\tilde{\pi}$ is\ $2$-covering of $Spin\left(  1+dim_{\mathbb{R}%
}\mathbb{F}\right)  $ for $\mathbb{F}=\mathbb{R},\mathbb{C},\mathbb{H}$.

\bigskip

\subsection{ \label{subsectionOandGroupaction}$SU(2,\mathbb{O})\ $and Hopf map
for $\mathbb{O}$}

In this subsection, we want to construct octonionic version of Diagram\ A in
the previous subsection. For this purpose, we consider the extension of
special unitary groups $SU(2,\mathbb{F})\ $of $\mathbb{F}=\mathbb{R}%
,\mathbb{C},\mathbb{H}$ in Table 1\ to octonions $\mathbb{O\ }$as
$SU(2,\mathbb{O})=Spin(9)$ along the Hopf maps. Recall that $\mathbb{O}\left(
1\right)  $-multiplication to $\mathbb{O}^{2}\ $is to the right. We
consider\ a model for Clifford algebra of a $9$-dimensional vector space from
\cite{Har} (chapter 14) to define $Spin(9)$.

\begin{lemma}
\label{Spin9Lemma}( \cite{Har}\ Lemma 14.77) The group $Spin(9)\ $is generated
by the $8$-sphere
\[
\left\{  \left.  \left(
\begin{array}
[c]{cc}%
r & R_{u}\\
R_{\bar{u}} & -r
\end{array}
\right)  \right\vert \ r\in\mathbb{R}\text{,}\ u\in\mathbb{O}\text{, and
}r^{2}+\left\vert u\right\vert ^{2}=1\text{ }\right\}
\]
in $End_{\mathbb{R}}\left(  \mathbb{O\oplus O}\right)  \simeq Cl\left(
9\right)  ^{even}$.
\end{lemma}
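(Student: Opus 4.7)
The plan is to identify the 8-sphere of matrices with the image of unit vectors of $\mathbb{R}^{9} = \mathbb{R} \oplus \mathbb{O}$ under a spinor representation of $Cl(9)$ on $\mathbb{O} \oplus \mathbb{O}$, and then invoke the standard realization of $Spin(9) \subset Cl(9)^{\mathrm{even}}$ as the subgroup generated by products of pairs of unit vectors of $\mathbb{R}^{9}$.

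First, I would define the $\mathbb{R}$-linear map
\[
\gamma : \mathbb{R} \oplus \mathbb{O} \longrightarrow \mathrm{End}_{\mathbb{R}}(\mathbb{O} \oplus \mathbb{O}), \qquad (r,u) \longmapsto \begin{pmatrix} r\cdot \mathrm{id} & R_{u} \\ R_{\bar u} & -r\cdot \mathrm{id} \end{pmatrix},
\]
where $R_{u}(a) = au$ denotes right multiplication by $u$, and verify the Clifford relation $\gamma(r,u)^{2} = (r^{2} + |u|^{2})\, I$. The off-diagonal blocks vanish because the real scalar $r$ commutes with every $R_{u}$; the two diagonal blocks each reduce to $R_{u} R_{\bar u} = R_{\bar u} R_{u} = |u|^{2}\cdot \mathrm{id}$, which evaluated on $a\in \mathbb{O}$ says $(a\bar u)u = a|u|^{2}$ and $(au)\bar u = a|u|^{2}$. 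These are precisely the alternative identity $(x\bar y)y = x|y|^{2}$ of Lemma \ref{ConjugationLemma2}; this is the sole step in which the nonassociativity of $\mathbb{O}$ is absorbed, via Artin's theorem (Proposition \ref{Artin}) applied to the subalgebra generated by $a$ and $u$.

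Next, by the universal property of the Clifford algebra (with sign convention $v\cdot v = |v|^{2}$), $\gamma$ extends uniquely to an algebra homomorphism $\tilde\gamma: Cl(9) \to \mathrm{End}_{\mathbb{R}}(\mathbb{O} \oplus \mathbb{O})$. Its restriction $\tilde\gamma|_{Cl(9)^{\mathrm{even}}}$ is an injective algebra homomorphism between algebras of the same real dimension $2^{8}=256$, using $Cl(9)^{\mathrm{even}} \cong Cl(8) \cong M_{16}(\mathbb{R})$ and $\mathrm{End}_{\mathbb{R}}(\mathbb{O} \oplus \mathbb{O}) \cong M_{16}(\mathbb{R})$; hence it is an isomorphism, giving the identification $\mathrm{End}_{\mathbb{R}}(\mathbb{O} \oplus \mathbb{O}) \simeq Cl(9)^{\mathrm{even}}$ asserted in the lemma.

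Finally, $Spin(9) \subset Cl(9)^{\mathrm{even}}$ is by definition the subgroup generated by products $v_{1}v_{2}$ with $v_{1},v_{2}$ ranging over the unit sphere $S^{8} \subset \mathbb{R}^{9}$. Under $\tilde\gamma$ each such unit vector $(r,u)$ is carried to the displayed matrix, so $Spin(9)$ is exactly the subgroup of the multiplicative group of $\mathrm{End}_{\mathbb{R}}(\mathbb{O}\oplus\mathbb{O})$ generated (via even-length products) by the 8-sphere in the statement. The main obstacle is the Clifford relation verification: once the octonionic identity $(au)\bar u = a|u|^{2}$ is in hand, the remainder is formal Clifford-algebra representation theory.
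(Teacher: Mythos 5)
The paper offers no proof of this lemma: it is quoted directly from Harvey (\cite{Har}, Lemma 14.77), so there is no in-paper argument to compare against. Your reconstruction is the standard one and is essentially correct. The Clifford relation $\gamma(r,u)^{2}=(r^{2}+|u|^{2})I$ does reduce to the alternative identities $(a\bar u)u=(au)\bar u=a|u|^{2}$ of Lemma \ref{ConjugationLemma2}; the universal property then yields $\tilde\gamma\colon Cl(9)\to End_{\mathbb{R}}(\mathbb{O}\oplus\mathbb{O})$; and the restriction to the even part is an isomorphism because $Cl(9)^{even}\cong Cl(8)\cong M_{16}(\mathbb{R})$ is simple and unital homomorphisms out of a simple algebra are injective (this is the justification for the injectivity you assert without comment), while both sides have real dimension $256$.

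One point you leave implicit deserves a sentence. The lemma says $Spin(9)$ is generated by the $8$-sphere itself, whereas the definition you invoke realizes $Spin(9)$ as even-length products of unit vectors; the group generated by the matrices $\gamma(r,u)$ a priori also contains odd-length products, i.e.\ the image of all of $Pin(9)$. These coincide here: the volume element $\omega=e_{1}\cdots e_{9}$ is odd, central, squares to $+1$, and hence acts on the irreducible module $\mathbb{O}\oplus\mathbb{O}$ as $\pm I\in\tilde\gamma(Spin(9))$, so every odd-length product of generators already lies in $\tilde\gamma(Spin(9))$ and $\tilde\gamma(Pin(9))=\tilde\gamma(Spin(9))$. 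With that remark added, the argument is complete.
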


\begin{remark}
Here $R_{u}\left(  w\right)  :=wu$ for $w\in\mathbb{O}$ and $Cl\left(
9\right)  $ is the Clifford algebra by a $9$-dimensional vector space$\ $and
$Cl\left(  9\right)  ^{even}\ $is the subalgebra of $Cl\left(  9\right)
\ $consisted of even degree elements. In \cite{Har}\ (Theorem 14.99), we can
also consider the copy
\[
\widehat{Spin(9)}:=C\cdot Spin(9)\cdot C
\]
$\ $of $Spin(9)\ $where $C\ $is the conjugation on the both factors of
$\mathbb{O\oplus O\,}$. Since
\[
C\cdot\left(
\begin{array}
[c]{cc}%
r & R_{u}\\
R_{\bar{u}} & -r
\end{array}
\right)  \cdot C=\left(
\begin{array}
[c]{cc}%
r & L_{\bar{u}}\\
L_{u} & -r
\end{array}
\right)
\]
where $L_{u}\left(  w\right)  :=uw$ for $w\in\mathbb{O}$, $\widehat
{Spin(9)}\ $is generated by
\[
\left\{  \left.  \left(
\begin{array}
[c]{cc}%
r & L_{\bar{u}}\\
L_{u} & -r
\end{array}
\right)  \right\vert \ r\in\mathbb{R}\text{,}\ u\in\mathbb{O}\text{, and
}r^{2}+\left\vert u\right\vert ^{2}=1\text{ }\right\}
\]
by the Lemma \ref{Spin9Lemma}.
\end{remark}

\bigskip

The non-associativity of octonions $\mathbb{O}$ is the biggest issue to
construct the octonionic version of Diagram\ A. For this issue, we consider
the generators given by unit $8$-sphere
\[
\mathbb{R\oplus O}\left(  1\right)  :=\left\{  \left.  \left(  r,u\right)
\in\mathbb{R\oplus O}\right\vert \ \text{ }r^{2}+\left\vert u\right\vert
^{2}=1\text{ }\right\}
\]
and related map defined by%
\[%
\begin{array}
[c]{cccc}%
g: & \mathbb{R\oplus O}\left(  1\right)  & \rightarrow & \widehat
{Spin(9)}\subset End_{\mathbb{R}}\left(  \mathbb{O\oplus O}\right) \\
& \left(  r,u\right)  & \mapsto & g\left(  r,u\right)  :=\left(
\begin{array}
[c]{cc}%
r & L_{\bar{u}}\\
L_{u} & -r
\end{array}
\right)  .
\end{array}
\]
Note $g\left(  r,u\right)  g\left(  r,u\right)  =\left(  r^{2}+\left\vert
u\right\vert ^{2}\right)  I_{2}=I_{2}$ and
\[
g\left(  r,u\right)  v=%
\begin{pmatrix}
rx+\bar{u}y\\
ux-ry
\end{pmatrix}
\ \text{for }v=%
\begin{pmatrix}
x\\
y
\end{pmatrix}
.
\]
We compute $\pi\Phi\left(  g\left(  r,u\right)  v\right)  \in\mathbb{R\oplus
O\ }$in the following lemma which is useful in the remain part of the article.

\begin{lemma}
\label{HopfOctKeyLemma}For $\left(  r,u\right)  \in\mathbb{R\oplus O}\left(
1\right)  $ and $x,y\in\mathbb{O}$,%
\begin{align*}
\frac{1}{2}\left(  \left\vert rx+\bar{u}y\right\vert ^{2}-\left\vert
ux-ry\right\vert ^{2}\right)   &  =\left(  r^{2}-\left\vert u\right\vert
^{2}\right)  \left(  \frac{\left\vert x\right\vert ^{2}-\left\vert
y\right\vert ^{2}}{2}\right)  +2r\left\langle \bar{u},x\bar{y}\right\rangle \\
\left(  rx+\bar{u}y\right)  \overline{\left(  ux-ry\right)  }  &  =2r\bar
{u}\left(  \frac{\left\vert x\right\vert ^{2}-\left\vert y\right\vert ^{2}}%
{2}\right)  -x\bar{y}+2\bar{u}\left\langle \bar{u},x\bar{y}\right\rangle
\end{align*}

\end{lemma}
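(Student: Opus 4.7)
The plan is to establish both identities by expanding via bilinearity of the inner product, and then to collapse the resulting non-associative products using the alternative, polarization, and Moufang identities from Lemmas~\ref{OCTBasicLemma}, \ref{conjugationLemma}, \ref{ConjugationLemma2}, and Proposition~\ref{MoufangPro}. The constraint $r^{2}+|u|^{2}=1$ is what ultimately fixes the scalar coefficient of $-x\bar{y}$ in the second identity.

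For the first identity, expanding the two squared norms using Lemma~\ref{OCTBasicLemma}(1) (together with linearity of the Euclidean inner product) yields
$$|rx+\bar{u}y|^{2}-|ux-ry|^{2}=(r^{2}-|u|^{2})(|x|^{2}-|y|^{2})+2r\bigl(\langle x,\bar{u}y\rangle+\langle ux,y\rangle\bigr).$$
So it suffices to verify $\langle x,\bar{u}y\rangle+\langle ux,y\rangle=2\langle \bar{u},x\bar{y}\rangle$. Applying the adjoint formulas $\langle ab,c\rangle=\langle a,c\bar{b}\rangle=\langle b,\bar{a}c\rangle$ of Lemma~\ref{conjugationLemma}, each summand on the left rewrites directly as $\langle \bar{u},x\bar{y}\rangle$, giving the claim.

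For the second identity, expand
$$(rx+\bar{u}y)(\bar{x}\bar{u}-r\bar{y})=(rx)(\bar{x}\bar{u})-r^{2}x\bar{y}+(\bar{u}y)(\bar{x}\bar{u})-r(\bar{u}y)\bar{y}.$$
The first and fourth terms collapse via the alternative identities $a(\bar{a}c)=|a|^{2}c$ and $(a\bar{b})b=a|b|^{2}$ of Lemma~\ref{ConjugationLemma2} to $r|x|^{2}\bar{u}$ and $-r|y|^{2}\bar{u}$, contributing the desired $2r\bar{u}(|x|^{2}-|y|^{2})/2$. The Moufang identity $(ax)(ya)=a(xy)a$ of Proposition~\ref{MoufangPro} converts the central term $(\bar{u}y)(\bar{x}\bar{u})$ into the unambiguous sandwich $\bar{u}(y\bar{x})\bar{u}$. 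To evaluate the sandwich I would apply the polarization identity $a(\bar{b}c)+b(\bar{a}c)=2\langle a,b\rangle c$ of Lemma~\ref{ConjugationLemma2} with $a=c=\bar{u}$ and $b=x\bar{y}$ (so that $\bar{b}c=(y\bar{x})\bar{u}$, since $\overline{x\bar{y}}=y\bar{x}$), obtaining
$$\bar{u}\bigl((y\bar{x})\bar{u}\bigr)+|u|^{2}x\bar{y}=2\langle \bar{u},x\bar{y}\rangle\bar{u},$$
and hence $\bar{u}(y\bar{x})\bar{u}=2\bar{u}\langle \bar{u},x\bar{y}\rangle-|u|^{2}x\bar{y}$. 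Finally, combining the term $-r^{2}x\bar{y}$ with the resulting $-|u|^{2}x\bar{y}$ and invoking $r^{2}+|u|^{2}=1$ collapses the coefficient to $-x\bar{y}$, producing the right-hand side.

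The main obstacle is the non-associative sandwich $\bar{u}(y\bar{x})\bar{u}$: unfolding it through $\bar{u}=2\operatorname{Re}(u)-u$ would force a case analysis of real and imaginary parts of $u$, and expanding by the rank equation tangles the expression further. The cleaner route is to recognize $\bar{u}(w\bar{u})$ as an instance of the polarization identity of Lemma~\ref{ConjugationLemma2} with both outer factors equal to $\bar{u}$, which collapses the triple product in one step. Once this substitution is identified and flexibility is used to drop the parentheses in the sandwich, the remainder of the argument is term-by-term bookkeeping against the target formula.
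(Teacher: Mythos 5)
Your proposal is correct and follows essentially the same route as the paper: expand both expressions, collapse the pure terms with $a(\bar a c)=|a|^2c$ and $(a\bar b)b=a|b|^2$, convert the cross term to the sandwich $\bar u\bigl((y\bar x)\bar u\bigr)$ by the Moufang identity, and use $r^2+|u|^2=1$ at the end. The only (cosmetic) difference is in evaluating the sandwich: the paper rewrites $(y\bar x)\bar u$ via the conjugation formula $a=2\langle a,1\rangle-\bar a$ and then applies $\bar u(uz)=|u|^2z$, whereas you invoke the polarized identity $a(\bar b c)+b(\bar a c)=2\langle a,b\rangle c$ directly with $a=c=\bar u$, $b=x\bar y$ — both are immediate consequences of Lemma~\ref{ConjugationLemma2} and yield the same intermediate expression.
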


\begin{proof}
For the first one,%
\begin{align*}
&  \frac{1}{2}\left(  \left\vert rx+\bar{u}y\right\vert ^{2}-\left\vert
ux-ry\right\vert ^{2}\right) \\
&  =\frac{1}{2}\left(
\begin{array}
[c]{c}%
r^{2}\left\vert x\right\vert ^{2}+rx\overline{\left(  \bar{u}y\right)
}+r\left(  \bar{u}y\right)  \bar{x}+\left\vert \bar{u}\right\vert
^{2}\left\vert y\right\vert ^{2}\\
-\left\vert u\right\vert ^{2}\left\vert x\right\vert ^{2}+r\left(  ux\right)
\bar{y}+ry\overline{\left(  ux\right)  }-r^{2}\left\vert y\right\vert ^{2}%
\end{array}
\right) \\
&  =\left(  r^{2}-\left\vert u\right\vert ^{2}\right)  \left(  \frac
{\left\vert x\right\vert ^{2}-\left\vert y\right\vert ^{2}}{2}\right)
+r\left(  \operatorname{Re}\left(  \bar{u}y\right)  \bar{x}+\operatorname{Re}%
\left(  ux\right)  \bar{y}\right) \\
&  =\left(  r^{2}-\left\vert u\right\vert ^{2}\right)  \left(  \frac
{\left\vert x\right\vert ^{2}-\left\vert y\right\vert ^{2}}{2}\right)
+r\left(  \left\langle \left(  \bar{u}y\right)  \bar{x},1\right\rangle
+\left\langle \left(  ux\right)  \bar{y},1\right\rangle \right) \\
&  =\left(  r^{2}-\left\vert u\right\vert ^{2}\right)  \left(  \frac
{\left\vert x\right\vert ^{2}-\left\vert y\right\vert ^{2}}{2}\right)
+2r\left\langle \bar{u},x\bar{y}\right\rangle
\end{align*}
by Lemma \ \ref{OCTBasicLemma}, Lemma\ref{conjugationLemma},
Lemma\ref{ConjugationLemma2}. And for the second one,%
\begin{align*}
&  \left(  rx+\bar{u}y\right)  \overline{\left(  ux-ry\right)  }=\left(
rx+\bar{u}y\right)  \left(  \overline{ux}-r\bar{y}\right) \\
&  =r\left\vert x\right\vert ^{2}\bar{u}+\left(  \bar{u}y\right)  \left(
\bar{x}\bar{u}\right)  -r^{2}x\bar{y}-r\bar{u}\left\vert y\right\vert ^{2}\\
&  =2r\bar{u}\left(  \frac{\left\vert x\right\vert ^{2}-\left\vert
y\right\vert ^{2}}{2}\right)  +\bar{u}\left(  \left(  y\bar{x}\right)  \bar
{u}\right)  -r^{2}x\bar{y}\\
&  =2r\bar{u}\left(  \frac{\left\vert x\right\vert ^{2}-\left\vert
y\right\vert ^{2}}{2}\right)  +\bar{u}\left(  -u\overline{\left(  y\bar
{x}\right)  }+2\left\langle \left(  y\bar{x}\right)  \bar{u},1\right\rangle
\right)  -r^{2}x\bar{y}\\
&  =2r\bar{u}\left(  \frac{\left\vert x\right\vert ^{2}-\left\vert
y\right\vert ^{2}}{2}\right)  -\left(  r^{2}+\left\vert u\right\vert
^{2}\right)  x\bar{y}+2\bar{u}\left\langle \bar{u},x\bar{y}\right\rangle \\
&  =2r\bar{u}\left(  \frac{\left\vert x\right\vert ^{2}-\left\vert
y\right\vert ^{2}}{2}\right)  -x\bar{y}+2\bar{u}\left\langle \bar{u},x\bar
{y}\right\rangle .
\end{align*}
where the third and sixth equalities are given by Moufang identities
(Proposition\ref{MoufangPro}) and $r^{2}+\left\vert u\right\vert ^{2}=1$
respectively.\ \ $\blacksquare$
\end{proof}

Again, thanks to non-associativity of octonions $\mathbb{O}$, we show the
action of $\widehat{Spin(9)}\ $is fiberwise for the Hopf map instead of
Proposition\ref{SORCH}.

\begin{proposition}
\label{OctO1SuProp}For $%
\begin{pmatrix}
x\\
y
\end{pmatrix}
,%
\begin{pmatrix}
a\\
b
\end{pmatrix}
$ $\in\mathbb{O}^{2}\smallsetminus\left\{
\begin{pmatrix}
0\\
0
\end{pmatrix}
\right\}  $\ and each $A\in\widehat{Spin(9)}$,
\[
\text{if\ \ }\Phi\left(
\begin{pmatrix}
x\\
y
\end{pmatrix}
\right)  =\Phi\left(
\begin{pmatrix}
a\\
b
\end{pmatrix}
\right)  \ \text{then is\ }\Phi\left(  A%
\begin{pmatrix}
x\\
y
\end{pmatrix}
\right)  =\Phi\left(  A%
\begin{pmatrix}
a\\
b
\end{pmatrix}
\right)  \text{.}%
\]

\end{proposition}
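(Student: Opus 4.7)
My plan is to reduce the statement to a computation on generators, and then read it off from Lemma \ref{HopfOctKeyLemma}. The first step is to observe that the property ``$A$ preserves fibers of $\Phi_{\mathbb{O}}$,'' i.e., the implication $\Phi_{\mathbb{O}}(v_1)=\Phi_{\mathbb{O}}(v_2) \Rightarrow \Phi_{\mathbb{O}}(Av_1)=\Phi_{\mathbb{O}}(Av_2)$, is closed under composition: if it holds for $A_1$ and $A_2$, then applying it first with $A_2$ and then with $A_1$ shows it holds for $A_1A_2$. Thus it suffices to verify the statement on a set of generators of $\widehat{Spin(9)}$. By the remark following Lemma \ref{Spin9Lemma}, such generators are exactly the matrices $g(r,u)$ with $(r,u)\in\mathbb{R}\oplus\mathbb{O}(1)$.

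Next, I would translate the hypothesis via the isomorphism $\pi$. The equation $\Phi_{\mathbb{O}}(v_1)=\Phi_{\mathbb{O}}(v_2)$ is equivalent, writing $v_1=(x,y)^{t}$ and $v_2=(a,b)^{t}$, to the two scalar identities
\[
\tfrac{1}{2}\bigl(|x|^{2}-|y|^{2}\bigr)=\tfrac{1}{2}\bigl(|a|^{2}-|b|^{2}\bigr), \qquad x\bar{y}=a\bar{b}.
\]
The crux of the proof is the observation that the right-hand sides of both identities in Lemma \ref{HopfOctKeyLemma} involve the pair $(x,y)$ only through the two expressions $\tfrac{1}{2}(|x|^{2}-|y|^{2})$ and $x\bar{y}$ (together with the fixed data $r$ and $\bar{u}$). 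Consequently, for the generator $A=g(r,u)$, the two coordinates of $\pi\Phi_{\mathbb{O}}(g(r,u)v)$, namely
\[
\tfrac{1}{2}\bigl(|rx+\bar{u}y|^{2}-|ux-ry|^{2}\bigr) \quad\text{and}\quad (rx+\bar{u}y)\overline{(ux-ry)},
\]
coincide for $v=v_1$ and $v=v_2$. Since $\pi$ is a bijection, this yields $\Phi_{\mathbb{O}}(g(r,u)v_1)=\Phi_{\mathbb{O}}(g(r,u)v_2)$, completing the verification for generators and hence, by the first step, for every $A\in\widehat{Spin(9)}$.

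The conceptual obstacle is precisely the one flagged in the paragraph preceding the proposition: for $\mathbb{F}=\mathbb{R},\mathbb{C},\mathbb{H}$, Proposition \ref{SORCH} establishes the stronger equivariance $\Phi(Av)=A\Phi(v)A^{\ast}$, using that $A(vv^{\ast})A^{\ast}=(Av)(Av)^{\ast}$ by associativity; this identity simply fails for $\mathbb{O}$. The present weaker ``fiberwise'' formulation sidesteps this by never asking for a direct equivariance, and the technical content of the workaround is absorbed entirely into Lemma \ref{HopfOctKeyLemma}, whose derivation via the Moufang identities has already been carried out. Thus, once the generator reduction is in place, only the trivial observation that the right-hand sides depend on $v$ through $\Phi_{\mathbb{O}}(v)$ remains.
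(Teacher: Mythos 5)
Your proposal is correct and follows essentially the same route as the paper: reduce to the generators $g(r,u)$ of $\widehat{Spin(9)}$ and observe that, by Lemma \ref{HopfOctKeyLemma}, both coordinates of $\pi\Phi_{\mathbb{O}}(g(r,u)v)$ depend on $v$ only through $\tfrac{1}{2}(|x|^{2}-|y|^{2})$ and $x\bar{y}$. You merely make explicit the closure-under-composition step that the paper leaves implicit in its phrase ``it is enough to show.''
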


\begin{proof}
To prove the preposition, it is enough to show
\[
\Phi\left(  g\left(  r,u\right)
\begin{pmatrix}
x\\
y
\end{pmatrix}
\right)  =\Phi\left(  g\left(  r,u\right)
\begin{pmatrix}
a\\
b
\end{pmatrix}
\right)
\]
for each $\left(  r,u\right)  \in\mathbb{R\oplus O}\left(  1\right)  $. This
is true by Lemma \ref{HopfOctKeyLemma}$\ $since
\[
\frac{\left\vert x\right\vert ^{2}-\left\vert y\right\vert ^{2}}{2}%
=\frac{\left\vert a\right\vert ^{2}-\left\vert b\right\vert ^{2}}{2}%
,\ x\bar{y}=a\bar{b}\text{.}%
\]

\ $\blacksquare$
\end{proof}

\begin{remark}
From the point of view of Lemma\ \ref{OctonionFiberLemma}, this proposition is
equivalent to that for $%
\begin{pmatrix}
x\\
y
\end{pmatrix}
,%
\begin{pmatrix}
a\\
b
\end{pmatrix}
$ $\in\mathbb{O}^{2}\smallsetminus\left\{
\begin{pmatrix}
0\\
0
\end{pmatrix}
\right\}  $, $c\in\mathbb{O}\left(  1\right)  $, $A\in\widehat{Spin(9)}$%
\[
\text{if\ \ }\Phi_{\mathbb{O}}\left(
\begin{pmatrix}
x\\
y
\end{pmatrix}
\right)  =\Phi_{\mathbb{O}}\left(
\begin{pmatrix}
x\\
y
\end{pmatrix}
\cdot c\right)  \ \text{then is\ }\Phi_{\mathbb{O}}\left(  A%
\begin{pmatrix}
x\\
y
\end{pmatrix}
\right)  =\Phi_{\mathbb{O}}\left(  A\left(
\begin{pmatrix}
x\\
y
\end{pmatrix}
\cdot c\right)  \right)  \text{.}%
\]

\end{remark}

We observe that $\left(  g\left(  0,1\right)  \right)  ^{2}=Id,$ $g\left(
0,1\right)  g\left(  0,-1\right)  =-Id\ $in $\widehat{Spin(9)}\ $which induces
the identity map on $\mathbb{R\oplus O\ }$by $\pi\Phi$, this is all the
possible case to get the identity map. Therefore, we may consider
$\widehat{Spin(9)}/\left\{  \pm Id\right\}  $ action to $\mathbb{R\oplus O\ }%
$by $\pi\Phi\,.$%
\[%
\begin{array}
[c]{ccccccc}%
\mathbb{R\oplus O}\left(  1\right)  & \rightarrow & \widehat{Spin(9)} &
\curvearrowright & \mathbb{O}^{2} & \curvearrowleft & \mathbb{O}(1)\\
&  & \downarrow &  & \pi\Phi_{\mathbb{O}}\downarrow &  & \downarrow\\
&  & \widehat{Spin(9)}/\left\{  \pm Id\right\}  & \curvearrowright &
\mathbb{R}\oplus\mathbb{O} & \curvearrowleft & \left\{  1\right\}
\end{array}
\]
And we want to show $SU(2,\mathbb{O})/\left\{  \pm I_{2}\right\}
=\widehat{Spin(9)}/\left\{  \pm Id\right\}  $ is $SO\left(  9\right)  \ $in below.

Now by considering the Hopf map $\Phi\ $for $g\left(  r,u\right)  v$, we can
consider an induced operation of $g\left(  r,u\right)  \ $to $\pi\left(
\mathcal{H}_{2}^{0}(\mathbb{O})\right)  =\mathbb{R\oplus O}$ which turns out a
reflection of $\mathbb{R\oplus O}$ by following Proposition.

\begin{proposition}
\label{OcRefProp}For each $\left(  r,u\right)  \in\mathbb{R\oplus O}\left(
1\right)  ,$ $g\left(  r,u\right)  \ $induces a reflection
\[
2%
\begin{pmatrix}
r\\
\bar{u}%
\end{pmatrix}%
\begin{pmatrix}
r\\
\bar{u}%
\end{pmatrix}
^{t}-I_{9}%
\]
to $\pi\left(  \mathcal{H}_{2}^{0}(\mathbb{F})\right)  =\mathbb{R\oplus O}%
\ $via the Hopf map $\Phi$.
\end{proposition}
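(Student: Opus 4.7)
The plan is to prove this by a direct computation, reducing everything to the two identities bundled into Lemma~\ref{HopfOctKeyLemma}. Fix $(r,u)\in\mathbb{R}\oplus\mathbb{O}(1)$, take any $v=(x,y)^{t}\in\mathbb{O}^{2}$, and set $(\lambda,\alpha):=\pi\Phi(v)=\bigl((|x|^{2}-|y|^{2})/2,\,x\bar{y}\bigr)$. By Proposition~\ref{OctO1SuProp} the value $\pi\Phi(g(r,u)v)$ depends only on $(\lambda,\alpha)$, so defines an induced map on $\mathbb{R}\oplus\mathbb{O}$; the task is to show that this induced map equals the operator $2\binom{r}{\bar{u}}\binom{r}{\bar{u}}^{t}-I_{9}$ applied to $(\lambda,\alpha)$.

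First, plugging into the definition of $g(r,u)$ gives $g(r,u)v=\binom{rx+\bar{u}y}{ux-ry}$, so the two coordinates of $\pi\Phi(g(r,u)v)$ are precisely $\tfrac{1}{2}\bigl(|rx+\bar{u}y|^{2}-|ux-ry|^{2}\bigr)$ and $(rx+\bar{u}y)\overline{(ux-ry)}$. Lemma~\ref{HopfOctKeyLemma} rewrites these two quantities as $(r^{2}-|u|^{2})\lambda+2r\langle\bar{u},\alpha\rangle$ and $2r\bar{u}\lambda-\alpha+2\bar{u}\langle\bar{u},\alpha\rangle$ respectively, expressing the induced action purely in terms of $(\lambda,\alpha)$.

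Second, view $\binom{r}{\bar{u}}$ as a column vector in $\mathbb{R}^{9}=\mathbb{R}\oplus\mathbb{O}$, with $\mathbb{O}$ carrying its real inner product $\langle a,b\rangle=\operatorname{Re}(a\bar{b})$. Then $\binom{r}{\bar{u}}^{t}\binom{\lambda}{\alpha}=r\lambda+\langle\bar{u},\alpha\rangle$, and a straightforward expansion yields
\[
\Bigl(2\tbinom{r}{\bar{u}}\tbinom{r}{\bar{u}}^{t}-I_{9}\Bigr)\tbinom{\lambda}{\alpha}=\tbinom{(2r^{2}-1)\lambda+2r\langle\bar{u},\alpha\rangle}{2r\bar{u}\lambda-\alpha+2\bar{u}\langle\bar{u},\alpha\rangle}.
\]
Comparing this with the output of Step~1 reduces the proposition to the single identity $2r^{2}-1=r^{2}-|u|^{2}$, which is exactly the normalization $r^{2}+|u|^{2}=1$ defining $\mathbb{R}\oplus\mathbb{O}(1)$. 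Since $(r,\bar{u})$ thereby has unit length in $\mathbb{R}^{9}$, the operator $2nn^{t}-I$ with $n=(r,\bar{u})^{t}$ fixes $\mathbb{R}\cdot n$ and negates $n^{\perp}$, so it is genuinely a reflection, completing the claim.

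There is no serious obstacle here: the substantive octonionic manipulations, including the appeals to the Moufang identities and to Lemma~\ref{ConjugationLemma2}, are already absorbed into Lemma~\ref{HopfOctKeyLemma}. The only points requiring care are bookkeeping — recognizing that the inner product appearing in the outer-product expansion of $\binom{r}{\bar{u}}\binom{r}{\bar{u}}^{t}$ is the same $\langle\cdot,\cdot\rangle$ used in Lemma~\ref{HopfOctKeyLemma}, and noting that the $-I_{9}$ term is absorbed into the diagonal coefficient $r^{2}-|u|^{2}$ precisely via the unit-sphere constraint.
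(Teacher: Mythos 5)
Your proposal is correct and follows essentially the same route as the paper: both reduce the claim to Lemma~\ref{HopfOctKeyLemma}, which expresses the two coordinates of $\pi\Phi(g(r,u)v)$ purely in terms of $(\lambda,\alpha)=\pi\Phi(v)$, and then identify the resulting linear map with $2\binom{r}{\bar{u}}\binom{r}{\bar{u}}^{t}-I_{9}$ via the constraint $r^{2}+|u|^{2}=1$. Your explicit check that the matching comes down to $2r^{2}-1=r^{2}-|u|^{2}$, and your remark on well-definedness of the induced map, are sound and consistent with the paper's argument.
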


\bigskip Note that $%
\begin{pmatrix}
r\\
\bar{u}%
\end{pmatrix}
$ is considered in $\mathbb{R}^{9}$ and $%
\begin{pmatrix}
r\\
\bar{u}%
\end{pmatrix}
^{t}%
\begin{pmatrix}
s\\
w
\end{pmatrix}
=rs+\left\langle \bar{u},w\right\rangle $ for $\left(  s,w\right)
\in\mathbb{R\oplus O}$.

\begin{proof}
For $\left(  x,y\right)  \in\mathbb{O}^{2}$, we have
\[
\pi\Phi\left(  g\left(  r,u\right)
\begin{pmatrix}
x\\
y
\end{pmatrix}
\right)  =\left(  \frac{1}{2}\left(  \left\vert rx+\bar{u}y\right\vert
^{2}-\left\vert ux-ry\right\vert ^{2}\right)  ,\left(  rx+\bar{u}y\right)
\overline{\left(  ux-ry\right)  }\right)  \in\mathbb{R\oplus O}\text{.}%
\]

By Lemma \ref{HopfOctKeyLemma}, we have%
\[
\frac{1}{2}\left(  \left\vert rx+\bar{u}y\right\vert ^{2}-\left\vert
ux-ry\right\vert ^{2}\right)  =\left(  r^{2}-\left\vert u\right\vert
^{2}\right)  \left(  \frac{\left\vert x\right\vert ^{2}-\left\vert
y\right\vert ^{2}}{2}\right)  +2r\left\langle \bar{u},x\bar{y}\right\rangle
\]%
\[
\left(  rx+\bar{u}y\right)  \overline{\left(  ux-ry\right)  }=2r\bar{u}\left(
\frac{\left\vert x\right\vert ^{2}-\left\vert y\right\vert ^{2}}{2}\right)
-x\bar{y}+2\bar{u}\left\langle \bar{u},x\bar{y}\right\rangle .
\]
\ 

Since $\pi\Phi\left(
\begin{pmatrix}
x\\
y
\end{pmatrix}
\right)  =\left(  \frac{\left\vert x\right\vert ^{2}-\left\vert y\right\vert
^{2}}{2},x\bar{y}\right)  $, $g\left(  r,u\right)  $ induced a $\mathbb{R}%
$-linear map on $\mathbb{R\oplus O\ }$written as a $9\times9$ matrix%
\[
\left(
\begin{array}
[c]{cc}%
r^{2}-\left\vert u\right\vert ^{2} & 2r\bar{u}^{t}\\
2r\bar{u} & \bar{u}\bar{u}^{t}%
\end{array}
\right)
\]
where $\bar{u}\ $is considered as a column in $\mathbb{R}^{8}$. Since
$r^{2}+\left\vert u\right\vert ^{2}=1$, this can be rewritten as%
\[
2\left(
\begin{array}
[c]{cc}%
r^{2} & r\bar{u}^{t}\\
r\bar{u} & \bar{u}\bar{u}^{t}%
\end{array}
\right)  -I_{9}\text{.}%
\]

\ $\blacksquare$
\end{proof}

By Proposition\ref{OcRefProp}, we define a map%
\[%
\begin{array}
[c]{cccc}%
\rho_{\mathbb{O}}: & \mathbb{R\oplus O}\left(  1\right)  & \rightarrow &
End_{_{\mathbb{R}}}\left(  \mathbb{R\oplus O}\right) \\
& \left(  r,u\right)  & \longmapsto & 2%
\begin{pmatrix}
r\\
\bar{u}%
\end{pmatrix}%
\begin{pmatrix}
r\\
\bar{u}%
\end{pmatrix}
^{t}-I_{9}.
\end{array}
\]
In the following Lemma, we show $-\rho_{\mathbb{O}}\left(  g\left(
r,u\right)  \right)  \ $is a reflection on $\mathbb{R\oplus O}$.

\begin{lemma}
\label{OctRecVerLemma} For each $\left(  r,u\right)  \in\mathbb{R\oplus
O}\left(  1\right)  ,-\rho_{\mathbb{O}}\left(  \left(  r,u\right)  \right)
\ $is a reflection of $\pi\left(  \mathcal{H}_{2}^{0}(\mathbb{F})\right)
=\mathbb{R\oplus O}\ $to the hyperplane perpendicular to $\left(  r,\bar
{u}\right)  \ $in $\pi\left(  \mathcal{H}_{2}^{0}(\mathbb{F})\right)  $.
\end{lemma}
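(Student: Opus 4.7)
The statement reduces to a direct computation in linear algebra on $\mathbb{R}\oplus\mathbb{O}\simeq\mathbb{R}^9$, so my plan is essentially to rewrite $-\rho_{\mathbb{O}}(r,u)$ in the textbook Householder form $I - 2vv^t$ for a unit vector $v$ and then check that $v = (r,\bar u)^t$.

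First, I would observe from the definition
\[
\rho_{\mathbb{O}}(r,u) = 2\begin{pmatrix} r \\ \bar u \end{pmatrix}\begin{pmatrix} r \\ \bar u \end{pmatrix}^t - I_9
\]
that
\[
-\rho_{\mathbb{O}}(r,u) = I_9 - 2\,vv^t, \qquad v := \begin{pmatrix} r \\ \bar u \end{pmatrix} \in \mathbb{R}\oplus\mathbb{O}.
\]
Here $v$ is treated as a column in $\mathbb{R}^9$ using the identification $\mathbb{O}\simeq\mathbb{R}^8$ as an inner product space, under which the pairing $v^t w$ agrees with $rs + \langle \bar u,w\rangle$ for $w=(s,w')\in\mathbb{R}\oplus\mathbb{O}$, as noted just after the proposition.

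Next, I would verify that $v$ is a unit vector: by the defining condition on $\mathbb{R}\oplus\mathbb{O}(1)$ we have $r^2 + |u|^2 = 1$, and since conjugation on $\mathbb{O}$ preserves the norm, $|\bar u|=|u|$, so $\|v\|^2 = r^2 + |\bar u|^2 = 1$.

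Finally, applying $I_9 - 2vv^t$ to an arbitrary $w\in\mathbb{R}\oplus\mathbb{O}\simeq\mathbb{R}^9$ yields
\[
(I_9 - 2vv^t)\,w \;=\; w - 2(v^t w)\,v \;=\; w - 2\langle w,v\rangle\, v,
\]
which is precisely the standard Householder reflection across the hyperplane $v^{\perp}$; that is, across the hyperplane in $\mathbb{R}\oplus\mathbb{O}$ perpendicular to $(r,\bar u)$. Since there is no essential obstacle beyond recognizing the Householder form, the main thing to be careful about is the identification of the inner product on $\mathbb{R}\oplus\mathbb{O}$ with the Euclidean inner product on $\mathbb{R}^9$ so that $v^t w = rs + \langle\bar u,w'\rangle$; this identification is built into the norm $|(\lambda,\alpha)| = \sqrt{|\lambda|^2+|\alpha|^2}$ fixed earlier, so the step is transparent.
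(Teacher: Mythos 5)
Your proof is correct and is essentially the same argument as the paper's: the paper verifies the reflection property by checking that vectors perpendicular to $(r,\bar{u})$ are fixed and vectors parallel to it are negated (using $r^{2}+|u|^{2}=1$), which is exactly the expanded verification of your Householder identity $I_{9}-2vv^{t}$ with $v=(r,\bar{u})^{t}$ a unit vector. Your packaging via the standard Householder form is a clean shortcut, but there is no substantive difference in approach.
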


\begin{proof}
If $\left(  s,w\right)  \in\mathbb{R\oplus O\ }$perpendicular to $\left(
r,\bar{u}\right)  \ $as vectors in $\mathbb{R}^{9}$, namely,%
\[%
\begin{pmatrix}
r\\
\bar{u}%
\end{pmatrix}
^{t}%
\begin{pmatrix}
s\\
w
\end{pmatrix}
=0\text{,}%
\]
then
\[
-\rho\left(  \left(  r,u\right)  \right)
\begin{pmatrix}
s\\
w
\end{pmatrix}
=\left(  -2%
\begin{pmatrix}
r\\
\bar{u}%
\end{pmatrix}%
\begin{pmatrix}
r\\
\bar{u}%
\end{pmatrix}
^{t}+I_{9}\right)
\begin{pmatrix}
s\\
w
\end{pmatrix}
=%
\begin{pmatrix}
s\\
w
\end{pmatrix}
\text{.}%
\]
And if $\left(  s,w\right)  \in\mathbb{R\oplus O\ }$parallel to $\left(
r,\bar{u}\right)  \ $as vectors in $\mathbb{R}^{9}$, namely,%
\[%
\begin{pmatrix}
s\\
w
\end{pmatrix}
=c%
\begin{pmatrix}
r\\
\bar{u}%
\end{pmatrix}
\text{\ for }c\in\mathbb{R}%
\]
then
\begin{align*}
-\rho\left(  \left(  r,u\right)  \right)
\begin{pmatrix}
s\\
w
\end{pmatrix}
&  =\left(  -2%
\begin{pmatrix}
r\\
\bar{u}%
\end{pmatrix}%
\begin{pmatrix}
r\\
\bar{u}%
\end{pmatrix}
^{t}+I_{9}\right)
\begin{pmatrix}
cr\\
c\bar{u}%
\end{pmatrix}
\\
&  =-c%
\begin{pmatrix}
r\\
\bar{u}%
\end{pmatrix}
=-%
\begin{pmatrix}
s\\
w
\end{pmatrix}
\text{.}%
\end{align*}
Thus $-\rho\left(  \left(  r,u\right)  \right)  \ $is a reflection of
$\pi\left(  \mathcal{H}_{2}^{0}(\mathbb{F})\right)  =\mathbb{R\oplus O}$ of
the hyperplane perpendicular to $\left(  r,\bar{u}\right)  $. \ $\blacksquare$
\end{proof}

\begin{corollary}
\label{OctRecCoro}
\[
\left\{  \left.  -\rho_{\mathbb{O}}\left(  r,u\right)  \right\vert \ \left(
r,u\right)  \in\mathbb{R\oplus O}\left(  1\right)  \text{ }\right\}  \subset
End_{_{\mathbb{R}}}\left(  \mathbb{R\oplus O}\right)
\]
generates $O\left(  \mathbb{R\oplus O}\right)  =O\left(  9\right)  $.
\end{corollary}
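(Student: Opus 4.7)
The plan is to combine Lemma \ref{OctRecVerLemma} with the classical Cartan--Dieudonn\'e theorem. The lemma already identifies each element $-\rho_{\mathbb{O}}(r,u)$ as a hyperplane reflection of $\mathbb{R}\oplus\mathbb{O}\simeq\mathbb{R}^9$: specifically, it is the reflection across the hyperplane perpendicular to the unit vector $(r,\bar u)$. So the task reduces to showing that every hyperplane reflection of $\mathbb{R}^9$ is realized in this family, and then invoking the fact that $O(9)$ is generated by its reflections.

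First I would check the parametrization. The condition $(r,u)\in\mathbb{R}\oplus\mathbb{O}(1)$ means $r^2+|u|^2=1$, and since $|\bar u|=|u|$, the assignment $(r,u)\mapsto(r,\bar u)$ is a bijection of this $8$-sphere onto the full unit sphere $S^8\subset\mathbb{R}^9$. Thus as $(r,u)$ sweeps over $\mathbb{R}\oplus\mathbb{O}(1)$, the vector $(r,\bar u)$ sweeps over every unit vector in $\mathbb{R}^9$. Combined with Lemma \ref{OctRecVerLemma}, this shows that every hyperplane reflection of $\mathbb{R}^9$ occurs as $-\rho_{\mathbb{O}}(r,u)$ for some $(r,u)\in\mathbb{R}\oplus\mathbb{O}(1)$.

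Next I would appeal to the Cartan--Dieudonn\'e theorem: any orthogonal transformation of a nondegenerate $n$-dimensional real inner product space is a product of at most $n$ reflections across hyperplanes. Applied to $\mathbb{R}^9\simeq\mathbb{R}\oplus\mathbb{O}$, this immediately gives that the collection of reflections $\{-\rho_{\mathbb{O}}(r,u):(r,u)\in\mathbb{R}\oplus\mathbb{O}(1)\}$ generates $O(\mathbb{R}\oplus\mathbb{O})=O(9)$, which is the desired conclusion.

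I do not expect a real obstacle here, since the corollary is essentially a packaging statement: the genuine content was in Proposition \ref{OcRefProp} and Lemma \ref{OctRecVerLemma}, where the octonionic identities of Lemma \ref{HopfOctKeyLemma} had to be used to identify $-\rho_{\mathbb{O}}(r,u)$ with a reflection. The only mild point to state carefully is the surjectivity of $(r,u)\mapsto(r,\bar u)$ from $\mathbb{R}\oplus\mathbb{O}(1)$ onto $S^8$, but this is immediate from the norm-preserving property of octonionic conjugation. Once that and Cartan--Dieudonn\'e are cited, the corollary follows.
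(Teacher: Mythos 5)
Your argument is correct and is essentially the paper's own proof: both identify each $-\rho_{\mathbb{O}}(r,u)$ as a hyperplane reflection via Lemma \ref{OctRecVerLemma}, observe that every hyperplane reflection of $\mathbb{R}\oplus\mathbb{O}$ arises this way, and conclude by the Cartan--Dieudonn\'e theorem. Your explicit check that $(r,u)\mapsto(r,\bar u)$ covers all of $S^8$ is a detail the paper leaves implicit, but it is the same route.
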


\begin{proof}
Each reflection of $\mathbb{R\oplus O}$ can be given as $-\rho_{\mathbb{O}%
}\left(  r,u\right)  $ for a $\left(  r,u\right)  \in\mathbb{R\oplus O}$ with
$r^{2}+\left\vert u\right\vert ^{2}=1$. By Cartan-Dieudonn\'{e} Theorem,
$O\left(  \mathbb{R\oplus O}\right)  $ is generated by%
\[
\left\{  \left.  -\rho_{\mathbb{O}}\left(  r,u\right)  \right\vert \ \left(
r,u\right)  \in\mathbb{R\oplus O}\left(  1\right)  \text{ }\right\}  \text{.}%
\]
\ $\blacksquare$ $\mathbb{\ }$
\end{proof}

Since $-\rho_{\mathbb{O}}\left(  r,u\right)  \ $is a reflection of
$\mathbb{R\oplus O}$,$\ $
\[
\det\rho_{\mathbb{O}}\left(  r,u\right)  =\det\left(  -\left(  -\rho
_{\mathbb{O}}\left(  r,u\right)  \right)  \right)  =\left(  -1\right)
^{9}\left(  -1\right)  =1
\]
for a $9\times9$ matrix $\rho_{\mathbb{O}}\left(  r,u\right)  $. Thus
$\rho_{\mathbb{O}}\left(  r,u\right)  \ $is an element of $SO\left(  9\right)
$. Moreover because$\ \widehat{Spin(9)}\ $is generated by
\[
\left\{  \left.  g\left(  r,u\right)  \right\vert \ r\in\mathbb{R}%
\text{,}\ u\in\mathbb{O}\text{, and }r^{2}+\left\vert u\right\vert
^{2}=1\text{ }\right\}
\]
as the Remark of Lemma \ref{Spin9Lemma}\thinspace, the map $\rho_{\mathbb{O}%
}\ $can be extended to a group homomorphism
\[%
\begin{array}
[c]{cccc}%
\Gamma_{\mathbb{O}}: & \ \widehat{Spin(9)} & \rightarrow & SO\left(
\mathbb{R\oplus O}\right)  \subset End_{_{\mathbb{R}}}\left(  \mathbb{R\oplus
O}\right)  .
\end{array}
\]
By Corollary \ref{OctRecCoro}, $\Gamma_{\mathbb{O}}\ $is onto and
$\Gamma_{\mathbb{O}}^{-1}\left(  I_{9}\right)  =\left\{  \pm Id\right\}  $.
Therefore $\Gamma_{\mathbb{O}}\ $is a $2$-covering to $SO\left(
\mathbb{R\oplus O}\right)  $.

In summary we obtain%
\[
\underset{\text{\textbf{Diagram AO}}}{%
\begin{array}
[c]{ccccccc}%
\mathbb{R\oplus O}\left(  1\right)  & \underrightarrow{g} & \widehat
{Spin(9)}\simeq SU(2,\mathbb{O}) & \curvearrowright & \mathbb{O}^{2} &
\curvearrowleft & \mathbb{O}(1)\\
&  &  &  &  &  & \\
&  & \Gamma_{\mathbb{O}}\downarrow &  & \pi\Phi\downarrow &  & \downarrow\\
&  &  &  &  &  & \\
&  & \widehat{Spin(9)}/\left\{  \pm Id\right\}  \simeq SO\left(
\mathbb{R\oplus O}\right)  & \curvearrowright & \mathbb{R}\oplus\mathbb{O} &
\curvearrowleft & \left\{  1\right\}
\end{array}
}%
\]

\section{Stiefel manifolds and Polygon spaces for $\mathbb{F}=\mathbb{R}%
,\mathbb{C},\mathbb{H}$}

Homeomorphisms of polygon spaces $\mathcal{P}_{k}(\mathbb{R}^{3})$ and complex
2-grassamannians gives a bridge to explore the geometry of polygon spaces by
borrowing the geometric structure of the grassmannians (\cite{HK}). Hausmann
and Knutson showed that
\[
\mathcal{P}_{k}(\mathbb{R}^{3})\simeq U(1)^{k}\backslash Gr_{\mathbb{C}}(2,k)
\]
using the Hopf map $\mathbb{H}\rightarrow Im(\mathbb{H})$ by $q\mapsto
\overline{q}iq$. In addition, a restriction of the Hopf map on $\mathbb{C}%
\subset\mathbb{H}$ gives homeomorphisms
\[
\mathcal{P}_{k}(\mathbb{R}^{2})\simeq O(1)^{k}\backslash Gr_{\mathbb{R}}(2,k)
\]
and
\[
\widetilde{\mathcal{P}}_{k}(\mathbb{R}^{2})\simeq O(1)^{k}\backslash
\widetilde{Gr}_{\mathbb{R}}(2,k)
\]
where $\widetilde{Gr}_{\mathbb{R}}(2,k)$ is the oriented grassmannian.
Moreover the authors of \cite{HK}\ expected the octonionic version of their
construction of polygon spaces.

In this section, we introduce another approach along the spin representation
introduced in Table 1. From approach, we introduce another Hopf map so that
polygon spaces for $\mathbb{R}^{2}$\ and $\mathbb{R}^{3}$ in \cite{HK}$\ $can
be considered as ones for $\mathbb{R}\oplus\mathbb{R}$ and $\mathbb{R}%
\oplus\mathbb{C}$ induced by $Spin\left(  2\right)  \simeq SO(2)\ $and
$Spin\left(  3\right)  \simeq SU(2)$ representation along our Hopf maps.
Therefrom, we also provide a way connecting polygon space over $\mathbb{R}%
^{2},\mathbb{R}^{3}$ and $2$-grassmannians by the spin representation of
$Spin\left(  2\right)  \ $and $Spin\left(  3\right)  $. From this point view,
the construction of polygon space for $\mathbb{H}$ in \cite{HK}$\ $is rather
for $\mathbb{C}^{2}$, and it is natural to consider the extension to
$\mathbb{H}^{2}$ and $\mathbb{O}^{2}$ via $Spin\left(  5\right)  \simeq
Sp(2)\ $and $Spin\left(  9\right)  \simeq SU(2,\mathbb{O}\dot{)}$. The
connection of polygon spaces over $\mathbb{R}^{2},\mathbb{R}^{3}$ and
$2$-grassmannians is shown from the spin representation, and the related
extensions along the normed division algebras are presented.

\subsection{Polygon spaces and $\mathbb{R},\mathbb{C}$, $\mathbb{H}$}

From our approach, we show how the polygon spaces $\widetilde{\mathcal{P}}%
_{k}(\mathbb{R}^{n})\ $for $n=2,3,5\ $are connected to $2$-Grassmannians.

In \cite{HK}, Hausmann and Knutson studied the homeomorphism between the
$3$-dimensional polygon space and the complex $2$-Grassmannian, and posed an
open question regarding the extension of their approach. Specifically, they
asked whether their construction, based on the Hopf map $\mathbb{H}\rightarrow
Im(\mathbb{H})$ defined on the quaternions, could be generalized to an
octonionic setting. The direction of extending Theorem \ref{thm1} using the
spin representation provides a possible answer to this open question. In the
case $\mathbb{F}=\mathbb{H},$ it is important to note here that, unlike
$\mathbb{R}$ and $\mathbb{C}$, the quaternions $\mathbb{H}$ are
non-commutative, and this must be carefully taken into account throughout the construction.

Define a surjection $\Phi^{k}$ by extending the composition of the Hopf map
$\Phi$ and isomorphism $\pi$ on $2$-Stiefel manifold: let $\mathbb{F}%
=\mathbb{R},\mathbb{C}$, $\mathbb{H}$ and $V=\mathbb{R}^{2},\mathbb{R}^{3}$,
$\mathbb{R}^{5}$ respectively,%
\[%
\begin{array}
[c]{cccc}%
\Phi^{k}: & V_{\mathbb{F}}(2,k) & \rightarrow & \mathcal{M}_{k}(\mathbb{R}%
^{1+dim_{\mathbb{R}}\mathbb{F}})\\
&
\begin{pmatrix}
x_{1} & \cdots & x_{k}\\
y_{1} & \cdots & y_{k}%
\end{pmatrix}
& \mapsto & \left(  \pi\circ\Phi\left(
\begin{pmatrix}
x_{1}\\
y_{1}%
\end{pmatrix}
\right)  \cdots\pi\circ\Phi\left(
\begin{pmatrix}
x_{k}\\
y_{k}%
\end{pmatrix}
\right)  \right)
\end{array}
\]
Here, for any $i$, the composition $\pi\circ\Phi\left(
\begin{pmatrix}
x_{i}\\
y_{i}%
\end{pmatrix}
\right)  $ maps to $\mathbb{R}\oplus\mathbb{F}$. By naturally identifying
$\mathbb{R}\oplus\mathbb{F}$ with $\mathbb{R}^{1+dim_{\mathbb{R}}\mathbb{F}}%
$\ and, for convenience, we will use the same notation $\pi\circ\Phi$ for
$\mathbb{F}^{2}\rightarrow\mathbb{R}\oplus\mathbb{F}\rightarrow\mathbb{R}%
^{1+dim_{\mathbb{R}}\mathbb{F}}$ to define $\Phi^{k}$.

\begin{lemma}
\label{Phik} Let $\mathbb{F}=\mathbb{R},\mathbb{C}$, $\mathbb{H}$ and for each
case, let the corresponding $V$ be $\mathbb{R}^{1+dim_{\mathbb{R}}\mathbb{F}}%
$. Then:

(1) $\Phi^{k}$ is surjective.

(2) For each $[P]\in\mathcal{M}_{k}(V)$, $(\Phi^{k})^{-1}([P])\simeq
\mathbb{F}(1)^{l}$ for some $2\leq l\leq k$. In the case of non-degenerate
$[P]$, we have $l=k$.
\end{lemma}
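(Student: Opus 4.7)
The plan is to verify (1) and (2) columnwise, reducing everything to the single-column results Lemma \ref{norm} and Lemma \ref{surjection}, while keeping track of how the Stiefel condition $XX^{\ast}=I_{2}$ interacts with the Hopf map $\pi\circ\Phi$.

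First I would unpack what the Stiefel condition says for $X$ with columns $(x_{i},y_{i})^{t}$: namely $\sum|x_{i}|^{2}=1$, $\sum|y_{i}|^{2}=1$, and $\sum x_{i}\overline{y_{i}}=0$. Combined with the explicit formula $\pi\circ\Phi(x_{i},y_{i})^{t}=\left(\tfrac{|x_{i}|^{2}-|y_{i}|^{2}}{2},\,x_{i}\overline{y_{i}}\right)$, these three identities immediately give both closedness of the image polygon and total length $1$ (via Lemma \ref{norm}), confirming that $\Phi^{k}$ lands in $\mathcal{M}_{k}(V)$ in the first place.

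For surjectivity in (1), I would take a polygon $[P]$ with edges $v_{i}=(\lambda_{i},\alpha_{i})$ and, column by column, pick preimages $(x_{i},y_{i})$ via Lemma \ref{surjection} with the normalization $|x_{i}|^{2}+|y_{i}|^{2}=2|v_{i}|$. Then $\sum|x_{i}|^{2}=\sum|y_{i}|^{2}=1$ follows from $\sum|v_{i}|=1$ together with $\sum\lambda_{i}=0$, while the cross condition $\sum x_{i}\overline{y_{i}}=\sum\alpha_{i}=0$ is exactly the closedness of $[P]$; so the assembled matrix automatically sits in $V_{\mathbb{F}}(2,k)$. For (2), Lemma \ref{surjection} applied columnwise says that any two preimages of $[P]$ must agree up to right multiplication by some $c_{i}\in\mathbb{F}(1)$ on each column with $v_{i}\neq 0$, and are both forced to be the zero column when $v_{i}=0$. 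This identifies the fiber with $\mathbb{F}(1)^{l}$, where $l$ counts the nonzero edges of $[P]$. I would then verify that the diagonal right action $X\mapsto X\cdot\operatorname{diag}(c_{1},\ldots,c_{k})$ preserves the Stiefel condition since $DD^{\ast}=\operatorname{diag}(|c_{i}|^{2})=I_{k}$, so the candidate $\mathbb{F}(1)^{l}$-family really sits inside $V_{\mathbb{F}}(2,k)$. The bound $l\geq 2$ is immediate: a $k$-gon is by definition a nonzero map whose edges sum to zero, which rules out exactly one nonzero edge; and non-degeneracy means precisely $l=k$.

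The point I expect to require the most care is the noncommutative case $\mathbb{F}=\mathbb{H}$. One has to check that the right diagonal multiplication on $X$ commutes through $XX^{\ast}=I_{2}$ without needing the entries of $X$ and $D$ to commute, and that distinct preimages of a nonzero column differ by a \emph{single} right $\mathbb{F}(1)$-factor common to both coordinates, rather than by independent factors on $x_{i}$ and $y_{i}$. Both facts use associativity of $\mathbb{F}$ (it is already implicit in the fiber description of Lemma \ref{surjection}), and this is exactly the feature that fails for $\mathbb{O}$ and forces the separate octonionic construction in the next section.
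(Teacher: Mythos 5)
Your proof is correct and follows essentially the same route as the paper: reduce everything columnwise to Lemma \ref{surjection}, with the Stiefel condition $XX^{\ast}=I_{2}$ translating into closedness and unit perimeter of the image polygon, and the fiber over $[P]$ identified with $\mathbb{F}(1)^{l}$ where $l$ counts the nonzero edges. You in fact supply a few details the paper leaves implicit --- the verification that columnwise preimages assemble into an element of $V_{\mathbb{F}}(2,k)$, the justification of $l\geq 2$ from the closing condition, and the compatibility of the diagonal right $\mathbb{F}(1)^{k}$-action with $XX^{\ast}=I_{2}$ in the quaternionic case --- all of which are accurate.
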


\begin{proof}
(1) The orthonormality condition of $V_{\mathbb{F}}(2,k)$ corresponds, on the
polygon space side, to the condition that the polygons are closed and have
fixed perimeter $1$. For $A\in V_{\mathbb{F}}(2,k)$, because
\[
I_{2}=AA^{\ast}=%
\begin{pmatrix}
x_{1} & \cdots & x_{k}\\
y_{1} & \cdots & y_{k}%
\end{pmatrix}%
\begin{pmatrix}
x_{1} & \cdots & x_{k}\\
y_{1} & \cdots & y_{k}%
\end{pmatrix}
^{\ast}%
\]
we have
\[
\sum_{i=1}^{k}x_{i}^{2}=\sum_{i=1}^{k}y_{i}^{2}=1,\sum_{i=1}^{k}x_{i}\bar
{y}_{i}=0\text{.}%
\]
Therefore, the polygons%
\[
\Phi^{k}\left(
\begin{pmatrix}
x_{1} & \cdots & x_{k}\\
y_{1} & \cdots & y_{k}%
\end{pmatrix}
\right)  =\left(  \frac{\left\vert x_{1}\right\vert ^{2}-\left\vert
y_{1}\right\vert ^{2}}{2}+x_{1}\bar{y}_{1},\cdots,\frac{\left\vert
x_{k}\right\vert ^{2}-\left\vert y_{k}\right\vert ^{2}}{2}+x_{k}\bar{y}%
_{k}\right)  \in\left(  \mathbb{R}\oplus\mathbb{F}\right)  ^{k}\text{,}%
\]
are closed since
\[
\sum_{i=1}^{k}\left(  \frac{\left\vert x_{i}\right\vert ^{2}-\left\vert
y_{i}\right\vert ^{2}}{2}+x_{i}\bar{y}_{i}\right)  =\frac{1}{2}\left(
\sum_{i=1}^{k}\left\vert x_{i}\right\vert ^{2}-\sum_{i=1}^{k}\left\vert
y_{i}\right\vert ^{2}\right)  +\sum_{i=1}^{k}x_{i}\bar{y}_{i}=0
\]
and have fixed perimeter $1\ $because%
\begin{align*}
\sum_{i=1}^{k}\left\vert \frac{\left\vert x_{i}\right\vert ^{2}-\left\vert
y_{i}\right\vert ^{2}}{2}+x_{i}\bar{y}_{i}\right\vert  &  =\sum_{i=1}^{k}%
\sqrt{\left(  \frac{\left\vert x_{i}\right\vert ^{2}-\left\vert y_{i}%
\right\vert ^{2}}{2}\right)  ^{2}+\left\vert x_{i}\bar{y}_{i}\right\vert ^{2}%
}\\
&  =\sum_{i=1}^{k}\frac{\left\vert x_{i}\right\vert ^{2}+\left\vert
y_{i}\right\vert ^{2}}{2}=1\text{.}%
\end{align*}
Therefore, it is clear that the image of $\Phi^{k}$ is contained in
$\mathcal{M}_{k}(V)$.

Conversely, to show that $\Phi^{k}$ is surjective, we consider the way to
recover an orthonormal $2$-frame from a given polygon in $\mathcal{M}_{k}(V)$.
Each edge of the polygon corresponds to a vector $\vec{v_{i}}$, and finding a
preimage of the polygon amounts to finding a pair $%
\begin{pmatrix}
x_{i}\\
y_{i}%
\end{pmatrix}
\in\mathbb{F}^{2}$ such that $(\pi\circ\Phi)\left(
\begin{pmatrix}
x_{i}\\
y_{i}%
\end{pmatrix}
\right)  =\vec{v_{i}}$. This is exactly the inverse image under $\pi\circ\Phi$
for each $i=1,\cdots,k$, which exists by Lemma \ref{surjection}. Therefore,
$\Phi^{k}$ is surjective onto $\mathcal{M}_{k}(V)$.

The structure of the fiber of $\Phi^{k}$ also follows directly from Lemma
\ref{surjection}. Recall that for $(\lambda,\alpha)\neq(0,0)$, we have
$\Phi^{-1}\left(
\begin{pmatrix}
\lambda & \alpha\\
\overline{\alpha} & -\lambda
\end{pmatrix}
\right)  \simeq\mathbb{F}(1)$, while for $(\lambda,\alpha)=(0,0)$,$\ \Phi
^{-1}\left(
\begin{pmatrix}
0 & 0\\
0 & 0
\end{pmatrix}
\right)  \simeq\left\{
\begin{pmatrix}
0\\
0
\end{pmatrix}
\right\}  $.

However, by the definition of $\mathcal{M}_{k}(V)$, zero polygons (i.e.,
polygons with all edges zero) are excluded. Therefore, none of the entries in
a polygon configuration can correspond to the zero element in $\mathbb{R}%
\oplus\mathbb{F\ }$simultaneously. As a result, for a degenerate polygon
$[P]$, the fiber of $\Phi^{k}$ is isomorphic to $\mathbb{F}(1)^{l}$ for some
$l<k$, while for a non-degenerate polygon, the fiber is isomorphic to
$\mathbb{F}(1)^{k}$.\ \ \ \ \ $\blacksquare$
\end{proof}

As both the $\mathbb{F}(1)$ and the special unitary group $SU(2,\mathbb{F})$
act on $\mathbb{F}^{2}$, these actions naturally extend to $V_{\mathbb{F}%
}(2,k)$. Since the fiber $\mathbb{F}(1)^{k}$ of $\Phi^{k}$ also has a natural
group structure, and then we can define a right action of $\mathbb{F}(1)^{k}$
on the Stiefel manifold $V_{\mathbb{F}}(2,k)$. For $c=(c_{1},\cdots,c_{k}%
)\in\mathbb{F}(1)^{k}$ and $X=%
\begin{pmatrix}
x_{1} & \cdots & x_{k}\\
y_{1} & \cdots & y_{k}%
\end{pmatrix}
\in V_{\mathbb{F}}(2,k)$, the right action is defined column-wise by\quad\
\[
X\cdot c:=\left(
\begin{pmatrix}
x_{1}\\
y_{1}%
\end{pmatrix}
\cdot c_{1}\text{,}\cdots,%
\begin{pmatrix}
x_{k}\\
y_{k}%
\end{pmatrix}
\cdot c_{k}\right)  .
\]
This defines an equivalence relation on $V_{\mathbb{F}}(2,k)$, and yields the
following diagram with a well-defined map $\widetilde{\Phi^{k}}$ satisfying
$\Phi^{k}=\widetilde{\Phi^{k}}\circ\pi$.%

\[%
\begin{matrix}
\Phi^{k}: & V_{\mathbb{F}}(2,k) & \longrightarrow & \mathcal{M}_{k}(V)\\
&  &  & \\
& \pi\ \big\downarrow & \nearrow & \widetilde{\Phi^{k}}\\
&  &  & \\
& V_{\mathbb{F}}(2,k)/\mathbb{F}(1)^{k} &  &
\end{matrix}
\]

The special unitary groups $SU(2,\mathbb{F})$ also acts on $V_{\mathbb{F}%
}(2,k)$ by applying its standard action on $\mathbb{F}^{2}$ to each column.
That is, for $A\in SU(2,\mathbb{F})$ and $X=%
\begin{pmatrix}
x_{1} & \cdots & x_{k}\\
y_{1} & \cdots & y_{k}%
\end{pmatrix}
\in V_{\mathbb{F}}(2,k)$, we define
\[
A\cdot X:=\left(  A%
\begin{pmatrix}
x_{1}\\
y_{1}%
\end{pmatrix}
\cdots A%
\begin{pmatrix}
x_{k}\\
y_{k}%
\end{pmatrix}
\right)  =AX.
\]
Using the associativity of $\mathbb{F}$, this action is well-defined since
\[
(A\cdot X)(A\cdot X)^{\ast}=(AX)(AX)^{\ast}=(AX)(X^{\ast}A^{\ast})=A(XX^{\ast
})A^{\ast}=I_{2}.
\]

\begin{proposition}
\label{SO} For $\mathbb{F}=\mathbb{R},\mathbb{C}$, $\mathbb{H}$, the standard
action of $SU(2,\mathbb{F})$ on the Stiefel manifold $V_{\mathbb{F}}(2,k)$
induces an action of $SO(1+\dim_{\mathbb{R}}\mathbb{F})$ on the polygon space
$\mathcal{M}_{k}(V)$, where $V$ is the $(1+\dim_{\mathbb{R}}\mathbb{F}%
)$-dimensional Euclidean space.
\end{proposition}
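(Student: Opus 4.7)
The plan is to transport the column-wise spin action via $\pi\circ\Phi$ from $\mathbb{F}^{2}$ to the polygon side, using Proposition \ref{SORCH}. By that proposition, for each $A\in SU(2,\mathbb{F})$ there is a well-defined $\rho(A)\in SO(1+\dim_{\mathbb{R}}\mathbb{F})$ such that $(\pi\circ\Phi)(Av)=\rho(A)\bigl((\pi\circ\Phi)(v)\bigr)$ for all $v\in\mathbb{F}^{2}$, and $A\mapsto\rho(A)$ is the spin double cover, hence surjective onto $SO(1+\dim_{\mathbb{R}}\mathbb{F})$. I would then extend this equivariance column-by-column: for $X=(v_{1},\dots,v_{k})\in V_{\mathbb{F}}(2,k)$, applying $A$ to each column and then $\Phi^{k}$ yields
\[
\Phi^{k}(A\cdot X)=\bigl(\rho(A)(\pi\circ\Phi)(v_{1}),\dots,\rho(A)(\pi\circ\Phi)(v_{k})\bigr),
\]
so $\Phi^{k}(A\cdot X)$ is obtained from $\Phi^{k}(X)$ by acting on every edge by the single element $\rho(A)\in SO(1+\dim_{\mathbb{R}}\mathbb{F})$.

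Next I would verify that the edge-wise action of $\rho(A)$ preserves $\mathcal{M}_{k}(V)$. Closure is immediate from linearity: $\sum_{i}\rho(A)(p_{i})=\rho(A)\sum_{i}p_{i}=0$. Fixed perimeter follows from $\rho(A)\in O(V)$: $\sum_{i}|\rho(A)(p_{i})|=\sum_{i}|p_{i}|=1$. Hence the natural action of $SO(1+\dim_{\mathbb{R}}\mathbb{F})$ on $V$ applied vertex-wise restricts to an action on $\mathcal{M}_{k}(V)$, and the above calculation shows that the $SU(2,\mathbb{F})$-action on $V_{\mathbb{F}}(2,k)$ descends along $\Phi^{k}$ to exactly this action. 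Well-definedness on $\mathcal{M}_{k}(V)$ is trivial once phrased this way, since the edge-wise $SO$-action depends only on the polygon, not on any Stiefel lift.

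The main obstacle is essentially pre-paid by Proposition \ref{SORCH}; the only thing one has to be careful about is that, in the quaternionic case, the well-definedness of the matrix-level $SU(2,\mathbb{H})$-action on $V_{\mathbb{H}}(2,k)$ (i.e.\ $(AX)(AX)^{\ast}=A(XX^{\ast})A^{\ast}=I_{2}$) uses associativity of matrix multiplication over $\mathbb{H}$, which is valid because $\mathbb{H}$ is associative. The exact same associativity is what lets Proposition \ref{SORCH} go through on a single column, and the column-wise extension introduces no further non-commutativity issue. Hence the induced homomorphism $SU(2,\mathbb{F})\to\mathrm{Homeo}(\mathcal{M}_{k}(V))$ factors through $\rho$, and its image is $\rho(SU(2,\mathbb{F}))=SO(1+\dim_{\mathbb{R}}\mathbb{F})$ acting diagonally on the $k$ edges, which is the asserted action.
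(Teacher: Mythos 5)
Your proposal is correct and follows essentially the same route as the paper: apply Proposition \ref{SORCH} column-by-column so that $\Phi^{k}(A\cdot X)$ is obtained from $\Phi^{k}(X)$ by acting on every edge with the single induced element of $SO(1+\dim_{\mathbb{R}}\mathbb{F})$. Your additional checks (closure and fixed perimeter are preserved, surjectivity of the spin covering, associativity in the quaternionic case) are details the paper leaves implicit, but they are the right ones and introduce no new ideas beyond its one-line argument.
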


\begin{proof}
Since $SU(2,\mathbb{F})$ acts column-wise on $V_{\mathbb{F}}(2,k)$, apply the
Proposition \ref{SORCH} to each column. Then there is induced action of
$SO(1+dim_{\mathbb{R}}\mathbb{F})$ on $\mathcal{M}_{k}(V)$, for each edge
vector $\pi\circ\Phi\left(
\begin{pmatrix}
x_{i}\\
y_{i}%
\end{pmatrix}
\right)  \in V$, the group $SO(1+\dim_{\mathbb{R}}\mathbb{F})$ acts on the
vector in $V$.\ \ $\blacksquare$
\end{proof}

The subgroups\ $\{\pm\left(  1,1,\cdots,1\right)  \}$ of $\mathbb{F}(1)^{k}$
and $\{\pm I\}\ $of$\ SU(2,\mathbb{F})$ act identically on the Stiefel
manifold $V_{\mathbb{F}}(2,k)$. Recall that the elements $c\in\mathbb{F}(1)$
that yield the same result under both the right $\mathbb{F}(1)$ action and the
standard action of $SU(2,\mathbb{F})$ on $\mathbb{F}^{2}$ must satisfy $c\in
Z(\mathbb{F})\cap\mathbb{F}(1)$, which turns out to be $\{\pm1\}$. This under
both the right $V_{\mathbb{F}}(2,k)$, where each column corresponds to an
element of $\mathbb{F}^{2}$. Hence, the elements of $\mathbb{F}(1)^{k}$ and
$SU(2,\mathbb{F})$ that act identically on the entire Stiefel manifold are
precisely $\{\pm\left(  1,1,\cdots,1\right)  \}\ $and $\{\pm I\}$.

\begin{theorem}
\label{thm1} For $\mathbb{F}=\mathbb{R},\mathbb{C}$, $\mathbb{H}$, there are
homeomorphisms of polygon spaces:%
\begin{align*}
\widetilde{\mathcal{P}}_{k}(\mathbb{R}^{2})  &  \simeq\widetilde
{Gr}_{\mathbb{R}}(2,k)/O(1)^{k}\\
\widetilde{\mathcal{P}}_{k}(\mathbb{R}^{3})  &  \simeq SU(2)\backslash
V_{\mathbb{C}}(2,k)/U(1)^{k}\\
\widetilde{\mathcal{P}}_{k}(\mathbb{R}^{5})  &  \simeq Gr_{\mathbb{H}%
}(2,k)/\mathbb{H}(1)^{k}%
\end{align*}

\end{theorem}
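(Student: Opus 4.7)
The plan is to derive all three homeomorphisms uniformly from a single intermediate identification
\[
SU(2,\mathbb{F})\backslash V_{\mathbb{F}}(2,k)/\mathbb{F}(1)^{k}\;\simeq\;\widetilde{\mathcal{P}}_{k}(\mathbb{R}^{1+\dim_{\mathbb{R}}\mathbb{F}}),
\]
and then to read off each line of the theorem by rewriting the left-hand quotient case by case. The work is essentially bookkeeping on top of Lemma \ref{Phik} and Proposition \ref{SO}; the genuinely subtle point is showing that $\widetilde{\Phi^{k}}$ is a bijection (not just a surjection with generic fiber $\mathbb{F}(1)^{k}$).

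First, I will use Lemma \ref{Phik}: the map $\Phi^{k}$ is a continuous surjection onto $\mathcal{M}_{k}(\mathbb{R}^{1+\dim_{\mathbb{R}}\mathbb{F}})$ whose fibers are products $\mathbb{F}(1)^{l}$. The key observation is that each such fiber coincides with a single $\mathbb{F}(1)^{k}$-orbit: on a column corresponding to a zero edge the factor of $\mathbb{F}(1)$ acts trivially (and indeed stabilizes the element $(0,0)^{t}$), so a full $\mathbb{F}(1)^{k}$-orbit of an element with $l$ nonzero columns is precisely $\mathbb{F}(1)^{l}$. Therefore $\widetilde{\Phi^{k}}$ of the commutative diagram displayed before the theorem is a continuous bijection. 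Since $V_{\mathbb{F}}(2,k)$ is compact and $\mathcal{M}_{k}$ is Hausdorff, $\widetilde{\Phi^{k}}$ is a homeomorphism.

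Next, I verify that the left $SU(2,\mathbb{F})$-action on $V_{\mathbb{F}}(2,k)$ commutes with the right $\mathbb{F}(1)^{k}$-action, which reduces columnwise to the identity $A(vc)=(Av)c$ for $A\in SU(2,\mathbb{F})$, $v\in\mathbb{F}^{2}$, $c\in\mathbb{F}(1)$; this uses only associativity of $\mathbb{F}$ and so is valid for $\mathbb{R},\mathbb{C},\mathbb{H}$. Hence $SU(2,\mathbb{F})$ descends to an action on $V_{\mathbb{F}}(2,k)/\mathbb{F}(1)^{k}$. By Proposition \ref{SO} (applied columnwise), $\widetilde{\Phi^{k}}$ intertwines this action with the $SO(1+\dim_{\mathbb{R}}\mathbb{F})$-action on $\mathcal{M}_{k}$; since Proposition \ref{SORCH} shows $SU(2,\mathbb{F})$ surjects onto $SO(1+\dim_{\mathbb{R}}\mathbb{F})$, passing to the left quotient gives a homeomorphism onto $\widetilde{\mathcal{P}}_{k}(\mathbb{R}^{1+\dim_{\mathbb{R}}\mathbb{F}})$, establishing the unified identification.

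Finally, I specialize. For $\mathbb{F}=\mathbb{R}$, $SU(2,\mathbb{R})=SO(2)$ and $SO(2)\backslash V_{\mathbb{R}}(2,k)\simeq\widetilde{Gr}_{\mathbb{R}}(2,k)$ by the preliminaries, giving the first line; for $\mathbb{F}=\mathbb{C}$ the quotient is already in final form $SU(2)\backslash V_{\mathbb{C}}(2,k)/U(1)^{k}$ (since $SU(2)\subsetneq U(2)$, there is no honest Grassmannian simplification); for $\mathbb{F}=\mathbb{H}$, $SU(2,\mathbb{H})=Sp(2)$ and $Sp(2)\backslash V_{\mathbb{H}}(2,k)\simeq Gr_{\mathbb{H}}(2,k)$, giving the third line. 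The main obstacle is really the degenerate-polygon bookkeeping in Step 1; everything else is a matter of recognizing standard quotients of Stiefel manifolds.
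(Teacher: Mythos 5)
Your proposal is correct and follows essentially the same route as the paper: both establish the single identification $SU(2,\mathbb{F})\backslash V_{\mathbb{F}}(2,k)/\mathbb{F}(1)^{k}\simeq\widetilde{\mathcal{P}}_{k}(\mathbb{R}^{1+\dim_{\mathbb{R}}\mathbb{F}})$ via the commutative diagram built from Lemma \ref{Phik} and Proposition \ref{SO}, and then read off the three cases from the standard descriptions of $\widetilde{Gr}_{\mathbb{R}}(2,k)$ and $Gr_{\mathbb{H}}(2,k)$ as quotients of Stiefel manifolds. Your treatment is in fact slightly more careful than the paper's, since you explicitly check that the fibers of $\Phi^{k}$ over degenerate polygons coincide with $\mathbb{F}(1)^{k}$-orbits and invoke compactness to upgrade the continuous bijection to a homeomorphism, points the paper leaves implicit.
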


\begin{proof}
From the above, $\widetilde{\Phi^{k}}$ is a homeomorphism between the quotient
space $V_{\mathbb{F}}(2,k)/\mathbb{F}(1)^{k}$ and the space $\mathcal{M}%
_{k}(V)$ of polygons with fixed side lengths. Furthermore, for the special
unitary group $SU(2,\mathbb{F})$, we consider its group action on
$V_{\mathbb{F}}(2,k)$, which gives rise to another quotient space. Together,
these yield the left column of the diagram below.

On the other side, by Proposition \ref{SO}, there is the induced
$SO(1+dim\mathbb{F})$ action on $\mathcal{M}_{k}(V)$. Taking the quotient by
this action gives the polygon space $\widetilde{\mathcal{P}}_{k}%
(\mathbb{R}^{1+dim\mathbb{F}})$. Therefore, we get a homeomorphism of the
quotient space of the Stiefel manifold and the polygon space. This leads to
the commutative diagram below.%
\[%
\begin{matrix}
V_{\mathbb{F}}(2,k)/\mathbb{F}(1)^{k} & \xrightarrow{ \widetilde{\Phi}^k} &
\mathcal{M}_{k}(\mathbb{R}^{1+dim\mathbb{F}})\\
&  & \\
SU(2,\mathbb{F})\ \big\downarrow &  & \big\downarrow\ SO(1+dim\mathbb{F})\\
&  & \\
SU(2,\mathbb{F})\backslash V_{\mathbb{F}}(2,k)/\mathbb{F}(1)^{k} &
\xrightarrow{\simeq} & \widetilde{\mathcal{P}}_{k}(\mathbb{R}^{1+dim\mathbb{F}%
})
\end{matrix}
\]

\ $\blacksquare$
\end{proof}

\subsection{Polygons spaces and$\ \mathbb{O}$}

We consider octonionic extension of Theorem \ref{thm1}. Since octonions
$\mathbb{O}$ is not associative the constructions for $\mathbb{F}%
=\mathbb{R},\mathbb{C}$, $\mathbb{H\ }$may not be extend to $\mathbb{O\ }%
$directly. Moreover $\mathbb{O}\left(  1\right)  \ $is not a group but a
Moufang loop, and $V_{\mathbb{O}}(k,n)\ $is not considered as the spaces of
$k$-frame in $\mathbb{O}^{n}$.

As in $\mathbb{F}=\mathbb{R},\mathbb{C}$, $\mathbb{H\ }$cases, we identify the
direct sum $\mathbb{R}\oplus\mathbb{O}$ with $\mathbb{R}^{1+8}$, and we define
the map%
\[%
\begin{array}
[c]{cccc}%
\Phi_{\mathbb{O}}^{k}: & V_{\mathbb{O}}(2,k) & \rightarrow & \mathcal{M}%
_{k}(\mathbb{R}^{9})\\
&
\begin{pmatrix}
x_{1} & \cdots & x_{k}\\
y_{1} & \cdots & y_{k}%
\end{pmatrix}
& \mapsto & \left(  \pi\circ\Phi\left(
\begin{pmatrix}
x_{1}\\
y_{1}%
\end{pmatrix}
\right)  \cdots\pi\circ\Phi\left(
\begin{pmatrix}
x_{k}\\
y_{k}%
\end{pmatrix}
\right)  \right)
\end{array}
\]
by using Hopf map $\Phi_{\mathbb{O}}\ $for $\mathbb{O}^{2}$.

By following the argument of Lemma \ref{Phik}, we also obtain the following Lemma.

\begin{lemma}
For $\mathbb{O}$ and the corresponding $V$ be $\mathbb{R}^{1+8}$,

(1) $\Phi_{\mathbb{O}}^{k}$ is surjective.

(2) For each $[P]\in\mathcal{M}_{k}(V)$, $(\Phi_{\mathbb{O}}^{k}%
)^{-1}([P])\simeq\mathbb{O}(1)^{l}$ for some $2\leq l\leq k$. In the case of
non-degenerate $[P]$, we have $l=k$.
\end{lemma}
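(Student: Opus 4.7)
The plan is to mirror the proof of Lemma \ref{Phik} using the octonionic Hopf map $\Phi_{\mathbb{O}}$ and the $\mathbb{O}(1)$-multiplication on $\mathbb{O}^{2}$ defined in the previous section, while flagging the points where the non-associativity of $\mathbb{O}$ forces an extra argument.

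First, for the image of $\Phi_{\mathbb{O}}^{k}$. Given $X=\begin{pmatrix}x_{1}&\cdots&x_{k}\\ y_{1}&\cdots&y_{k}\end{pmatrix}\in V_{\mathbb{O}}(2,k)$, the condition $XX^{\ast}=I_{2}$ unpacks (entry by entry, using only products of at most two octonions at a time, so associativity is not invoked) to $\sum_{i}|x_{i}|^{2}=\sum_{i}|y_{i}|^{2}=1$ and $\sum_{i}x_{i}\bar{y}_{i}=0$. Plugging these into the explicit formula $\pi\circ\Phi_{\mathbb{O}}\begin{pmatrix}x_{i}\\ y_{i}\end{pmatrix}=\left(\tfrac{|x_{i}|^{2}-|y_{i}|^{2}}{2},x_{i}\bar{y}_{i}\right)$ immediately gives $\sum_{i}\pi\circ\Phi_{\mathbb{O}}\begin{pmatrix}x_{i}\\ y_{i}\end{pmatrix}=0$ (closed) and, by Lemma \ref{norm} applied entrywise, the perimeter $\sum_{i}\tfrac{|x_{i}|^{2}+|y_{i}|^{2}}{2}=1$. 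So $\Phi_{\mathbb{O}}^{k}$ lands inside $\mathcal{M}_{k}(\mathbb{R}^{9})$.

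For surjectivity, take any $[P]=(\vec{v}_{1},\ldots,\vec{v}_{k})\in\mathcal{M}_{k}(\mathbb{R}^{9})$. By Lemma \ref{SurjectivOctonions} each $\vec{v}_{i}$ has a preimage $\begin{pmatrix}x_{i}\\ y_{i}\end{pmatrix}\in\mathbb{O}^{2}$ under $\pi\circ\Phi_{\mathbb{O}}$, and the conditions that $[P]$ be closed and of perimeter $1$ translate, after the same calculation as above read backwards, into $XX^{\ast}=I_{2}$, so $X\in V_{\mathbb{O}}(2,k)$. I expect no issue here because the three scalar equations characterizing $V_{\mathbb{O}}(2,k)$ involve only inner products and a single two-octonion product $x_{i}\bar{y}_{i}$, none of which require associativity.

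For the fiber description, suppose $\Phi_{\mathbb{O}}^{k}(X)=\Phi_{\mathbb{O}}^{k}(X')$. Comparing columns and applying Lemma \ref{OctonionFiberLemma} to each index $i$ gives a $c_{i}\in\mathbb{O}(1)$ such that the $i$-th column of $X'$ equals $\begin{pmatrix}x_{i}\\ y_{i}\end{pmatrix}\cdot c_{i}$ in the sense of the twisted $\mathbb{O}(1)$-multiplication introduced for $\mathbb{O}^{2}$ (the twisted form is exactly what is needed so that the columns related by $c_{i}$ have the same $\Phi_{\mathbb{O}}$-image despite non-associativity). Collecting the $c_{i}$ gives a free parameter in $\mathbb{O}(1)^{k}$, except that any column whose image is $(0,0)$ is forced to be $(0,0)$ by the second half of Lemma \ref{SurjectivOctonions}, contributing a trivial fiber. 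Thus the full fiber is $\mathbb{O}(1)^{l}$ where $l$ is the number of non-zero edges of $[P]$. Since $[P]\in\mathcal{M}_{k}(\mathbb{R}^{9})$ is non-zero and closed, at least two edges must be non-zero (if only one were non-zero, closure would force it to vanish), giving $l\geq 2$, and for a non-degenerate polygon all edges are non-zero so $l=k$.

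The only real obstacle is the fiber step: one must make sure that compiling the individual column-actions $c_{i}$ into a single $\mathbb{O}(1)^{k}$-action preserves $XX^{\ast}=I_{2}$. This is guaranteed because the twisted right action of each $c_{i}$ preserves $|x_{i}|^{2}$, $|y_{i}|^{2}$, and $x_{i}\bar{y}_{i}$ (exactly the quantities appearing in $XX^{\ast}$), which was precisely the content of Lemma \ref{OctonionFiberLemma}, so no global associativity is needed and the argument goes through columnwise.
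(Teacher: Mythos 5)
Your proposal is correct and follows essentially the same route as the paper, which simply defers to the argument of Lemma \ref{Phik} with Lemma \ref{SurjectivOctonions} and Lemma \ref{OctonionFiberLemma} replacing their $\mathbb{R},\mathbb{C},\mathbb{H}$ counterparts; your explicit attention to where non-associativity could intervene (only two-factor products appear in $XX^{\ast}$, and the twisted $\mathbb{O}(1)$-action preserves $|x_{i}|^{2},|y_{i}|^{2},x_{i}\bar{y}_{i}$) is exactly the point the paper leaves implicit. The only cosmetic remark is that Lemma \ref{norm} is stated in the paper only for $\mathbb{F}=\mathbb{R},\mathbb{C},\mathbb{H}$, so for the perimeter computation you should instead invoke the normed-algebra identity $|x\bar{y}|=|x||y|$ directly for $\mathbb{O}$.
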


The right multiplication of Moufang Loop $\mathbb{O}(1)$ and the left action
of the special unitary group $SU(2,\mathbb{O})$ on $\mathbb{O}^{2}$ naturally
extend to $V_{\mathbb{O}}(2,k)$. For $c=(c_{1},\cdots,c_{k})\in\mathbb{O}%
(1)^{k}$ and $X=%
\begin{pmatrix}
x_{1} & \cdots & x_{k}\\
y_{1} & \cdots & y_{k}%
\end{pmatrix}
\in V_{\mathbb{O}}(2,k)$, the right action is defined column-wise by\quad\
\[
X\cdot c:=\left(
\begin{pmatrix}
x_{1}\\
y_{1}%
\end{pmatrix}
\cdot c_{1}\text{,}\cdots,%
\begin{pmatrix}
x_{k}\\
y_{k}%
\end{pmatrix}
\cdot c_{k}\right)  .
\]
By applying Lemma \ref{OctonionFiberLemma}, $\mathbb{O}(1)^{k}$-multiplication
yields the following diagram with a well-defined map $\widetilde
{\Phi_{\mathbb{O}}^{k}}$ satisfying $\Phi_{\mathbb{O}}^{k}=\widetilde
{\Phi_{\mathbb{O}}^{k}}\circ\pi$.%

\[%
\begin{matrix}
\Phi_{\mathbb{O}}^{k}: & V_{\mathbb{O}}(2,k) & \longrightarrow &
\mathcal{M}_{k}(V)\\
&  &  & \\
& \pi\big\downarrow & \nearrow & \widetilde{\Phi_{\mathbb{O}}^{k}}\\
&  &  & \\
& V_{\mathbb{O}}(2,k)/\mathbb{O}(1)^{k} &  &
\end{matrix}
,
\]

The special unitary groups $SU(2,\mathbb{O})\simeq\widehat{Spin(9)}$ also acts
on $V_{\mathbb{O}}(2,k)$ by applying its spin action on $\mathbb{O}^{2}$ to
each column. we define
\[
A\cdot X:=\left(  A%
\begin{pmatrix}
x_{1}\\
y_{1}%
\end{pmatrix}
\cdots A%
\begin{pmatrix}
x_{k}\\
y_{k}%
\end{pmatrix}
\right)  =AX
\]
for $A\in SU(2,\mathbb{O})$. By Proposition \ref{OctO1SuProp}, the
$SU(2,\mathbb{O})$-action to $V_{\mathbb{O}}(2,k)\ $induces $SU(2,\mathbb{O}%
)$-action\ to $V_{\mathbb{O}}(2,k)/\mathbb{O}(1)^{k}$. By Corollary
\ref{OctRecCoro} and diagram AO in subsection \ref{subsectionOandGroupaction}%
\thinspace, this action corresponding to the $SO(1+8)\ $action on
$\mathcal{M}_{k}(\mathbb{R}^{1+8})$. Therefore the octonionic version of
Proposition \ref{SO}$~$is established, and we have the commuting diagram
\[%
\begin{matrix}
V_{\mathbb{O}}(2,k)/\mathbb{O}(1)^{k} & \xrightarrow{ \widetilde{\Phi}^k} &
\mathcal{M}_{k}(\mathbb{R}^{1+8})\\
&  & \\
SU(2,\mathbb{O})\ \big\downarrow &  & \big\downarrow\ SO(1+8)\\
&  & \\
SU(2,\mathbb{O})\backslash V_{\mathbb{O}}(2,k)/\mathbb{O}(1)^{k} &
\xrightarrow{\simeq} & \widetilde{\mathcal{P}}_{k}(\mathbb{R}^{1+8})
\end{matrix}
.
\]
Therefrom, we get the following theorem for octonions.

\begin{theorem}
\label{thm2} The $Spin(9)$-action of $SU(2,\mathbb{O})$ induces a
homeomorphisms
\[
\widetilde{\mathcal{P}}_{k}(\mathbb{R}^{9})\simeq SU(2,\mathbb{O})\backslash
V_{\mathbb{O}}(2,k)/\mathbb{O}(1)^{k}%
\]
for polygon spaces $\widetilde{\mathcal{P}}_{k}(\mathbb{R}^{9})$.
\end{theorem}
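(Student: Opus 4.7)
The plan is to establish the theorem by reading the bottom arrow off the commutative diagram displayed immediately before the statement. I would first confirm that the top horizontal map $\widetilde{\Phi_{\mathbb{O}}^{k}}\colon V_{\mathbb{O}}(2,k)/\mathbb{O}(1)^{k}\to \mathcal{M}_{k}(\mathbb{R}^{9})$ is a homeomorphism. Bijectivity is already recorded: surjectivity of $\Phi_{\mathbb{O}}^{k}$ comes from the columnwise surjectivity of $\Phi_{\mathbb{O}}$ (the octonionic analogue of Lemma \ref{Phik}), and injectivity on orbits comes from Lemma \ref{OctonionFiberLemma}, which characterizes the fibers of $\Phi_{\mathbb{O}}$ precisely as the twisted right $\mathbb{O}(1)$-orbits. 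Continuity in both directions is standard: $\widetilde{\Phi_{\mathbb{O}}^{k}}$ is continuous because $\Phi_{\mathbb{O}}^{k}$ is polynomial and the projection to the orbit space is a quotient map, and its inverse is continuous since $V_{\mathbb{O}}(2,k)/\mathbb{O}(1)^{k}$ is compact (as a quotient of a real algebraic compact set) and $\mathcal{M}_{k}(\mathbb{R}^{9})$ is Hausdorff.

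Next I would promote $\widetilde{\Phi_{\mathbb{O}}^{k}}$ to an equivariant homeomorphism. Proposition \ref{OctO1SuProp}, applied column by column, shows that the $\widehat{Spin(9)}$-action on $V_{\mathbb{O}}(2,k)$ preserves the $\mathbb{O}(1)^{k}$-orbits, so it descends to an action on the quotient $V_{\mathbb{O}}(2,k)/\mathbb{O}(1)^{k}$. On the target side, Proposition \ref{OcRefProp} together with Lemma \ref{HopfOctKeyLemma} identifies the induced action of each generator $g(r,u)$ on $\mathbb{R}\oplus\mathbb{O}$ with the linear map $\rho_{\mathbb{O}}(r,u)$; by Lemma \ref{OctRecVerLemma} and Corollary \ref{OctRecCoro}, these generate $SO(\mathbb{R}\oplus\mathbb{O})=SO(9)$ (via the $2$-covering $\Gamma_{\mathbb{O}}$ of Diagram AO). Applied edge by edge, this exhibits the induced action on $\mathcal{M}_{k}(\mathbb{R}^{9})$ as the diagonal $SO(9)$-action that defines $\widetilde{\mathcal{P}}_{k}(\mathbb{R}^{9})$. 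Hence the square commutes with compatible vertical group actions, and taking orbits on both sides yields the claimed bijection, which is again a homeomorphism by the same compact/Hausdorff argument.

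I expect the main technical obstacle to lie in the step that identifies the $\widehat{Spin(9)}$-action on $V_{\mathbb{O}}(2,k)/\mathbb{O}(1)^{k}$ with the $SO(9)$-action on $\mathcal{M}_{k}(\mathbb{R}^{9})$, because one cannot invoke associativity to commute the left $SU(2,\mathbb{O})$-multiplication past the twisted right $\mathbb{O}(1)$-multiplication $(x,y)\cdot c=((xy^{-1})(yc),\,yc)$. The correct strategy is to avoid checking this commutation directly on $V_{\mathbb{O}}(2,k)$ and instead push the comparison through the Hopf map: Proposition \ref{OctO1SuProp} says that $\widehat{Spin(9)}$ sends $\mathbb{O}(1)^{k}$-orbits to $\mathbb{O}(1)^{k}$-orbits, and Lemma \ref{HopfOctKeyLemma} computes the image of such an orbit in $\mathbb{R}\oplus\mathbb{O}$ explicitly as a reflection. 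Thus the well-definedness of the descended action and its identification with an $SO(9)$-action are both read off from the same Hopf-map computation, bypassing the associativity issue. Once this is in place, the bottom arrow of the diagram is, by construction, the homeomorphism asserted in the theorem.
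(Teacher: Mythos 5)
Your proposal is correct and follows essentially the same route as the paper: it assembles the homeomorphism from the octonionic analogue of Lemma \ref{Phik}, Lemma \ref{OctonionFiberLemma} for the fibers, Proposition \ref{OctO1SuProp} for descending the $\widehat{Spin(9)}$-action past the twisted $\mathbb{O}(1)$-multiplication, and Corollary \ref{OctRecCoro} with Diagram AO to identify the induced action with $SO(9)$, exactly as the paper does in the discussion preceding the theorem. Your added remarks on continuity via compactness and the Hausdorff property, and your explicit flagging of the non-associativity issue, only make explicit what the paper leaves implicit.
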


\begin{remark}
Our study has primarily relied on the spin representation theory in low
dimensions. A natural direction for extending this work is to consider
replacing the field $\mathbb{F\ }$with the octonions $\mathbb{O}$, the final
member in the family of normed division algebras. However, unlike the real
numbers, complex numbers, and quaternions, the octonions are not associative.
As a result, standard constructions that work for $\mathbb{F}=\mathbb{R}%
,\mathbb{C},\mathbb{H}$ often fail or require significant modification in the
octonionic setting. Despite this, we have attempted to extend our framework to
the case $\mathbb{F}=\mathbb{O}$, and found that, through adjustments of the
earlier constructions, a similar theory can indeed be formulated considering
$SU(2,\mathbb{O})\ $as $Spin\left(  9\right)  \ $and related spinors. We
remark the extension to $\mathbb{O\ }$can also be considered for $Spin\left(
8\right)  $ to $\mathbb{O}^{2}$ and the triality of $Spin\left(  8\right)
\ $induces various interesting structures on polygon spaces. We discuss this
in another article.
\end{remark}

\textbf{Acknowledgements: }This research was supported by the National
Research Foundation of Korea(NRF) grant funded by the Korea
government(MSIT)(No. RS-2024-00359647). This research was also supported by
Basic Science Research Program (Priority Research Institute) through the
National Research Foundation of Korea (NRF) funded by the Ministry of
Education (2021R1A6A1A10039823).

\newpage

\bigskip

\bigskip

Addresses

Eunjeong Lee (ddwjd92@gmail.com)

Department of Mathematics, Ewha Womans University, Seoul 120-750, Korea

Jae-Hyouk Lee (jaehyoukl@ewha.ac.kr)

Department of Mathematics, Ewha Womans University, Seoul 120-750, Korea

\end{document}